\def\pdf#1#2{\frac{\partial #1}{\partial #2}}
\def\pdff#1#2#3{\frac{\partial^2 #1}{\partial #2\partial #3}}
\def\tangentvector#1{\pdf{}{#1}}
\def\dif{\mathrm{d}}
\newcommand{\p}{\partial}
\newcommand{\al}{\alpha}
\newcommand{\av}{\alpha^\vee}
\newcommand{\lm}{\lambda}
\newcommand{\bfx}{\mathbf{x}}
\DeclareMathOperator{\nd}{d\!}
\numberwithin{equation}{section}
\newtheorem{thm}{Theorem}[section] 
\newtheorem{cor}[thm]{Corollary}
\newtheorem{lem}[thm]{Lemma}
\newtheorem{prop}[thm]{Proposition}
\theoremstyle{definition} 
\newtheorem{defn}[thm]{Definition} 
\newtheorem{ex}{Example}[section]
\theoremstyle{remark} 
\newtheorem{rem}{Remark}[section]
\begin{document}
	
	\title[Affine Weyl Groups and Generalized Frobenius Manifolds \uppercase\expandafter{\romannumeral1}]{Generalized Frobenius Manifold Structures on the Orbit Spaces of Affine Weyl Groups I}
	\author{Lingrui Jiang}
	\email{jlr24@mails.tsinghua.edu.cn}
	\address{Department of Mathematical Sciences, Tsinghua University,
		Beijing 100084, P. R.~China}
	\author{Si-Qi Liu}
	\email{liusq@tsinghua.edu.cn}
	\address{Department of Mathematical Sciences, Tsinghua University,
		Beijing 100084, P. R.~China}
	\author{Yingchao Tian}
	\email{tianyc23@mails.tsinghua.edu.cn}
	\address{Department of Mathematical Sciences, Tsinghua University,
		Beijing 100084, P. R.~China}
	\author{Youjin Zhang}
	\email{youjin@tsinghua.edu.cn}
	\address{Department of Mathematical Sciences, Tsinghua University,
		Beijing 100084, P. R.~China}
	%
	\keywords{Generalized Frobenius manifold, Affine Weyl group, Flat pencil of metrics, Monodromy Group.}
	
	\begin{abstract}
		We present an approach to construct a class of generalized Frobenius manifold structures on the orbit spaces of affine Weyl groups, and prove that their monodromy groups are parabolic subgroups of the associated affine Weyl groups.
	\end{abstract}
	\maketitle
	
	\vspace*{6pt}\tableofcontents  
	
	\section{Introduction}\label{intro}
	The notion of Frobenius manifold was introduced by Dubrovin in the early 1990s in his study on the relation of integrable systems and 2-dimensional topological field theories \cite{Du-fm, 2d-tft}. This geometric structure and its generalizations play a central role in topological field theory, Gromov-Witten theory, singularity theory and in the theory of integrable systems (see \cite{Lorenzoni-1, Lorenzoni-2, Brini-1, Brini-2, Brini-3, 2d-tft, normal-form, fjrw, givental,
		hertling, gfm, manin} and the references therein). 
	
	In this paper, we construct on the orbit spaces of affine Weyl groups a class of generalized Frobenius manifold structures. These generalized structures satisfy all the axioms of Dubrovin's definition of Frobenius manifolds except for the flatness of the unit vector fields. The simplest example in this class corresponds to the affine Weyl group of type $A_1$,
	which naturally appears in the study of Hodge integrals and their remarkable connections to two important discrete integrable systems: the Volterra hierarchy and the $q$-deformed KdV hierarchy\,\textemdash\,both fundamental in soliton theory\cite{ZLQW, gfm}.	
	
	Our approach parallels the construction of a Frobenius manifold structure on the orbit space of a Coxeter group or on a certain extension of an affine Weyl group \cite{2d-tft, DZ1998, DSZZ, SYS, S, zdf1, zdf2}. For these cases, a flat pencil of metrics is obtained by projecting the Euclidean metric (on the space where the group acts) onto the orbit space, based on which the Frobenius manifold structure is constructed. 
	The key to these constructions lies in the linear dependence of both the contravariant components of the metric and its Levi-Civita connection on a distinguished coordinate of the orbit space\,\textemdash\,this special coordinate naturally gives rise to the flat unit vector field of the Frobenius manifold.
	However, for the generalized Frobenius manifolds that we are considering, the unit vector field is not flat, implying the absence of such a special coordinate. To overcome this, we proceed as follows: Given an affine Weyl group $W_a$ acting on an $\ell$-dimensional Euclidean space $V$, we introduce a parameter $\lambda$ into the invariant Fourier polynomials, yielding the coordinates $z^1,\dots, z^\ell$ on the orbit space. The projected contravariant metric 
	$\left(g_\lambda^{\al\beta}(z)\right)$ then depends on $\lambda$. Our approach is to choose the invariant $\lambda$-Fourier polynomials such that $\left(g_\lambda^{\al\beta}(z)\right)$ and the Christoffel symbols of their Levi-Civita connections depend linearly on $\lambda$. We prove in this paper that as long as this linear dependence holds, there exists a corresponding generalized Frobenius manifold structure on a dense open subset of the orbit space.
	
	In the present paper, we detail this construction, study properties of the resulting generalized Frobenius manifolds including their monodromy groups, and provide some explicit examples. Further examples will appear in our subsequent paper \cite{JLTZ-2}. The paper is organized as follows. In Sect.\,\ref{basic settings}, we introduce the invariant $\lambda$-Fourier polynomial ring and study its properties. In Sect.\,\ref{GFM structure}, we construct generalized Frobenius manifold structures on orbit spaces of affine Weyl groups, and summarize the main results in Theorem \ref{main}. In Sect.\,\ref{Polynomial Property and Monodromy Groups}, we analyze properties of the monodromy groups of these generalized Frobenius manifolds. In Sect.\,\ref{examples}, we present some representative examples. In Sect.\,\ref{conclusion}, we conclude the paper with remarks and further directions.
	
	\section{Invariant $\lambda$-Fourier polynomial rings and proper generators}
	\label{basic settings}
	Let us begin with several basic settings.
	\subsection{Affine coordinates}
	
	Let $R$ be an irreducible reduced root system in an $\ell$-dimensional Euclidean space $V$ with inner product $(\cdot\, ,\cdot )$, and $\alpha_1,\dots,\alpha_\ell$ be a fixed basis of simple roots. Denote by $\alpha_1^\vee,\dots, \alpha_\ell^\vee$ the corresponding coroots given by
	\begin{align*}
		\alpha_j^\vee =\frac{2\alpha_j}{(\alpha_j,\alpha_j)},\quad j=1,\dots,\ell,
	\end{align*}
	then
	the fundamental weights $\omega_1,\dots,\omega_\ell \in V$ of $R$ are defined by the relations
	\begin{align*}
		(\omega_i, \alpha_j^\vee) = \delta_{ij},\quad i, j=1,\dots,\ell.
	\end{align*}
	We fix a weight
	\begin{align}
		\label{omega = sum mr omegar}
		\omega = \sum_{j=1}^{\ell} m_j \omega_j,\quad  m_j \in \mathbb Z_{\ge 0}
	\end{align}
	of $R$, and introduce affine coordinates $(x^1,\dots,x^\ell; c)$ on $V$ by
	\begin{align}
		\label{x=c omega+}
		\bfx = c\omega + x^1\alpha_1^\vee + \dots + x^\ell\alpha_\ell^\vee,
	\end{align}
	where $c\in\mathbb{R}$ is a fixed parameter. 
	We will also use the following expressions of vectors in $V$:
	\begin{align}
		\bfx =c\omega +\hat{\bfx}= (x^1 +\theta_1 c)\alpha_1^\vee + \dots + (x^\ell + \theta_\ell c)\alpha_\ell^\vee,
	\end{align}
	with $\hat {\bfx}=x^1\alpha_1^\vee + \dots + x^\ell\alpha_\ell^\vee$, and
	\begin{align}
		\label{weight}
		\theta_j = (\omega_j, \omega),~ j=1,\dots,\ell.
	\end{align}
	
	Denote by $W_a(R)$ the affine Weyl group of $R$. It acts on $V$ by
	affine transformations
	\begin{align}
		\phi\colon\bfx \mapsto \phi_0(\bfx) + \sum_{j=1}^{\ell} k_j\alpha_j^\vee,\quad \phi_0 \in W(R),\  k_1,\dots,k_\ell \in \mathbb Z,
	\end{align}
	where $W = W(R)$ is the Weyl group of $R$ generated by the reflections 
	\begin{align*}
		\sigma_j(\bfx) = \bfx - (\alpha_j^\vee, \bfx)\alpha_j,\quad \bfx \in V,\ j=1,\dots,\ell.
	\end{align*}

	\subsection{The invariant $\lambda$-Fourier polynomial ring and basic generators}
	In this subsection, we are to introduce the  $\lambda$-Fourier polynomial ring $\mathscr A$ and its invariant subring $\mathscr A^W$ w.r.t. the action of the affine Weyl group $W_a(R)$, and define the set of basic generators $\{y^1,\dots, y^\ell\}$. To this end, we first introduce a bigger ring $\mathscr F$, called the Fourier function ring, 
	as follows:
	\begin{align*}
		\mathscr{F} = \mathrm{span}_{\mathbb C}\left\{f(x^1,\dots,x^\ell;c)=\mathrm{e}^{2\pi i(t_0c+t_1x^1+\dots+t_\ell x^\ell)}\mid t_0, t_1,\dots,t_\ell\in \mathbb C\right\}.
	\end{align*}
	For any $f \in \mathscr F$, we can uniquely write it in the form $f(\bfx;c)$. Then we have a $W_a(R)$-action on $\mathscr F$ given by
	\begin{align}
		(\phi^* f)(\bfx; c) = f(\phi(\bfx); c),\quad \phi \in W_a(R).
	\end{align}
	Note that if we write $f$ as a $f(\hat{\bfx}; c)$, then the action is given by
	\begin{align}
		(\phi^* f)(\hat{\bfx}; c) = (\phi^* f)(\bfx-c\omega; c) = f(\phi(\bfx)-c\omega; c),\quad \phi \in W_a(R),
	\end{align}
	which in general does not coincide with $f(\phi(\hat{\bfx});c)$. We introduce a $\mathbb C$-gradation on $\mathscr F$ by the assignment
	\begin{align}
		\deg \mathrm{e}^{2\pi ix^j} = \theta_j,\quad  \deg \mathrm{e}^{2\pi i c} = -1,
	\end{align}
	where $\theta_j$ is defined in \eqref{weight}. Then for any given linear function 
	\begin{equation}\label{zh-11-18-1}
		\psi(\bfx;c) = t_1(x^1+\theta_1c)+\dots+t_\ell (x^\ell+\theta_\ell c) + t_0c,\quad t_1,\dots,t_\ell \in \mathbb C
	\end{equation} and constant $A_0 \in \mathbb C^\times$, we have
	\begin{align}
		\label{deg of mono}
		\deg \left(A_0~ \mathrm{e}^{2\pi i\psi(\bfx;c)}\right)=\psi(\mathbf{0};-1).
	\end{align}
	Here $\mathbf{0}$ is the zero vector in $V$.
	\begin{lem}
		\label{maintain degrees}
		The affine Weyl group $W_a(R)$ acts quasi-homogeneously on $\mathscr F$, i.e., it preserves the degrees of monomials.
	\end{lem}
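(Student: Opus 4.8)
The plan is to reduce the lemma to a single elementary identity among the root data and then to check the two kinds of generators of $W_a(R)$ separately. Since the action on $\mathscr F$ is $\mathbb C$-linear it is enough to show that each monomial is sent to a monomial of the same degree, and since $W_a(R)$ is generated by $W$ together with the translations by the coroot lattice $\bigoplus_j\mathbb Z\alpha_j^\vee$, it is enough to treat these generators. The key preliminary observation is that, by \eqref{linear trans}, the coordinate action of any $\sigma\in W_a(R)$ is an affine transformation of $(x^1,\dots,x^\ell;c)$ fixing the last coordinate $c$; consequently, for a linear function $\psi=t_0c+\sum_jt_jx^j$ the composite $\psi\circ\sigma$ is again affine, so the monomial $m_0\mathrm{e}^{2\pi i\psi}$ goes to $m_0'\mathrm{e}^{2\pi i\psi'}$ with $\psi'$ linear and $m_0'\in\mathbb C^\times$ a constant that, by \eqref{deg of mono}, is irrelevant for degrees. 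Writing $P:=(\theta_1,\dots,\theta_\ell;-1)$, so that $\deg\left(m_0\mathrm{e}^{2\pi i\psi}\right)=\psi(P)$ by \eqref{deg of mono}, the lemma thus reduces to the assertion that the linear part of the coordinate action of every $\sigma\in W_a(R)$ fixes the point $P$.

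For a translation $\bfx\mapsto\bfx+\sum_jk_j\alpha_j^\vee$ the induced coordinate map is $x^j\mapsto x^j+k_j$, $c\mapsto c$, whose linear part is the identity, so $P$ is fixed trivially (the monomial merely acquires the scalar $\mathrm{e}^{2\pi i\sum_jt_jk_j}$). For $\sigma\in W$ I would invoke the identity
\[
\omega=\sum_{j=1}^{\ell}\theta_j\,\alpha_j^\vee ,
\]
which follows at once from \eqref{weight}: if $\omega=\sum_jc_j\alpha_j^\vee$, pairing with $\omega_i$ gives $c_i=(\omega_i,\omega)=\theta_i$. (This is exactly the identity underlying the second displayed expression for $\bfx$.) Since $\sigma$ acts linearly on $V$, using $x^j=(\omega_j,\bfx-c\omega)$ and $\sigma(c\omega+\hat\bfx)=c\omega+\sum_j\tilde x^j\alpha_j^\vee$, a short calculation gives
\[
\tilde x^j=\bigl((\omega_j,\sigma\omega)-\theta_j\bigr)c+\sum_{k=1}^{\ell}(\omega_j,\sigma\alpha_k^\vee)\,x^k .
\]
Evaluating at $P$, i.e. setting $x^k=\theta_k$ and $c=-1$, and using the identity above in the form $\sum_k(\omega_j,\sigma\alpha_k^\vee)\theta_k=(\omega_j,\sigma\omega)$ (by linearity of $\sigma$ and of the inner product), the two $(\omega_j,\sigma\omega)$-terms cancel and one obtains $\tilde x^j|_P=\theta_j$; hence $\sigma$ fixes $P$. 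Reassembling the generators, $\deg(m\circ\sigma)=\deg m$ for every monomial $m$ and every $\sigma\in W_a(R)$.

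I do not anticipate a genuine obstacle here; the only real content is the bookkeeping of how a reflection $\sigma\in W$ acts on the affine coordinates---in particular the $c$-dependent shift produced by the fact that $\sigma\omega\neq\omega$ in general---and the recognition that, once $P$ is identified, the whole statement collapses to the elementary relation $\omega=\sum_j\theta_j\alpha_j^\vee$. An alternative would be to verify the claim directly on the Coxeter generators of $W_a(R)$ (the simple reflections $\sigma_1,\dots,\sigma_\ell$ and the affine reflection attached to the highest root of $R$), but treating a general element as above is shorter and avoids a case analysis.
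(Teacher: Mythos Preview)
Your proof is correct and follows essentially the same approach as the paper: both reduce the statement to showing that the linear part of the coordinate action of each $\sigma\in W_a(R)$ fixes the point $P=(\theta_1,\dots,\theta_\ell;-1)$, handle translations trivially, and for $\sigma\in W$ use the identity $\omega=\sum_j\theta_j\alpha_j^\vee$. The only difference is cosmetic: the paper observes directly that $P$ corresponds to the origin $\mathbf{0}\in V$ (since $(-1)\omega+\sum_j\theta_j\alpha_j^\vee=\mathbf{0}$) and that $\sigma\in W$ fixes $\mathbf{0}$, whereas you carry out the equivalent coordinate computation explicitly.
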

	\begin{proof}
		Let $f(\bfx;c) \in \mathscr F$ be a nonzero monomial, which can be represented in the form 
		\[f(\bfx;c)=A_0 \mathrm{e}^{2\pi i\psi(\bfx;c)}\] 
		with a linear function $\psi$ of the form \eqref{zh-11-18-1} and a nonzero constant $A_0$. On one hand, for any given translation
		\[
		\phi\colon \bfx \mapsto \bfx + \alpha,\quad \alpha=\sum_{r=1}^\ell n_r\alpha_r^\vee
		\]
		of $W_a(R)$, its action on $f$ is given by
		\[
		(\phi^* f)(\bfx; c)= A_0 \mathrm{e}^{2\pi i\psi(\bfx+\alpha;c)}= \mathrm{e}^{2\pi i\psi(\alpha;0)}f(\bfx; c).
		\]
		Therefore, $\phi^* f$ and $f$ merely differ by a nonzero factor, hence the action by $\phi$ preserves the degree.
		
		On the other hand, for any $\phi \in W(R)$ is given by
		\[
		\deg(\phi^* f) = \deg \left(A_0~ \mathrm{e}^{2\pi i\psi(\phi(\bfx); c)}\right) = \psi(\phi(\mathbf{0}); c).
		\]
		Since $\phi$ is linear, we have $\phi(\mathbf{0}) = \mathbf{0}$, hence $\deg(\phi^* f) = \deg f$. The lemma is proved.
	\end{proof}
	
	Denote by $\mathscr F^W$ the invariant subring of $\mathscr F$ w.r.t. the action of $W_a(R)$. We introduce the invariant functions $Y_1,\dots,Y_\ell\in \mathscr F^W$ by 
	\begin{align}\label{zzh-1}
		Y_j(x^1,\dots,x^\ell;c) = \frac 1{N_j}\sum_{\sigma\in W(R)} \mathrm{e}^{2\pi i(\omega_j, \sigma(\bfx))},\quad j=1,\dots,\ell,
	\end{align}
	where $N_j = \#\{\sigma\in W(R)\colon\sigma(\omega_j) = \omega_j\}$ and $\bfx = c\omega + x^1\alpha_1^\vee + \dots + x^\ell\alpha_\ell^\vee$.

	\begin{prop}
		\label{quasi-homogeneous Yj}
		The functions $Y_j$ belong to $\mathscr F^W$ and are quasi-homogeneous of degree $0$.
	\end{prop}
	\begin{proof}
		The invariance of $Y_j$ under the action of $W_a(R)$ follows from their definition. For any $\sigma \in W_a(R)$, we know from Lemma \ref{maintain degrees} that
		\begin{align*}
			\deg \mathrm{e}^{2\pi i(\omega_j, \sigma(\bfx))} = \deg \mathrm{e}^{2\pi i(\omega_j, \bfx)}= \deg \mathrm{e}^{2\pi i(\theta_j c + x^j)} = -\theta_j + \theta_j = 0,
		\end{align*}
		thus we arrive at $\deg Y_j=0$. The proposition is proved.
	\end{proof}

	Let us proceed to introduce the $\lambda$-Fourier polynomial ring $\mathscr A$ and its invariant subring $\mathscr A^W$ w.r.t. the action of $W_a(R)$. Denote
	\begin{align}
		\kappa = \gcd\{(\omega, \alpha_r)~|~r = 1,\dots,\ell\},
	\end{align}
	where $\gcd$ refers to the \textit{greatest common divisor}. This makes sense since
	\begin{align*}
		(\omega, \alpha_r) = m_r(\omega_r, \alpha_r) = m_r\frac{(\alpha_r,\alpha_r)}{2},\quad r=1,\dots, \ell
	\end{align*}
	are nonnegative rational numbers. Let
	\begin{align}\label{zzh-2}
		\lambda = \mathrm{e}^{-2\pi i\kappa c},
	\end{align}
	then we have
	\begin{align}
		\deg \lambda = \kappa.
	\end{align}
	Define the $\lambda$-Fourier polynomial ring $\mathscr A$ as a subring of $\mathscr F$ by
	\begin{align*}
		\mathscr A = \mathbb C[\lambda] \otimes \mathbb C\bigl[\mathrm{e}^{2\pi i x^j}, \mathrm{e}^{-2\pi i x^j}\mid j = 1,\dots,\ell\bigr] \subset\mathscr F.
	\end{align*}
	Explicitly, $\mathscr A$ is composed of elements of the form
	\begin{align*}
		\mathrm{e}^{2\pi i\left(-a_0\kappa c+a_1x^1+\dots+a_\ell x^\ell\right)},\quad a_0 \in \mathbb Z_{\ge 0}, a_1,\dots,a_\ell \in \mathbb Z
	\end{align*}
	and their linear combinations.
	We call the subring of  $\mathscr A$ defined by
	\[\mathscr A^W = \mathscr A \cap \mathscr F^W\] 
	the invariant $\lambda$-Fourier polynomial ring. Then we have 
	$\lambda = \mathrm{e}^{-2\pi i\kappa c}\in \mathscr A^W$. Note that $W_a(R)$ does not act on $\mathscr A$, since results of the action may involve negative power terms of $\lambda$, which are not in $\mathscr A$.
	
	Let
	\begin{align}
		\label{yj definition}
		y^j(\bfx;c) = \mathrm{e}^{-2\pi i\theta_j c}~Y_j(\bfx),~j = 1,\dots,\ell.
	\end{align}
	It follows from Proposition \ref{quasi-homogeneous Yj} that $y^j$ is quasi-homogeneous of degree $\theta_j$.
	
	\begin{lem}
		We have $y^j \in \mathscr A^W$, $j = 1,\dots,\ell$.
	\end{lem}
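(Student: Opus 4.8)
The plan is to verify separately that $y^j$ is invariant under $W_a(R)$ and that it lies in $\mathscr A$; membership in $\mathscr A^W=\mathscr A\cap\mathscr F^W$ then follows at once.

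Invariance is essentially free: by Proposition \ref{homogeneous Yj} we have $Y_j\in\mathscr F^W$, and the prefactor $\mathrm e^{-2\pi i\theta_j c}$ depends only on $c$, which is left unchanged by the action \eqref{linear trans} of every element of $W_a(R)$; hence $y^j=\mathrm e^{-2\pi i\theta_j c}Y_j\in\mathscr F^W$.

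To see $y^j\in\mathscr A$ I would first rewrite $Y_j$ as an orbit sum. Since $W$ acts by orthogonal transformations, $(\omega_j,\sigma(\bfx))=(\sigma^{-1}(\omega_j),\bfx)$, and as $\sigma$ ranges over $W$ the weight $\sigma^{-1}(\omega_j)$ ranges over the orbit $W\omega_j$, each value being attained exactly $N_j$ times; thus, using $(\mu,\bfx)=(\mu,\omega)c+(\mu,\hat{\bfx})$ and $\theta_j=(\omega_j,\omega)$,
\begin{align*}
	Y_j(\bfx)=\sum_{\mu\in W\omega_j}\mathrm e^{2\pi i(\mu,\bfx)},\qquad
	y^j(\bfx)=\sum_{\mu\in W\omega_j}\mathrm e^{2\pi i(\mu-\omega_j,\,\omega)c}\,\mathrm e^{2\pi i(\mu,\hat{\bfx})} .
\end{align*}
Every $\mu\in W\omega_j$ lies in the weight lattice, so $(\mu,\alpha_r^\vee)\in\mathbb Z$ and $\mathrm e^{2\pi i(\mu,\hat{\bfx})}=\prod_{r=1}^\ell\bigl(\mathrm e^{2\pi ix^r}\bigr)^{(\mu,\alpha_r^\vee)}$ is a Laurent monomial in $\mathrm e^{\pm2\pi ix^1},\dots,\mathrm e^{\pm2\pi ix^\ell}$. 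For the $c$-factor I would invoke the standard fact that, $\omega_j$ being dominant, each $\mu\in W\omega_j$ satisfies $\omega_j-\mu=\sum_{r=1}^\ell q_r\alpha_r$ with $q_r\in\mathbb Z_{\ge0}$; then
\[
	(\mu-\omega_j,\omega)=-\sum_{r=1}^\ell q_r(\omega,\alpha_r)=-\kappa\sum_{r=1}^\ell q_r\,\frac{(\omega,\alpha_r)}{\kappa},
\]
and since each $(\omega,\alpha_r)/\kappa$ is a nonnegative integer by the definition of $\kappa$, the exponent equals $-\kappa\,a_0(\mu)$ with $a_0(\mu)\in\mathbb Z_{\ge0}$. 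Hence $\mathrm e^{2\pi i(\mu-\omega_j,\omega)c}=\lambda^{a_0(\mu)}\in\mathbb C[\lambda]$, so each summand of $y^j$ belongs to $\mathscr A$, and therefore $y^j\in\mathscr A^W$.

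I expect the only nontrivial ingredient to be the dominance statement $\omega_j-\mu\in\bigoplus_r\mathbb Z_{\ge0}\alpha_r$ for $\mu\in W\omega_j$; everything else is bookkeeping about the weight and coroot lattices. The key point it delivers is the \emph{positivity} of the exponent of $\lambda$: combined with the positivity of the ratios $(\omega,\alpha_r)/\kappa$ it forces $a_0(\mu)\ge0$, which is exactly what places the summands in $\mathbb C[\lambda]$ rather than merely in $\mathbb C[\lambda,\lambda^{-1}]$.
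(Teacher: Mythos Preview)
Your proof is correct and follows essentially the same route as the paper's: both reduce to checking that each monomial $\mathrm e^{-2\pi i\theta_j c}\mathrm e^{2\pi i(\mu,\bfx)}$ with $\mu\in W\omega_j$ lies in $\mathscr A$, invoke the dominance fact $\omega_j-\mu\in\sum_r\mathbb Z_{\ge0}\alpha_r$ (the paper cites \cite{bourbaki} for this, in the form \eqref{sigma(weight)}), and then use that $(\omega,\alpha_r)/\kappa\in\mathbb Z_{\ge0}$ to get a nonnegative power of $\lambda$, together with integrality of $(\mu,\alpha_r^\vee)$ for the $x$-variables. Your rewriting of $Y_j$ as an orbit sum is a cosmetic variant of the paper's sum over $\sigma\in W$; the substance is identical.
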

	\begin{proof}			
		It is obvious that $y^j$ is $W_a(R)$-invariant, so we only need to check that $y^j \in \mathscr A$. It suffices to show that for any $\sigma \in W(R)$, the monomial
		\begin{align}
			\label{mono in A}
			\mathrm{e}^{-2\pi i \theta_j c}\mathrm{e}^{2\pi i(\omega_j, \sigma(\bfx))}\in \mathscr A.
		\end{align}
		Observe that
		\begin{align}
			-2\pi i\theta_j c + 2\pi i(\omega_j, \sigma(\bfx)) ~=&~ -2\pi ic(\omega_j, \omega) + 2\pi i (\sigma^{-1}(\omega_j), \bfx)\notag\\
			=&~ -2\pi ic(\omega_j - \sigma^{-1}(\omega_j), \omega) + 2\pi i\sum_{r=1}^\ell x^r(\sigma^{-1}(\omega_j), \alpha_r^\vee).
		\end{align}
		Since $\sigma^{-1} \in W(R)$, there exist non-negative integers $n_1,\dots,n_\ell$ such that \cite{bourbaki}
		\begin{align}
			\label{sigma(weight)}
			\sigma^{-1}(\omega_j) = \omega_j - \sum_{s=1}^{\ell}n_s\alpha_s,
		\end{align}
		from which it follows that
		\begin{align}
			(\omega_j - \sigma^{-1}(\omega_j), \omega) =\sum_{s=1}^\ell n_s\left(\alpha_s,\omega\right)=m \kappa,
		\end{align}
		where $m$ is a non-negative integer. We also have
		\begin{align}
			\label{too long to find a name}
			\left(\sigma^{-1}(\omega_j), \alpha_r^\vee\right) = (\omega_j, \alpha_r^\vee) - \sum_{s=1}^\ell n_s(\alpha_s, \alpha_r^\vee) = \delta_{jr} - \sum_{s=1}^{\ell}n_s c_{sr},
		\end{align}
		where $c_{sr}$ are the $(s,r)$-entries of the Cartan matrix of the root system $R$. Therefore, there exists $a_0 \in \mathbb Z_{\ge 0}$ and $a_1,\dots a_\ell \in \mathbb Z$ such that
		\begin{align*}
			\mathrm{e}^{-2\pi i \theta_j c}\mathrm{e}^{2\pi i(\omega_j, \sigma(\bfx))}= \mathrm{e}^{2\pi i (-a_0\kappa c+a_1x^1+\dots+a_\ell x^\ell)}\in \mathscr A.
		\end{align*}
		The lemma is proved.	
	\end{proof}
	
	\begin{thm}
		\label{poly generators}
		We have $\mathscr A^W = \mathbb C[y^1,\dots,y^\ell; \lambda]$.
	\end{thm}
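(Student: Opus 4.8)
The plan is as follows. The inclusion $\mathbb C[y^1,\dots,y^\ell;\lambda]\subseteq\mathscr A^W$ is already available, since $\lambda\in\mathscr A^W$ and $y^j\in\mathscr A^W$ by the preceding lemma, so only the reverse inclusion needs work. The first point I would record is that the translation subgroup of $W_a(R)$ acts trivially on $\mathscr A=\mathbb C[\lambda]\otimes\mathbb C\bigl[\mathrm{e}^{\pm 2\pi ix^j}\bigr]$ — a translation only multiplies a monomial $\mathrm{e}^{2\pi i(\mu,\hat{\bfx})}$ (with $\mu$ in the weight lattice $P$) by $\mathrm{e}^{2\pi i(\text{integer})}=1$ — so that for elements of $\mathscr A$, $W_a(R)$-invariance coincides with invariance under the finite Weyl group $W$; and $W$ fixes $c$, hence fixes $\lambda$, and sends $\mathrm{e}^{2\pi i(\mu,\bfx)}$ to $\mathrm{e}^{2\pi i(\sigma^{-1}\mu,\bfx)}$.

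\textbf{Step 1 (the shape of an invariant).} I would write a general element of $\mathscr A$ as $f=\sum_{\mu\in P}\tilde P_\mu(\lambda)\,\mathrm{e}^{2\pi i(\mu,\hat{\bfx})}$ with $\tilde P_\mu\in\mathbb C[\lambda]$ (a finite sum), and re-express it inside $\mathscr F$ via $\mathrm{e}^{2\pi i(\mu,\hat{\bfx})}=\mathrm{e}^{-2\pi i(\mu,\omega)c}\,\mathrm{e}^{2\pi i(\mu,\bfx)}$ and $\lambda=\mathrm{e}^{-2\pi i\kappa c}$. Imposing $\sigma f=f$ and comparing the coefficients of the characters $\mathrm{e}^{2\pi i(\nu,\bfx)}$, $\nu\in P$ (which are linearly independent over the functions of $c$), I get $\tilde P_{\sigma\nu}(\lambda)=\mathrm{e}^{2\pi i(\sigma\nu-\nu,\omega)c}\,\tilde P_\nu(\lambda)$; since $\sigma\nu-\nu$ lies in the root lattice and $(\alpha_r,\omega)\in\kappa\mathbb Z$ for every $r$ (precisely the reason $\kappa=\gcd\{(\omega,\alpha_r)\}$ was introduced), this becomes
\[
\tilde P_{\sigma\nu}(\lambda)=\lambda^{(\nu-\sigma\nu,\omega)/\kappa}\,\tilde P_\nu(\lambda),\qquad \sigma\in W,\ \nu\in P.
\]
When the reference weight is dominant, $\nu=\nu^+\in P^+$, the exponent $(\nu^+-\sigma\nu^+,\omega)/\kappa$ is a non-negative integer because $\nu^+-\sigma\nu^+$ is a non-negative integer combination of simple roots; so on each $W$-orbit the coefficients are all determined by the one at the dominant representative, and one arrives at
\[
\mathscr A^W=\bigoplus_{\nu^+\in P^+}\mathbb C[\lambda]\,y_{\nu^+},\qquad
y_{\nu^+}:=\mathrm{e}^{-2\pi i(\nu^+,\omega)c}\sum_{\mu\in W\nu^+}\mathrm{e}^{2\pi i(\mu,\bfx)},
\]
the sum being direct because distinct orbits involve disjoint monomials. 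Here each $y_{\nu^+}$ does lie in $\mathscr A^W$ — its $W$-invariance is immediate, and the powers of $\lambda$ that occur, namely $(\nu^+-\mu,\omega)/\kappa$, are non-negative integers — and comparison with \eqref{zzh-1} and \eqref{yj definition} gives $y_{\omega_j}=y^j$.

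\textbf{Step 2 (reduction to the fundamental invariants).} It remains to show $y_{\nu^+}\in\mathbb C[\lambda][y^1,\dots,y^\ell]$ for every $\nu^+\in P^+$, which I would prove by induction on $\nu^+$ in the dominance partial order — well founded on $P^+$, as only finitely many dominant weights lie below a fixed one. Writing $\nu^+=\sum_{j=1}^\ell a_j\omega_j$ with $a_j\in\mathbb Z_{\ge 0}$, the expansion of $\prod_{j}(y^j)^{a_j}\in\mathscr A^W$ has a unique term of top weight, $\mathrm{e}^{2\pi i(\nu^+,\hat{\bfx})}$ with coefficient $1$, every other monomial $\lambda^k\mathrm{e}^{2\pi i(\mu,\hat{\bfx})}$ having $\mu\prec\nu^+$. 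Writing this product as $\sum_{\rho^+}Q_{\rho^+}(\lambda)\,y_{\rho^+}$ according to Step 1 then forces $Q_{\nu^+}=1$ and $Q_{\rho^+}=0$ unless $\rho^+\preceq\nu^+$, so that
\[
y_{\nu^+}=\prod_{j=1}^\ell(y^j)^{a_j}-\sum_{\rho^+\prec\nu^+}Q_{\rho^+}(\lambda)\,y_{\rho^+}\in\mathbb C[\lambda][y^1,\dots,y^\ell]
\]
by the induction hypothesis. This gives $\mathscr A^W=\mathbb C[\lambda][y^1,\dots,y^\ell]=\mathbb C[y^1,\dots,y^\ell;\lambda]$; and the algebraic independence of $y^1,\dots,y^\ell$ over $\mathbb C[\lambda]$ (so that this is a genuine polynomial ring) follows by setting $\lambda=0$ and invoking Corollary \ref{functional independence}.

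I expect the technical core to be Step 1 — correctly accounting for the powers of $\lambda$ that invariance forces (pinning the exponent down to exactly $(\nu-\sigma\nu,\omega)/\kappa$, and recognising it as a non-negative integer once one passes to the dominant representative of the orbit), and checking that the twisted orbit sums $y_{\nu^+}$ really lie in $\mathscr A$, not merely in the larger ring $\mathbb C[\lambda^{\pm 1}]\otimes\mathbb C[\mathrm{e}^{\pm 2\pi ix^j}]$, and specialise to the $y^j$ at the fundamental weights. Given Step 1, Step 2 is the anticipated highest-weight induction, i.e. the $\lambda$-twisted analogue of the classical description of the ring of $W_a(R)$-invariant exponential polynomials.
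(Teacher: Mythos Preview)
Your argument is correct. Step~1 cleanly identifies $\mathscr A^W$ as the free $\mathbb C[\lambda]$-module on the twisted orbit sums $y_{\nu^+}$, and Step~2 is the standard highest-weight induction; the key point you need there---that every $\rho^+$ with $Q_{\rho^+}\neq 0$ satisfies $\rho^+\preceq\nu^+$---holds because $\rho^+$ itself must occur as a weight of $\prod_j(y^j)^{a_j}$, and all such weights are $\preceq\nu^+$.

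The paper takes a rather different route. It reduces to homogeneous $f$ (via Lemma~\ref{maintain degrees}), specializes to $\lambda=1$, invokes the classical Bourbaki theorem to write $f|_{\lambda=1}$ as a polynomial in the $y^j|_{\lambda=1}$, and then restores the $\lambda$-dependence by homogeneity to get $f=\sum_{j\in\mathbb Z}\lambda^j P_j(y^1,\dots,y^\ell)$ with a priori possibly negative $j$. The negative powers are then ruled out by specializing to $\lambda=0$ and using Corollary~\ref{functional independence}: if $q<0$ were minimal with $P_q\neq 0$, then $P_q(y^1|_{\lambda=0},\dots,y^\ell|_{\lambda=0})=0$, contradicting functional independence. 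So the paper's proof is a two-point specialization trick ($\lambda=1$ for existence, $\lambda=0$ for polynomiality in $\lambda$) leaning on the grading and on Corollary~\ref{functional independence} as essential ingredients, whereas your proof is a direct $\lambda$-twisted rerun of the Bourbaki argument that needs neither the grading nor the null-Jacobian result for the inclusion itself. Your approach gives more structural information (the explicit $\mathbb C[\lambda]$-basis $\{y_{\nu^+}\}$); the paper's is shorter given the tools already in place.
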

	A proof of this theorem will be given in the next subsection. 
	We call $\{y^1,\dots,y^\ell\}$ the set of basic generators of $\mathscr A^W$.
	
	\begin{ex}[$(R,\omega) = (A_2,\omega_2)$]
		\label{ex-A_2}
		Let $e_1, e_2, e_3$ be an the orthonormal basis of the Euclidean space $\mathbb{R}^3$, and $V$ be the hyperplane spanned by
		\[\al_1=e_1-e_2,\quad \al_2=e_2-e_3.\]
		Then we can take $\al_1, \al_2$ as a basis of simple roots for the root system $R$ of type $A_2$ in $V$.
		We have
		\[\av_1=\al_1,\quad \av_2=\al_2,\quad
		\omega_1=\frac23\av_1+\frac13\av_2,\quad \omega_2=\frac13\av_1+\frac23\av_2.\]
		Let us take $\omega=\omega_2$, then the affine coordinates $(x^1, x^2; c)$ on $V$ are given by
		\[\mathbf{x}=c\omega_2+x^1 \av_1+x^2 \av_2,\quad \mathbf{x}\in V.\]
		From the definition \eqref{zzh-1} of $Y_j$, we have
		\begin{align*}
			Y_1&=\mathrm{e}^{2\pi i \left(x^1+\frac13 c\right)}+\mathrm{e}^{2\pi i \left(x^2-x^1+\frac13 c\right)}+ \mathrm{e}^{-2\pi i \left(x^2+\frac23 c\right)},\\
			Y_2&=\mathrm{e}^{2\pi i \left(x^2+\frac23c\right)}+ \mathrm{e}^{-2\pi i \left(x^1+\frac13c\right)}+\mathrm{e}^{2\pi i \left(x^1-x^2-\frac13c\right)}.
		\end{align*}
		Since $\kappa = 1$, we have $\lambda = \mathrm{e}^{-2\pi ic}$, and
		the basic generators are given by
		\begin{align*}
			y^1&=e^{-\frac23\pi ic} Y_1=\mathrm{e}^{2\pi i x^1}+\mathrm{e}^{2\pi i (x^2-x^1)}+\lambda \mathrm{e}^{-2\pi i x^2},\\
			y^2&=e^{-\frac43\pi ic} Y_2=\mathrm{e}^{2\pi i x^2}+\lambda \mathrm{e}^{-2\pi i x^1}+\lambda \mathrm{e}^{2\pi i (x^1-x^2)}.
		\end{align*}
	\end{ex}
	
	\subsection{The $\varphi_\lambda$-transformation and the null-transformation}
	The parameter $\lambda$ of the $\lambda$-Fourier polynomials $y^1,\dots,y^\ell$ is defined by \eqref{zzh-2}, so it lies on the unit circle. However, since $y^j$ depend polynomially on $\lambda$, the definition of $y^j$ can be extended naturally to any $\lambda \in \mathbb C$, including $\lambda = 0$ in particular.
	\begin{defn}
		\label{varphi lambda}
		For any fixed $\lambda \in\mathbb C$, the basic generators $y^1,\dots,y^\ell$ yield a transformation 
		\[\varphi_\lambda\colon (x^1,\dots,x^\ell)\mapsto (y^1,\dots,y^\ell).\] 
		We call it the $\varphi_\lambda$-transformation of $(y^1,\dots,y^\ell)$. In particular, the $\varphi_0$ transformation is called the null-transformation.
	\end{defn}
	
	\begin{ex}[$(R,\omega) = (A_2,\omega_2)$]
		From the Example \ref{ex-A_2}, we know that the $\varphi_1$-transformation of $(y^1, y^2)$ is given by
		\begin{align*}
			y^1\big|_{\lambda=1}&=\mathrm{e}^{2\pi i x^1}+\mathrm{e}^{2\pi i (x^2-x^1)}+ \mathrm{e}^{-2\pi i x^2},\\
			y^2\big|_{\lambda=1}&=\mathrm{e}^{2\pi i x_2}+ \mathrm{e}^{-2\pi i x^1}+ \mathrm{e}^{2\pi i (x^1-x^2)}.
		\end{align*}
		The corresponding null-transformation is given by
		\begin{align*}
			y^1\big|_{\lambda=0}&=\mathrm{e}^{2\pi i x^1}+\mathrm{e}^{2\pi i (x^2-x^1)},\\
			y^2\big|_{\lambda=0}&=\mathrm{e}^{2\pi i x^2}.
		\end{align*}
		Note that $y^2$ has only one term $\mathrm{e}^{2\pi i x^2}$, this result will be generalized in Theorem \ref{only leading term}.
	\end{ex}
	
	\begin{prop}\label{zh-2026-7-6-1}
		The Jacobian determinant of the null-transformation is given by
		\begin{align}
			\label{determinant of null trans}
			\mathbf{J}_0(\hat{\bfx}) = C\cdot \mathrm{e}^{\pi i\sum_{\alpha \in R^+}(\alpha, \hat{\bfx})}\prod_{\beta\in R^+,\,(\beta,\omega)=0}\left(1-\mathrm{e}^{-2\pi i(\beta, \hat{\bfx})}\right),
		\end{align}
		where $R^+$ is the set of all positive roots of $R$ and $C$ is a nonzero constant.
	\end{prop}
	\begin{proof}
		From  \cite{bourbaki} and  \cite{DZ1998} we know that the Jacobian determinant of $\varphi_\lambda$ can be represented in the form
		\begin{align}
			\mathbf{J}(\hat{\bfx};\lambda) ~=&~ C\cdot\mathrm{e}^{-2\pi i c\sum_j(\omega_j,\omega) + \pi i\sum_{\alpha \in R^+}\left(c(\alpha, \omega)+(\alpha, \hat{\bfx})\right)}\prod_{\beta\in R^+}\left(1-\mathrm{e}^{-2\pi i\left(c(\beta, \omega)+(\beta, \hat{\bfx})\right)}\right)\notag\\
			\label{determinant of phi lambda}
			=&~C\cdot\mathrm{e}^{ \pi i\sum_{\alpha \in R^+}(\alpha, \hat{\bfx})}\prod_{\beta\in R^+}\left(1-\mathrm{e}^{-2\pi i\left(c(\beta, \omega)+(\beta, \hat{\bfx})\right)}\right).
		\end{align}
		Here we use the fact that \cite{bourbaki}
		\begin{align}
			\frac 12\sum_{\alpha\in R^+}\alpha = \sum_{j=1}^\ell\omega_j.
		\end{align}
		Since $\mathbf{J}(\hat{\bfx};\lambda)$ is a polynomial of $\lambda$, we have $\mathbf J_0(\hat{\bfx}) = \mathbf J(\hat{\bfx}; 0)$, so the constant term of $\mathbf{J}(\hat{\bfx};\lambda)$ w.r.t. $\lambda$ yields \eqref{determinant of null trans}. The proposition is proved.
	\end{proof}
	
	\begin{cor}
		\label{functional independence}
		The Jacobian determinant of the null-transformation is non-vanishing except on an $(\ell-1)$-dimensional divisor. In particular, $y^1\big|_{\lambda = 0}, \dots,y^\ell\big|_{\lambda = 0}$ are functionally independent.
	\end{cor}
	
	We now turn to the proof of Theorem \ref{poly generators}.
	
	\begin{proof}[Proof of Theorem \ref{poly generators}]
		For any given $f \in \mathscr A^W$, we need to show that it can be represented in terms of a polynomial of $y^1,\dots, y^\ell$ and $\lambda$. By using Lemma \ref{maintain degrees}, we know that $f$ can be decomposed as the sum of quasi-homogeneous elements of $\mathscr A^W$, 	so we may assume without lose of generality that $f$ is quasi-homogeneous of degree $n$.
		
		From \cite{bourbaki} we know that the Fourier polynomial $f|_{\lambda=1}$ of $x^1, \dots, x^\ell$ can be represented in terms of a polynomial of $y^1\big|_{\lambda=1}$,\dots,$y^\ell\big|_{\lambda=1}$ as follows:
		\begin{align}
			f|_{\lambda = 1} = \sum_{\beta_1,\dots,\beta_\ell  \ge 0} a_{\beta_1,\dots,\beta_\ell } (y^1\big|_{\lambda = 1})^{\beta_1}\dots (y^\ell\big|_{\lambda = 1})^{\beta_\ell},
		\end{align}
		so it follows from the quasi-homogeneity of $f$ and of $y^1,\dots, y^\ell$ that
		\begin{align}
			\label{raw f}
			f = \sum_{\beta_1,\dots,\beta_\ell  \ge 0} a_{\beta_1,\dots,\beta_\ell}\lambda^{n-\theta_1\beta_1 - \dots - \theta_\ell\beta_\ell} (y^1)^{\beta_1}\cdots (y^\ell)^{\beta_\ell},
		\end{align}
		where each $n-\theta_1\beta_1 - \dots - \theta_\ell\beta_\ell\in\mathbb{Z}$. Hence we can represent $f$ in the form
		\begin{align}
			\label{medium f new}
			f = \sum_{j\in Z} \lambda^j P_j(y^1,\dots,y^\ell),
		\end{align}
		where $P_j$ are polynomials. 
		Let $q$ be the smallest integer such that $P_q=0$. If $q<0$, then by viewing $f$ and $y^1,\dots,y^\ell$ as functions of $x^1,\dots,x^\ell$ and $\lambda$, and by comparing the $\lambda^q$ term in \eqref{medium f new} we arrive at
		\begin{align}
			P_q(y^1\big|_{\lambda = 0}, \dots,y^\ell\big|_{\lambda = 0}) =0.
		\end{align}
		This leads to a contradiction with the fact, as shown by Corollary \ref{functional independence}, that $y^1\big|_{\lambda = 0}, \dots,y^\ell \big|_{\lambda = 0}$ are functionally independent. Thus $q$ must be a non-negative integer, and the theorem is proved.
	\end{proof}

	\subsection{Leading terms of the basic generators}
	From the definition \eqref{yj definition} of $y^j$, we know that it contains a term
	\begin{align}
		\label{leading term}
		\mathrm{e}^{-2\pi i\theta_j c}\mathrm{e}^{2\pi i(\omega_j, {\bfx})} = \mathrm{e}^{2\pi ix^j},
	\end{align}
	which we call the leading term of $y^j$. Since not involving $\lambda$, this term also appears in $y^j|_{\lambda = 0}$. The following theorem shows that for some indices $j$, $y^j|_{\lambda = 0}$ consist merely of their leading terms.
	
	\begin{thm}
		\label{only leading term}
		Denote
		\begin{equation}\label{zzh-4}
			S=\{r\in\{1,\dots,\ell\}\colon m_r=(\omega, \alpha_r^\vee) > 0\},
		\end{equation}
		then we have
		\begin{align}
			y^j\big|_{\lambda = 0}= \mathrm{e}^{2\pi ix^j},\quad j\in S.
		\end{align}
	\end{thm}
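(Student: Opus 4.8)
The plan is to track which monomials in the explicit expansion of $y^j$ survive the specialization $\lambda = 0$, reusing the computation already performed in the proof that $y^j\in\mathscr A^W$. Recall that by \eqref{yj definition} and \eqref{zzh-1},
\[
y^j=\frac1{N_j}\sum_{\sigma\in W}\mathrm{e}^{-2\pi i\theta_j c}\,\mathrm{e}^{2\pi i(\omega_j,\sigma(\bfx))},
\]
and that for each $\sigma\in W$ the computation preceding \eqref{mono in A} shows the $\sigma$-th summand equals $\frac1{N_j}\lambda^{m(\sigma)}$ times a Fourier monomial in $\mathrm{e}^{2\pi ix^1},\dots,\mathrm{e}^{2\pi ix^\ell}$ only, where
\[
m(\sigma)=\frac{(\omega_j-\sigma^*(\omega_j),\omega)}{\kappa}\in\mathbb Z_{\ge0},\qquad (\omega_j,\sigma(\bfx))=(\sigma^*(\omega_j),\bfx).
\]
Consequently $y^j\big|_{\lambda=0}$ is $\frac1{N_j}$ times the sum of the summands with $m(\sigma)=0$, i.e. with $(\omega_j-\sigma^*(\omega_j),\omega)=0$, so the whole statement reduces to describing which $\sigma\in W$ satisfy this vanishing.

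Next I would unwind the condition $(\omega_j-\sigma^*(\omega_j),\omega)=0$. Writing $\sigma^*(\omega_j)=\omega_j-\sum_{s=1}^\ell n_s\alpha_s$ with $n_s\in\mathbb Z_{\ge0}$ as in \eqref{sigma(weight)}, we get
\[
(\omega_j-\sigma^*(\omega_j),\omega)=\sum_{s=1}^\ell n_s(\alpha_s,\omega)=\sum_{s=1}^\ell n_s m_s\frac{(\alpha_s,\alpha_s)}{2},
\]
a sum of non-negative terms. Hence it vanishes if and only if $n_s=0$ for every index $s$ with $m_s>0$, that is, for every $s\in S$. Since we assume $j\in S$, this gives in particular $n_j=0$.

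The heart of the argument is the following orthogonality/Pythagoras observation, which I expect to be the only non-bookkeeping step. Set $\beta=\omega_j-\sigma^*(\omega_j)=\sum_s n_s\alpha_s$. From $(\alpha_s,\omega_j)=\frac{(\alpha_s,\alpha_s)}{2}\delta_{sj}$ and $n_j=0$ we obtain $(\beta,\omega_j)=n_j\frac{(\alpha_j,\alpha_j)}{2}=0$, i.e. $\beta\perp\omega_j$. On the other hand $\sigma^*\in W(R)$ acts on $V$ as an orthogonal transformation, so $\|\sigma^*(\omega_j)\|=\|\omega_j\|$; expanding $\|\omega_j-\beta\|^2=\|\omega_j\|^2$ and using $\beta\perp\omega_j$ yields $\|\beta\|^2=0$, whence $\beta=0$ and $\sigma^*(\omega_j)=\omega_j$. (This is exactly where the hypothesis $j\in S$ is used: for $j\notin S$ the coefficient $n_j$ need not vanish, which is why, e.g., $y^1\big|_{\lambda=0}$ in Example \ref{ex-A_2} has two terms.)

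Finally I would assemble the pieces. The $\sigma\in W$ contributing to $y^j\big|_{\lambda=0}$ are precisely those with $\sigma^*(\omega_j)=\omega_j$; for such $\sigma$ one has $(\omega_j,\sigma(\bfx))=(\sigma^*(\omega_j),\bfx)=(\omega_j,\bfx)$, so by \eqref{leading term} the corresponding monomial is $\mathrm{e}^{-2\pi i\theta_j c}\mathrm{e}^{2\pi i(\omega_j,\bfx)}=\mathrm{e}^{2\pi ix^j}$. Since $\sigma\mapsto\sigma^*$ is a bijection of $W$ and $\{\sigma\in W:\sigma^*(\omega_j)=\omega_j\}$ has the same cardinality $N_j$ as $\{\sigma\in W:\sigma(\omega_j)=\omega_j\}$, these contributions total $\frac1{N_j}\cdot N_j\cdot\mathrm{e}^{2\pi ix^j}=\mathrm{e}^{2\pi ix^j}$, with no cancellation possible as every such term has positive coefficient. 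This proves $y^j\big|_{\lambda=0}=\mathrm{e}^{2\pi ix^j}$ for $j\in S$.
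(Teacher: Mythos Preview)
Your proof is correct and follows essentially the same line as the paper's own argument: both identify the surviving terms as those with $(\omega_j-\sigma^*(\omega_j),\omega)=0$, use the non-negative decomposition $\omega_j-\sigma^*(\omega_j)=\sum_s n_s\alpha_s$ to force $n_j=0$, and then exploit orthogonality of $\sigma^*$ together with $(\omega_j-\sigma^*(\omega_j))\perp\omega_j$ to conclude $\sigma^*(\omega_j)=\omega_j$. The only cosmetic difference is that the paper phrases the last step as the equality case $(\sigma^*(\omega_j),\omega_j)=(\omega_j,\omega_j)$ with equal norms, while you expand $\|\omega_j-\beta\|^2$ directly; these are the same computation.
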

	\begin{proof}
		For any given $j \in S$, from \eqref{zzh-1} and \eqref{yj definition} we obtain
		\begin{align}
			\label{phi 0}
			y^j\big|_{\lambda=0} =\frac 1{N_j}\sum_{\substack{\sigma \in W(R)\\(\omega_j- \sigma^{-1}(\omega_j) ,\omega) = 0}} \mathrm{e}^{2\pi i(\sigma^{-1}(\omega_j), \hat{\bfx})}.
		\end{align}
		Assume that $\sigma \in W$ satisfies the condition
		\begin{align}
			\label{omega,omega=0}
			(\omega_j - \sigma^{-1}(\omega_j), \omega) = 0,
		\end{align}
		then from \eqref{sigma(weight)} it follows that
		\[
		\sum_{r=1}^\ell n_rm_r(\alpha_r, \omega_r) = 0.
		\]
		By using the fact that all terms on the left hand side of the above equality are nonnegative and $m_j>0$, we obtain $n_j=0$, and
		\begin{align}
			\label{inner prod invariant}
			(\sigma^{-1}(\omega_j), \omega_j) = (\omega_j, \omega_j) - \sum_{r=1}^\ell n_r(\alpha_r, \omega_j) = (\omega_j, \omega_j).
		\end{align}
		Since $\sigma^{-1}$ is an orthogonal transformation, the equation \eqref{inner prod invariant} implies that $\sigma^{-1}(\omega_j) = \omega_j$. Therefore, from \eqref{phi 0} it follows that
		\begin{align}
			y^j\big|_{\lambda=0} =\frac 1{N_j}\sum_{\substack{\sigma \in W(R)\\\omega_j = \sigma^{-1}(\omega_j)}} \mathrm{e}^{2\pi i(\omega_j, \hat{\bfx})} =\frac 1{N_j}\sum_{\substack{\sigma \in W(R)\\\omega_j = \sigma^{-1}(\omega_j)}} \mathrm{e}^{2\pi ix^j} = \mathrm{e}^{2\pi ix^j}.
		\end{align}
		The theorem is proved.
	\end{proof}
	
	\begin{cor}
		The following identity holds true:
		\begin{align}
			\label{sum mr logyr = 2pi(omega,x)}
			\sum_{r=1}^\ell m_r\log\left( y^r\big|_{\lambda=0}\right) = 2\pi i(\omega, \hat{\bfx}).
		\end{align}
	\end{cor}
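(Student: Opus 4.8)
The plan is to reduce the sum to the indices in $S$ and then invoke Theorem \ref{only leading term}. First I would observe that the summand for an index $r$ with $m_r = 0$ contributes nothing, so that
\[
\sum_{r=1}^\ell m_r\log\left(y^r\big|_{\lambda=0}\right) = \sum_{r\in S} m_r\log\left(y^r\big|_{\lambda=0}\right).
\]
For each $r\in S$, Theorem \ref{only leading term} gives $y^r\big|_{\lambda=0} = \mathrm{e}^{2\pi i x^r}$, so choosing the branch of $\log$ that returns the exponent of the leading term we get $\log\left(y^r\big|_{\lambda=0}\right) = 2\pi i x^r$. Hence the left-hand side equals $2\pi i\sum_{r\in S} m_r x^r = 2\pi i\sum_{r=1}^\ell m_r x^r$, again using $m_r=0$ for $r\notin S$.

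It then remains to identify $\sum_{r=1}^\ell m_r x^r$ with $(\omega,\hat{\bfx})$. Writing $\hat{\bfx} = x^1\alpha_1^\vee + \dots + x^\ell\alpha_\ell^\vee$ and using the defining relations $(\omega_i,\alpha_j^\vee) = \delta_{ij}$ together with $\omega = \sum_{i} m_i\omega_i$, we obtain $(\omega,\alpha_j^\vee) = m_j$ (this is exactly the identity $m_r = (\omega,\alpha_r^\vee)$ recorded in \eqref{zzh-4}). Therefore
\[
(\omega,\hat{\bfx}) = \sum_{j=1}^\ell x^j(\omega,\alpha_j^\vee) = \sum_{j=1}^\ell m_j x^j,
\]
and combining with the previous paragraph yields $\sum_{r=1}^\ell m_r\log\left(y^r\big|_{\lambda=0}\right) = 2\pi i(\omega,\hat{\bfx})$.

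There is essentially no real obstacle: the argument is a direct consequence of Theorem \ref{only leading term} and the definitions. The only point deserving a short remark is the choice of branch of the logarithm, namely that $\log\left(y^r\big|_{\lambda=0}\right)$ is read as the exponent $2\pi i x^r$ of the (sole) leading term, so that the stated equality holds as an identity of functions on $V$ rather than merely modulo $2\pi i\mathbb{Z}$; this matches the convention under which $\log y^r$ is used elsewhere in the paper.
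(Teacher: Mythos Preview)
Your argument is correct and is exactly the intended one: the paper states this result as an immediate corollary of Theorem \ref{only leading term} without giving a proof, and your computation—dropping the $m_r=0$ terms, applying $y^r\big|_{\lambda=0}=\mathrm{e}^{2\pi i x^r}$ for $r\in S$, and then using $(\omega,\alpha_j^\vee)=m_j$—is precisely the derivation the reader is expected to supply.
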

	
	\subsection{Proper factors and proper generators}
	
	In order to construct generalized Frobenius manifold structures on the orbit space of $W_a(R)$, we introduce in this subsection the notion proper generators and proper factors of $\mathscr A^W$.
	
	Consider the subalgebra 
	$\mathscr R = \mathbb C[y^1,\dots,y^\ell]$
	of $\mathscr A^W$. It has the following two properties:
	\begin{itemize}
		\item $\mathscr R$ is a graded subring of $\mathscr A$. This means that for any $f \in \mathscr R$, if we decompose it into quasi-homogeneous components as $f_1+\cdots+f_n$, then each $f_k \in \mathscr R$. This property follows from the quasi-homogeneity of $y^1,\dots,y^\ell$.
		\item The tensor product decomposition $\mathscr A^W = \mathscr R \otimes\mathbb C[\lambda]$ holds. This means that the natural homomorphism \[h\colon\mathscr R \otimes\mathbb C[\lambda] \to \mathscr A^W,\quad f\otimes g \mapsto fg, \quad f \in \mathscr R, \, g \in \mathbb C[\lambda]\]
		is an isomorphism.
	\end{itemize}
	These properties give rise to the following definition.
	\begin{defn}
		A subalgebra $\mathscr P$ of $\mathscr A^W$ is called a proper factor, if it is graded and $\mathscr A^W = \mathscr P\otimes\mathbb C[\lambda]$.
	\end{defn}
	In what follows, we are to find a set of generators for each proper factor $\mathscr P$, called proper generators.
	
	\begin{defn}\label{zzh-6}
		Suppose $z^1,\dots,z^\ell \in \mathscr A^W$. We say that $\{z^1,\dots,z^\ell\}$ is a set of proper generators of $\mathscr A^W$ if $\deg z^j = \theta_j$ and
		\begin{align}
			\label{proper gen defn}
			z^j\big|_{\lambda=0}=y^j\big|_{\lambda=0},\quad j=1,\dots,\ell.
		\end{align}
	\end{defn}
	
	\begin{lem}
		\label{sj is poly of yj and lambda}
		Let $z^1,\dots,z^\ell \in \mathscr A^W$, then these functions form a set of  proper generators of $\mathscr A^W$ if and only if for any $1\le j\le \ell$ there exists a polynomial $s^j$ of $\lambda$ and $\{y^r\big|~1 \le r \le \ell,~ \theta_r < \theta_j\}$ such that either $s^j=0$ or $\deg s^j = \theta_j - \kappa$, and 
		\begin{align}
			\label{proper gen cond}
			z^j = y^j + \lambda s^j.
		\end{align}
		\begin{proof}
			If $z^1,\dots,z^\ell$ satisfy the relation \eqref{proper gen cond}, then it is obvious to see that they form a set of proper generators of $\mathscr A^W$. 
			
			Now we suppose that $\{z^1,\dots,z^\ell\}$ is a set of proper generators of $\mathscr A^W$, then for any $1\le j\le \ell$ we have $(z^j - y^j) \in \mathscr A^W$ and
			\begin{align*}
				\left(z^j-y^j\right)\big|_{\lambda=0}=0.
			\end{align*}
			Hence there exists $s^j \in \mathscr A^W$ such that $z^j-y^j = \lambda s^j$, and either $s^j=0$ or $\deg s^j = \theta_j - \kappa$. By using Theorem \ref{poly generators} and by degree counting, we know that $s^j$ is a polynomial of $\lambda$ and $y^r$'s whose degrees are less than $\theta_j$. The lemma is proved.
		\end{proof}
	\end{lem}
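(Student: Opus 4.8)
The plan is to prove the two directions separately, with the forward direction (proper generators $\Rightarrow$ the stated form) being the substantive one. For the easy direction, suppose each $z^j = y^j + \lambda s^j$ with $s^j$ a polynomial in $\lambda$ and in the $y^r$ with $\theta_r < \theta_j$. Since $y^j, \lambda \in \mathscr A^W$ and each such $y^r \in \mathscr A^W$, we get $z^j \in \mathscr A^W$; homogeneity of $y^j$, of $\lambda$ (degree $\kappa$), and of each $y^r$, together with the degree condition $\deg s^j = \theta_j - \kappa$ (or $s^j = 0$), forces $\deg z^j = \theta_j$; and setting $\lambda = 0$ kills the $\lambda s^j$ term, giving $z^j|_{\lambda=0} = y^j|_{\lambda=0}$. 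Hence $\{z^1,\dots,z^\ell\}$ is a set of proper generators by Definition \ref{zzh-6}.

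For the converse, assume $\{z^1,\dots,z^\ell\}$ is a set of proper generators. First I would observe that $z^j - y^j \in \mathscr A^W$ and, by \eqref{proper gen defn}, $(z^j - y^j)|_{\lambda=0} = 0$. The key structural fact I need is that every element of $\mathscr A^W$ vanishing at $\lambda = 0$ is divisible by $\lambda$ inside $\mathscr A^W$: writing an arbitrary $f \in \mathscr A^W$ via Theorem \ref{poly generators} as $f = \sum_{j\ge 0}\lambda^j P_j(y^1,\dots,y^\ell)$, the condition $f|_{\lambda=0} = 0$ gives $P_0(y^1|_{\lambda=0},\dots,y^\ell|_{\lambda=0}) = 0$, and since $y^1|_{\lambda=0},\dots,y^\ell|_{\lambda=0}$ are functionally independent by Corollary \ref{functional independence}, the polynomial $P_0$ is identically zero, so $f = \lambda \cdot \sum_{j\ge 1}\lambda^{j-1}P_j \in \lambda\mathscr A^W$. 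Applying this to $f = z^j - y^j$ yields $s^j \in \mathscr A^W$ with $z^j - y^j = \lambda s^j$. Homogeneity then gives either $s^j = 0$ or $\deg s^j = \theta_j - \deg\lambda = \theta_j - \kappa$.

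It remains to show $s^j$ is a polynomial in $\lambda$ and in exactly those $y^r$ with $\theta_r < \theta_j$. By Theorem \ref{poly generators}, $s^j = \sum_{k\ge 0}\lambda^k Q_k(y^1,\dots,y^\ell)$ for polynomials $Q_k$; I would expand each $Q_k$ into monomials $(y^1)^{\beta_1}\cdots(y^\ell)^{\beta_\ell}$ and note that homogeneity of $s^j$ forces every monomial $\lambda^k (y^1)^{\beta_1}\cdots(y^\ell)^{\beta_\ell}$ actually occurring to satisfy $k\kappa + \sum_r \theta_r\beta_r = \theta_j - \kappa$. Since $k \ge 0$, $\kappa > 0$, and each $\theta_r \ge 0$, this forces $\sum_r \theta_r\beta_r \le \theta_j - \kappa < \theta_j$; in particular any index $r$ with $\beta_r > 0$ must have $\theta_r < \theta_j$ (using $\theta_r \le \sum_s \theta_s\beta_s < \theta_j$). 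Thus only generators $y^r$ with $\theta_r < \theta_j$ appear, as claimed. The main obstacle is the first part of the converse — establishing divisibility by $\lambda$ in $\mathscr A^W$ — which is precisely where functional independence of the null transformation (Corollary \ref{functional independence}) is essential; the degree-counting argument in the last step is then routine. This completes the proof.
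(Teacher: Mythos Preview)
Your proof is correct and follows essentially the same route as the paper's: both directions are argued the same way, and the converse proceeds via $(z^j-y^j)|_{\lambda=0}=0 \Rightarrow z^j-y^j=\lambda s^j$ in $\mathscr A^W$, followed by Theorem~\ref{poly generators} plus degree counting. You have simply supplied the details the paper leaves implicit---in particular the justification that $P_0\equiv 0$ via Corollary~\ref{functional independence}, and the explicit inequality $\theta_r\le\sum_s\theta_s\beta_s<\theta_j$ for any $r$ with $\beta_r>0$. (One could also shortcut the divisibility step by noting directly that $\mathscr A=\mathbb C[\lambda]\otimes\mathbb C[\mathrm{e}^{\pm 2\pi i x^j}]$ and that the $W_a(R)$-action fixes $\lambda$, so any invariant element vanishing at $\lambda=0$ has zero constant term in $\lambda$ and the quotient is again invariant; your route through Theorem~\ref{poly generators} is a bit less direct but equally valid.)
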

	\begin{prop}
		\label{zi gen}
		Let $\{z^1,\dots,z^\ell\}$ be a set of proper generators of $\mathscr A^W$, then
		\begin{align}
			\mathscr A^W=\mathbb C[z^1,\dots,z^\ell;\lambda].
		\end{align}
		\begin{proof}
			It suffices to show that $y^j \in \mathbb C[z^1,\dots,z^\ell;\lambda]$ for $j=1,\dots,\ell$. Assume that there exist some $y^j$ which do not belong to $\mathbb C[z^1,\dots,z^\ell;\lambda]$, and let $y^k$ be one of those with the lowest degree. It follows from \eqref{proper gen cond} that $s^k \not \in \mathbb C[z^1,\dots,z^\ell;\lambda]$. However, since $\theta_k$ is the lowest degree among the ones of $y^j$ that do not belong to $C[z^1,\dots,z^\ell;\lambda]$, we know that $y^r \in \mathbb C[z^1,\dots,z^\ell;\lambda]$ for each $r$ satisfying $\theta_r < \theta_k$. Then we know from Lemma \ref{sj is poly of yj and lambda} that $s^k \in \mathbb C[z^1,\dots,z^\ell;\lambda]$, which leads to a contradiction. The proposition is proved.
		\end{proof}
	\end{prop}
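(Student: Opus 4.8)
The plan is to reduce the statement to Theorem \ref{poly generators} and then run an induction on degree. The inclusion $\mathbb C[z^1,\dots,z^\ell,\lambda]\subseteq\mathscr A^W$ is immediate, since $z^1,\dots,z^\ell$ and $\lambda$ all lie in the ring $\mathscr A^W$; the content of the statement is the reverse inclusion. By Theorem \ref{poly generators} we already know $\mathscr A^W=\mathbb C[y^1,\dots,y^\ell;\lambda]$, so it suffices to show that each basic generator $y^j$ lies in $\mathbb C[z^1,\dots,z^\ell,\lambda]$; once this is established the reverse inclusion follows at once.

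To express the $y^j$ through the $z^j$ I would invoke Lemma \ref{sj is poly of yj and lambda}: since $\{z^1,\dots,z^\ell\}$ is a set of proper generators, we have $z^j=y^j+\lambda s^j$ where $s^j$ is either $0$ or a polynomial in $\lambda$ and in those basic generators $y^r$ with $\theta_r<\theta_j$. Hence $y^j=z^j-\lambda s^j$, and I would argue by induction on the degree $\theta_j$ (equivalently, by a minimal-counterexample argument). In the base case $\theta_j$ is smallest among $\theta_1,\dots,\theta_\ell$, so no $y^r$ of strictly smaller degree occurs, $s^j$ is a polynomial in $\lambda$ alone, and $y^j=z^j-\lambda s^j\in\mathbb C[z^j,\lambda]$. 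For the inductive step, assuming $y^r\in\mathbb C[z^1,\dots,z^\ell,\lambda]$ for all $r$ with $\theta_r<\theta_j$, the element $s^j$ --- a polynomial in $\lambda$ and such $y^r$ --- also lies in $\mathbb C[z^1,\dots,z^\ell,\lambda]$, and therefore so does $y^j=z^j-\lambda s^j$.

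The one step that does real work, as opposed to bookkeeping, is the degree bound in Lemma \ref{sj is poly of yj and lambda}: the fact that $s^j$ involves only the $y^r$ of \emph{strictly} smaller degree $\theta_r<\theta_j$ is exactly what makes the induction well-founded, and this in turn rests on the defining condition \eqref{proper gen cond} of proper generators together with the homogeneity of the $y^j$ and of $\lambda$. Indices sharing a common value $\theta_j$ cause no circularity, since $s^j$ never involves a $y^r$ of equal or larger degree. Everything else is formal, so I expect the write-up to be brief: reduce to $y^j\in\mathbb C[z^1,\dots,z^\ell,\lambda]$ via Theorem \ref{poly generators}, then induct on $\theta_j$ using $y^j=z^j-\lambda s^j$ and Lemma \ref{sj is poly of yj and lambda}. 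The main obstacle --- already handled upstream --- was precisely establishing that degree bound; once Lemma \ref{sj is poly of yj and lambda} is in hand, no further difficulty remains.
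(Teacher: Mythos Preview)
Your proposal is correct and follows essentially the same approach as the paper: reduce via Theorem~\ref{poly generators} to showing each $y^j\in\mathbb C[z^1,\dots,z^\ell,\lambda]$, then use the degree bound from Lemma~\ref{sj is poly of yj and lambda} to argue by induction on $\theta_j$. The only cosmetic difference is that the paper phrases the induction as a minimal-counterexample argument rather than a direct induction, which is logically equivalent.
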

	\begin{cor}
		\label{equal det}
		For any set $\{z^1,\dots,z^\ell\}$ of proper generators of $\mathscr A^W$, we have
		\begin{align}
			\det\left(\pdf{z^j}{x^p}\right)=\det\left(\pdf{y^j}{x^p}\right)=\mathbf{J}(\hat{\bfx};\lambda),\label{zzh-3}
		\end{align}
		where $\mathbf{J}(\hat{\bfx};\lambda)$ is defined by \eqref{determinant of phi lambda}. Thus
		$z^1,\dots,z^\ell$ are functionally independent for each $\lambda \in \mathbb C$.
	\end{cor}
	\begin{proof}
		For each $1\le j\le \ell$ we have $z^j = y^j + \lambda s^j$, where $s^j$ is a certain polynomial of $\lambda$ and $y^r$ that satisfy the condition $\theta_r<\theta_j$. Thus from the relations
		\begin{align}
			\pdf{z^j}{x^p}=\pdf{y^j}{x^p} + \lambda\sum_{\theta_r < \theta_j}\pdf{s^j}{y^r}\pdf{y^r}{x^p}
		\end{align}
		we arrive at \eqref{zzh-3}. The corollary is proved.
	\end{proof}
	
	Suppose $\{z^1,\dots,z^\ell\}$ is a set of proper generators, then $\mathscr P = \mathbb C[z^1,\dots,z^\ell]$ is a proper factor. In fact, it follows from the corollary above that $z^1,\dots,z^\ell$ and $\lambda$ are algebraically independent (as functions of $x^1,\dots,x^\ell$ and $\lambda$), hence $\mathscr A^W = \mathbb C[z^1,\dots,z^\ell]\otimes\mathbb C[\lambda]$. Since $z^1,\dots,z^\ell$ are quasi-homogeneous, the subalgebra $\mathbb C[z^1,\dots,z^\ell]$ is graded. The following theorem shows that all proper factors have such a form. Furthermore, each proper factor is generated by a unique set of proper generators.
	
	\begin{thm}
		Suppose $\mathscr P$ is a proper factor of $\mathscr A^W$, then there exists a unique set of proper generator $\{z^1,\dots,z^\ell\}$ such that $\mathscr P = \mathbb C[z^1,\dots,z^\ell]$.
		\begin{proof}
			By definition, we have $\mathscr A^W = \mathscr P\otimes \mathbb C[\lambda]$, hence each $y^i \in \mathscr A^W$ can be represented in the form
			\begin{align}
				\label{compare homo}
				y^i = \sum_{\alpha}p^i_\alpha T^i_\alpha(\lambda),\quad p^i_\alpha \in \mathscr P,\  T^i_\alpha(\lambda)\in\mathbb C[\lambda].
			\end{align}
			In particular, we have
			\[y^i\big|_{\lambda=0} = \Bigl(\sum_{\alpha}p^i_\alpha T^i_\alpha(0)\Bigr)\Bigl|_{\lambda=0},\]
			where $\sum_{\alpha}p^i_\alpha T^i_\alpha(0) \in \mathscr P$. So we define $z^i$ to be the $\theta_i$-degree component of $\sum_{\alpha} p^i_\alpha T^i_\alpha(0)$, then $\{z^1,\dots,z^\ell\}$ is a set of proper generators. Since $\mathscr P$ is graded, we have $z^i \in \mathscr P$, thus $\mathbb C[z^1,\dots,z^\ell] \subset \mathscr P$. However, it follows from the properties of proper generators that $\mathscr A^W = \mathbb C[z^1,\dots,z^\ell] \otimes\mathbb C[\lambda]$. Since $\mathbb C[\lambda]$ is faithfully flat, we have \[\mathbb C[z^1,\dots,z^\ell] = \mathscr P.\]
			
			Now, let us prove the uniqueness of the set of proper generators. Suppose there is another set of proper generators $\{\tilde{z}^1,\dots,\tilde{z}^\ell\}$ such that $\mathscr P = \mathbb C[\tilde{z}^1,\dots,\tilde{z}^\ell]$, then for each $i=1,\dots,\ell$ we have
			\begin{align}
				\label{unique vanish}
				z^i - \tilde{z}^i = \lambda (s^i - \tilde s^i).
			\end{align}
			Here we write $\tilde{z}^i = y^i+\lambda \tilde{s}^i$. Since $z^i - \tilde{z}^i \in \mathscr P = \mathbb C[z^1,\dots,z^\ell]$ and $(s^i - \tilde s^i) \in \mathscr A^W = \mathbb C[z^1,\dots,z^\ell;\lambda]$, if $s^i - \tilde s^i$ do not vanish, then \eqref{unique vanish} gives a nontrivial algebraic relation of $z^1,\dots,z^\ell$ and $\lambda$, which contradicts with their algebraic independence. Therefore, $z^i = \tilde{z}^i$ for each $i=1,\dots,\ell$.
		\end{proof}
	\end{thm}
	
	The above theorem shows that the map
	\begin{align}
		\{\text{proper~generators}\} \to \{\text{proper~factors}\},\quad \{z^1,\dots,z^\ell\} \mapsto \mathbb C[z^1,\dots,z^\ell].
	\end{align}
	is a bijection. As a result, we can characterize a proper factor by its proper generators.
	
	For any given set $\{z^1,\dots,z^\ell\}$ of proper generators of $\mathscr A^W$, we still call the transformation which maps $(x^1,\dots,x^\ell)$ to $(z^1,\dots,z^\ell)$ the $\varphi_\lambda$-transformation, and the corresponding $\varphi_0$-transformation the null-transformation. Note that $\{z^1,\dots,z^\ell\}$ and $\{y^1,\dots,y^\ell\}$ may lead to distinct $\varphi_\lambda$-transformations, but they always share the same null-transformation. In particular, for any element $r$ of the set $S$ defined by \eqref{zzh-4} we have
	\begin{align}
		\label{zr lambda=0 = ...}
		z^r\big|_{\lambda = 0} = \mathrm{e}^{2\pi i x^r}.
	\end{align}
	Hence from Corollary \ref{sum mr logyr = 2pi(omega,x)}
	it follows that
	\begin{align}
		\label{mrzr}
		\sum_{r=1}^\ell m_r\log\left(z^r\big|_{\lambda=0}\right) = 2\pi i(\omega, \hat{\bfx}).
	\end{align}
	
	In the next section, we will impose an additional condition on proper generators of $\mathscr A^W$ in order to construct generalized Frobenius manifold structures on the orbits space of the affine Weyl group $W_a(R)$.
	
	\section{The construction of generalized Frobenius manifold structures}
	\label{GFM structure}
	For a given irreducible reduced root system $R$ in an $\ell$-dimensional Euclidean space $V$, let us fix a weight $\omega$ of the form \eqref{omega = sum mr omegar}, a proper factor $\mathscr P$ of $\mathscr A^W$ with the corresponding set $\{z^1,\dots,z^\ell\}$ of proper generators. We are to impose a certain additional condition on these proper generators, so that we construct a generalized Frobenius manifold structure on a dense open subset of the spectrum of $\mathscr P$.
	
	\subsection{The orbit space and pencil generators}
	Consider the manifold $\mathcal M = \mathbb{C}^\ell$ on which $z^1,\dots,z^\ell$ are global coordinates. Taking $x^1,\dots,x^\ell$ as coordinates on the complexified Euclidean space $V\otimes\mathbb C$, we have a collection of holomorphic maps
	\begin{equation}\label{zh-06-01a}
		\varphi_\lambda\colon V\otimes\mathbb C \to \mathcal M,\quad (x^1,\dots,x^\ell)\mapsto (z^1,\dots,z^\ell),\quad \lambda\in\mathbb C.
	\end{equation}
	Here the $\varphi_\lambda$-transformation is defined in the above section.
	We call $\mathcal M$ the \emph{orbit space of the affine Weyl group} $W_a(R)$ w.r.t. the proper generators $z^1,\dots,z^\ell$. In this section, we will use this collection of maps $\{\varphi_\lambda\}_{\lambda \in \mathbb C}$ to introduce a structure of generalized Frobenius manifold on a dense open subset of $\mathcal M$.
	
	The inner product $(\cdot\,,\cdot)$ of the Euclidean space $V$ yields a contravariant metric 
	\begin{equation}
		\label{a=(.,.)}
		a(\dif x^i, \dif x^j) =a^{ij}\quad \mbox{with}\ (a^{ij})=\left((\alpha_i^\vee, \alpha_j^\vee)\right)^{-1},\quad i, j=1,\dots,\ell
	\end{equation}
	on $V\otimes \mathbb{C}$. 
	By a contravariant metric, we mean a non-degenerate, symmetric bilinear form on the holomorphic cotangent bundle $T^*(V\otimes\mathbb{C})$. We will also call $a$ a metric on $V\otimes \mathbb{C}$. 
	
	\begin{prop} The numbers $\theta_j$ defined by \eqref{weight} can be represented in the form
		\begin{equation}\label{theta r}
			\theta_j = a^{jr}m_r,\quad j=1,\dots,\ell.
		\end{equation}
		Here and in what follows, we assume summation over repeated upper and lower indices.
		\begin{proof}
			From $\omega = \sum_{s}m_s\omega_s = \sum_p \theta_p\alpha_p^\vee$ it follows that
			\begin{align}
				\label{tmp trivial}
				\sum_{p = 1}^\ell\theta_p(\alpha_p^\vee, \alpha_s^\vee)= (\omega, \alpha_s^\vee) =m_s,
			\end{align}
			thus \eqref{theta r} holds true. The proposition is proved.
		\end{proof}
	\end{prop}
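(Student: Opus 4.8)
The plan is to compute the pairing of $\omega$ with the coroots $\alpha_s^\vee$ in two ways and then invert the Gram matrix of the coroots. The proposition is really just a bookkeeping identity, so I do not expect a genuine obstacle; the only thing to be careful about is keeping track of which matrix is being inverted.

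First I would use the two expressions $\omega = \sum_s m_s \omega_s = \sum_p \theta_p \alpha_p^\vee$ for the fixed weight, the first coming from \eqref{omega = sum mr omegar} and the second from the definition of the affine coordinates together with \eqref{weight}. Pairing the second expression with $\alpha_s^\vee$ gives $(\omega,\alpha_s^\vee) = \sum_p \theta_p (\alpha_p^\vee,\alpha_s^\vee)$, while pairing the first expression with $\alpha_s^\vee$ and using the defining relation $(\omega_i,\alpha_j^\vee)=\delta_{ij}$ of the fundamental weights gives $(\omega,\alpha_s^\vee) = \sum_j m_j \delta_{js} = m_s$. Combining these yields
\begin{align}
	\sum_{p=1}^\ell \theta_p (\alpha_p^\vee,\alpha_s^\vee) = m_s,\quad s=1,\dots,\ell.
\end{align}

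Then I would simply read off the conclusion: the vector $(\theta_1,\dots,\theta_\ell)$ solves the linear system whose coefficient matrix is $\bigl((\alpha_p^\vee,\alpha_s^\vee)\bigr)$, so multiplying by the inverse matrix $(a^{jr}) = \bigl((\alpha_i^\vee,\alpha_j^\vee)\bigr)^{-1}$ from \eqref{a=(.,.)} gives $\theta_j = a^{jr} m_r$, with the summation convention over the repeated index $r$ as announced in the statement. The only point worth a half-sentence of care is that $\bigl((\alpha_i^\vee,\alpha_j^\vee)\bigr)$ is indeed invertible, which holds because it is the Gram matrix of a basis of the nondegenerate inner product space $V$; this is exactly what makes $(a^{ij})$ well defined in \eqref{a=(.,.)} in the first place, so nothing new is needed.
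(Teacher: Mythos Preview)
Your proposal is correct and follows essentially the same route as the paper: pair the two expressions $\omega=\sum_s m_s\omega_s=\sum_p\theta_p\alpha_p^\vee$ with $\alpha_s^\vee$ to obtain $\sum_p\theta_p(\alpha_p^\vee,\alpha_s^\vee)=m_s$, then invert the Gram matrix. Your version is slightly more detailed (you spell out why $(\omega,\alpha_s^\vee)=m_s$ and remark on invertibility), but the argument is the same.
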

	
	The $\varphi_\lambda$-transformation induces a collection of pushing forward $\{(\varphi_\lambda)_*\}_{\lambda \in \mathbb C}$, which transforms $(a^{ij})$ to a contravariant metric $(\varphi_\lambda)_*a$ on $\mathcal M$. We denote 
	\[g_\lambda=\frac 1{4\pi^2}(\varphi_\lambda)_*a=\bigl(g_\lambda^{ij}\bigr),\] where
	\begin{align}\label{zzh-5}
		g_\lambda^{ij} = \frac 1{4\pi^2} \pdf{z^i}{x^r} a^{rs}\pdf{z^j}{x^s},\quad i,j = 1,\dots,\ell.
	\end{align}
	Note that we introduce the factor $1/{4\pi^2}$ to simplify the expressions of $g_\lambda^{ij}$'s. We denote 
	\begin{align}
		\Gamma^{ij}_{g_\lambda,k}=-g_{\lambda}^{ir}\Gamma_{g_{\lambda},rk}^j,
		\quad i, j, k=1,\dots,\ell,
	\end{align}
	and call them the Christoffel symbols of the Levi-Civita connection of the contravariant metric $g_\lambda$, or simply the Christoffel symbols of $g_\lambda$.
	
	\begin{prop} For all indices $i,j,k$, we have
		$g^{ij}_\lambda \in \mathscr A^W$ and $\Gamma^{ij}_{g_\lambda,k} \in \mathscr A^W$.
	\end{prop}
	\begin{proof}
		We first consider the entries of the contravariant metric $g_\lambda$. By definition, we have $g^{ij}_\lambda \in \mathscr A$, hence it suffices to show that $g^{ij}_\lambda$ are invariant w.r.t.  the action of $W_a(R)$. Since the invariance of $g^{ij}_\lambda$ w.r.t.  translations is obvious, we consider actions by elements of the Weyl group $W(R)$. Write $z^j$ as $
		z^j=z^j(x^1,\dots,x^\ell;c)$. 
		For any given $\sigma \in W(R)$, since $z^j(x^1,\dots,x^\ell;c)=z^j(\sigma(x^1,\dots,x^\ell;c))$, we have
		\begin{align}
			\pdf{z^j}{x^r}(x^1,\dots,x^\ell;c)=\pdf{z^j}{x^p}(\sigma(x^1,\dots,x^\ell;c))\pdf{\tilde{x}^p}{x^r},
		\end{align}
		where $\sigma(x^1,\dots,x^\ell;c)=(\tilde{x}^1,\dots,\tilde{x}^\ell;c)$.			Note that $\pdf{\tilde{x}^p}{x^r}$ are constants since $\sigma$ is a linear transformation. Therefore,
		\begin{align*}
			&g^{ij}_\lambda(x^1,\dots,x^\ell;c)=\frac 1{4\pi^2}\pdf{z^i}{x^r}(x^1,\dots,x^\ell;c) a^{rs}\pdf{z^j}{x^s}(x^1,\dots,x^\ell;c)\\
			&\quad =\frac 1{4\pi^2}\pdf{z^i}{x^p}(\sigma(x^1,\dots,x^\ell;c))\pdf{z^j}{x^q}(\sigma(x^1,\dots,x^\ell;c))\pdf{\tilde{x}^p}{x^r}a^{rs}\pdf{\tilde{x}^q}{x^s}.
		\end{align*}
		Since $\sigma$ is an orthogonal transformation, we know that 
		\begin{align*}
			\pdf{\tilde{x}^p}{x^r}a^{rs}\pdf{\tilde{x}^q}{x^s}=a^{pq},
		\end{align*}
		hence we arrive at
		$g^{ij}_\lambda(x^1,\dots,x^\ell;c) = g^{ij}_\lambda(\sigma(x^1,\dots,x^\ell;c))$.
		
		Now we consider the Christoffel symbols $\Gamma^{ij}_{g_\lambda,k}$ of the contravariant metric $g_\lambda$. Since $g_\lambda$ is invariant w.r.t.  the action of $W_a(R)$, it suffices to show that $\Gamma^{ij}_{g_\lambda,k} \in \mathscr A$. From Corollary \ref{equal det}  and Proposition \ref{zh-2026-7-6-1}, it follows that
		\begin{align*}
			\det\left(\pdf{z^i}{x^j}\right) = \mathbf{J}(\hat{\bfx};\lambda) = C\cdot \mathrm{e}^{ \pi i\sum_{\alpha \in R^+}(\alpha, \hat{\bfx})}\prod_{\beta\in R^+}\left(1-\mathrm{e}^{-2\pi i\left(\beta, \bfx\right)}\right),
		\end{align*}
		where $\bfx = c\omega + \hat{\bfx}$.
		For the Christoffel symbols $\Gamma^{ij}_{g_\lambda,k}$, the formula
		\begin{align}
			\Gamma^{ij}_{g_\lambda,k} = \frac 1{4\pi^2}\pdf{z^i}{x^p}\pdff{z^j}{x^q}{x^r}a^{pq}\pdf{x^r}{z^k}
		\end{align}
		holds, hence we have
		\[
		\Gamma^{ij}_{g_\lambda,k} = \frac{P^{ij}_k(\hat{\bfx};\lambda)}{\mathbf{J}(\hat{\bfx};\lambda)},
		\]
		where $P_k^{ij}\in\mathscr A$. Note that $\mathbf{J}(\hat{\bfx};\lambda)$ is anti-invariant and $\Gamma_{\lambda, k}^{ij}$ are invariant w.r.t.  the action of the Weyl group $W(R)$, so the Fourier polynomials $P_k^{ij}(\hat{\bfx};\lambda)$ must be anti-invariant w.r.t.  the action of $W(R)$. Note that for each $\beta \in  R^+$, if $(\beta,\bfx)\in\mathbb Z$, 
		\begin{align}
			\sigma_{\beta}(\bfx) = \bfx - (\beta, \bfx)\beta^\vee = \bfx + n_1\alpha_1^\vee+\cdots+n_\ell\alpha_\ell^\vee,\quad n_1,\dots,n_\ell \in \mathbb Z.\label{(beta,x)}
		\end{align}
		Here $\beta^\vee \in  R^\vee$ is a $\mathbb Z$-combination of $\alpha_1^\vee,\dots,\alpha_\ell^\vee$. It follows from the anti-invariance of $P_k^{ij}$ and \eqref{(beta,x)} that, as long as $(\beta, \bfx)\in \mathbb Z$,
		\[P_k^{ij}(\bfx) = - P_k^{ij}(\sigma_\beta(\bfx)) = -P_k^{ij}(\bfx),\]
		hence $P_k^{ij}$ vanishes along $(\beta,\bfx) \in\mathbb Z$, all zeros of $1-\mathrm{e}^{-2\pi i\left(\beta, \bfx\right)}$. Since $1-\mathrm{e}^{-2\pi i\left(\beta, \bfx\right)}$ has no square factor in $\mathscr A$, it follows from Nullstellensatz's Theorem that $P_k^{ij}$ is divisible by $1-\mathrm{e}^{-2\pi i\left(\beta, \bfx\right)}$. Therefore, it is also divisible by $\mathbf{J}(\hat{\bfx};\lambda)$, and we have $\Gamma^{ij}_{g_\lambda,k} \in \mathscr A$. The proposition is proved.
	\end{proof}
	
	\begin{rem}
		\label{global def}
		This proposition implies that the Christoffel symbols $\Gamma^{ij}_{g_\lambda,k}$ can be defined globally on $\mathcal M$, although $g_\lambda$ may degenerate at some points.
	\end{rem}
	
	The above proposition shows, in particular, that $g^{ij}_{\lambda}$ and $\Gamma_{g_\lambda,k}^{ij}$ are polynomials of $\lambda$. In order to construct generalized Frobenius manifold structures on a dense open subset of the orbit space $\mathcal M$, we are to impose an additional condition on the proper generators of $\mathscr A^W$, so that $g_{\lambda}$ yields a flat pencil of metrics \cite{Du-1998} on $\mathcal M$. 
	
	\begin{defn}
		Suppose $\{z^1,\dots,z^\ell\}$ be a set of proper generators of $\mathscr A^W$. We say that it is a set of \emph{pencil generators} of $\mathscr A^W$ if the associated metric $g_\lambda=\bigl(g^{ij}_\lambda\bigr)$ and its Christoffel symbols $\Gamma_{g_\lambda,k}^{ij}$ depend at most linearly on $\lambda$, and the metric $\partial_\lambda g_\lambda^{ij}$ is non-degenerate at generic points.
	\end{defn}
	From the above definition, for a set of proper generators $\{z^1,\dots,z^\ell\}$, $g^{ij}_\lambda$ can be represented in the form	
	\[g^{ij}_\lambda = g^{ij} + \lambda \eta^{ij},\] 
	where $\eta=(\eta^{ij})$ is non-degenerate at generic points of $\mathcal{M}$, and the Christoffel symbols $\Gamma^{ij}_{g_\lambda,k}$ of $g_\lambda$ can be represented in terms of that of $g$ and $\eta$ as follows:
	\begin{equation}\label{zzh-7}
		\Gamma^{ij}_{g_{\lambda},k}=\Gamma^{ij}_{g,k}+\lambda \Gamma^{ij}_{\eta,k}.
	\end{equation}
	Therefore, a set of \emph{pencil generators} of $\mathscr A^W$ yields a flat pencil \cite{Du-1998} of metrics $g, \eta$.
	Note that the flat metric $g$ is independent of the choice of proper generators, since
	\begin{align}
		g= g_0= \frac 1{4\pi^2}(\varphi_0)_*a
	\end{align}
	is induced merely by the null-transformation which does not depend on the choice of proper generators.
	
	We will present examples of pencil generators in Sect.\,\ref{examples} and in a subsequent paper \cite{JLTZ-2}. Hereafter, ${z^1,\dots,z^\ell}$ denotes a set of pencil generators for $\mathscr A^W$. 
	
	\subsection{Quasi-homogeneous flat coordinates}
	We are to show in this subsection the existence of a system of quasi-homogeneous flat coordinates  for the flat metric $\eta$.  
	
	Flat coordinates of $\eta$ satisfy the system of equations
	\begin{align}
		\label{flat equation}
		\pdff{t}{z^i}{z^j} = \Gamma_{\eta, ij}^k\pdf{t}{z^k},\quad i,j = 1,\dots,\ell.
	\end{align}
	Here $\Gamma_{\eta, ij}^k$ are the Christoffel symbols of the Levi-Civita connection of $\eta$, they are given by
	\begin{align}
		\label{Gamma ijk def}
		\Gamma_{\eta, ij}^k=\frac 12 \eta^{kr}\left(\pdf{\eta_{ir}}{z^j} +\pdf{\eta_{jr}}{z^i} -\pdf{\eta_{ij}}{z^r} \right).
	\end{align}
	From Definition \ref{zzh-6} it follows that
	\begin{align}
		\deg g^{ij} =\theta_i +\theta_j,\quad \deg \eta^{ij}=\theta_i + \theta_j - \kappa,
	\end{align}
	which lead to
	\begin{align}
		\label{Gamma ijk are homo}
		\deg g_{ij} = -\theta_i - \theta_j,\quad  \deg \eta_{ij}=\kappa - \theta_i - \theta_j,\quad \deg \Gamma_{\eta, ij}^k=\theta_k-\theta_i-\theta_j.
	\end{align}
	We are to use the above quasi-homogeneity property to prove the existence of quasi-homogeneous flat coordinates of $\eta$. To this end, let us first clarify the solution space of the system of equations \eqref{flat equation}. Let
	$\mathcal{W}$ be the universal covering space of $\mathcal M\setminus D_\eta$ with
	\begin{equation}\label{zh-05-25a}
		D_\eta = \{z \in \mathcal M\mid \det\eta(z) = 0\}.
	\end{equation}
	We consider solutions of the system of equations \eqref{flat equation} on the space
	\begin{align*}
		\mathscr Q(\mathcal{W}) = \mathscr{O}(\mathcal{W})/\mathbb C,
	\end{align*}
	where $\mathscr{O}(\mathcal{W})$ refers to the holomorphic function ring on $\mathcal{W}$. 
	Denote the solution space of \eqref{flat equation} by 
	\[
	\mathcal K =\left\{t \in \mathscr Q(\mathcal{W})\mid\pdff{t}{z^i}{z^j} = \Gamma_{\eta, ij}^k\pdf{t}{z^k},\ i,j = 1,\dots,\ell\right\}.
	\]
	It follows from the theory of differential equations that $\mathcal K$ is an $\ell$-dimensional space, since a solution is uniquely determined by the values of its $\ell$ partial derivatives at an arbitrary point.
	
	Now we introduce the Euler vector field
	\begin{align}
		\label{Euler}
		E = \sum_{r=1}^{\ell}\theta_r z^r \pdf{}{z^r}.
	\end{align}
	Then for any quasi-homogeneous function $f$ of degree $n$, the following identity holds true:
	\begin{align}
		\mathcal L_E f = nf,
	\end{align}
	here $\mathcal L_E$ is the Lie derivative along $E$. Hence quasi-homogeneous functions can be seen as \textit{eigenvectors} of $\mathcal L_E$, of which the \textit{eigenvalues} are their degrees. Now we consider the action of $\mathcal L_E$ on the solution space $\mathcal K$.
	
	\begin{prop}
		The space $\mathcal K$ is $\mathcal L_E$-invariant, i.e., $\mathcal L_E(\mathcal K) \subseteq \mathcal K$.
	\end{prop}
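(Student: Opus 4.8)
The plan is to convert the invariance statement into a commutation property between the Lie derivative $\mathcal L_E$ and the Levi-Civit\`a connection of $\eta$, the crucial ingredient being the homogeneity of the Christoffel symbols already recorded in \eqref{Gamma ijk are homo}. Write the system \eqref{flat equation} as $H(t)=0$, where $H(t)$ is the symmetric $(0,2)$-tensor with components $H(t)_{ij}=\partial_i\partial_j t-\Gamma^k_{\eta,ij}\partial_k t=(\nabla^\eta\dd t)_{ij}$. Since $E$ is a polynomial, hence holomorphic, vector field on $\mathcal M\cong\mathbb C^\ell$, it restricts to $\mathcal M\setminus D$ and lifts to $\widehat{\mathcal M}$, so $\mathcal L_E t=E(t)=\sum_r\theta_r z^r\partial_r t$ defines, for every $t\in\mathscr Q(\widehat{\mathcal M})$, again an element of $\mathscr Q(\widehat{\mathcal M})$ (the constant ambiguity in $t$ is killed by $E$); it therefore remains only to check that $H(\mathcal L_E t)=0$ whenever $H(t)=0$.

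First I would note that $\deg\eta_{ij}=\kappa-\theta_i-\theta_j$ together with $\mathcal L_E(\dd z^i)=\theta_i\,\dd z^i$ gives $\mathcal L_E\eta=\kappa\eta$, so the flow of $E$ merely rescales $\eta$ by a nonzero constant and hence leaves the Christoffel symbols unchanged; equivalently $\mathcal L_E\nabla^\eta=0$, which at the level of components is exactly the homogeneity $E(\Gamma^k_{\eta,ij})=(\theta_k-\theta_i-\theta_j)\Gamma^k_{\eta,ij}$ from \eqref{Gamma ijk are homo}. Next I would establish, by a direct expansion in coordinates, the identity
\[
\nabla^\eta\bigl(\mathcal L_E\omega\bigr)=\mathcal L_E\bigl(\nabla^\eta\omega\bigr)
\]
for every holomorphic $1$-form $\omega=\omega_i\,\dd z^i$: using $E^r=\theta_r z^r$ and $\partial_i E^r=\theta_i\delta^r_i$, both sides produce the same terms involving the first and second derivatives of the components $\omega_j$, and the difference of the remaining zeroth-order terms equals $\bigl(E(\Gamma^k_{\eta,ij})+(\theta_i+\theta_j-\theta_k)\Gamma^k_{\eta,ij}\bigr)\omega_k$, which vanishes by the homogeneity just recalled. (Note that this step uses only the homogeneity of the $\Gamma^k_{\eta,ij}$, not the flatness of $\eta$.) Applying this identity with $\omega=\dd t$ and using $\dd(\mathcal L_E t)=\mathcal L_E(\dd t)$ then yields
\[
H(\mathcal L_E t)=\nabla^\eta\bigl(\dd(\mathcal L_E t)\bigr)=\nabla^\eta\bigl(\mathcal L_E(\dd t)\bigr)=\mathcal L_E\bigl(\nabla^\eta(\dd t)\bigr)=\mathcal L_E\bigl(H(t)\bigr),
\]
so $H(t)=0$ forces $H(\mathcal L_E t)=0$, i.e. $\mathcal L_E t\in\mathcal K$, which is the assertion $\mathcal L_E(\mathcal K)\subseteq\mathcal K$.

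I do not anticipate a genuine obstacle; the only delicate point is the bookkeeping in the component computation of the commutation identity — in particular keeping track that $E^r=\theta_r z^r$ carries no summation, that $\Gamma^k_{\eta,ij}$ is symmetric in $i,j$, and the placement of the $\theta$-factors coming from $\mathcal L_E\,\dd z^i=\theta_i\,\dd z^i$. One could instead quote the standard fact that $\mathcal L_E$ commutes with a connection $\nabla$ as soon as $\mathcal L_E\nabla=0$, reducing the whole argument to the one-line observation $\mathcal L_E\eta=\kappa\eta$; I would nonetheless include the short explicit verification to keep the section self-contained.
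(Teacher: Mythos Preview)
Your proposal is correct and rests on the same idea as the paper's proof: the homogeneity $\mathcal L_E\Gamma^k_{\eta,ij}=(\theta_k-\theta_i-\theta_j)\Gamma^k_{\eta,ij}$ from \eqref{Gamma ijk are homo}. The paper simply packages this as the operator commutator $[\mathcal L_E,\mathcal L_{ij}]=-(\theta_i+\theta_j)\mathcal L_{ij}$ with $\mathcal L_{ij}=\partial_i\partial_j-\Gamma^k_{\eta,ij}\partial_k$, which is the scalar-component version of your tensorial identity $\nabla^\eta(\mathcal L_E\omega)=\mathcal L_E(\nabla^\eta\omega)$ (the extra $-(\theta_i+\theta_j)$ in the paper is absorbed, in your formulation, into the Lie derivative acting on the $(0,2)$-tensor $H(t)$); so the two arguments are the same computation in different notation, the paper's being a bit shorter.
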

	\begin{proof}
		Denote
		\begin{align}
			\mathcal L_{ij} = \pdff{}{z^i}{z^j} - \Gamma_{\eta, ij}^k\pdf{}{z^k}.
		\end{align}
		By using \eqref{Gamma ijk are homo} we obtain 
		\[\mathcal L_E \Gamma_{\eta, ij}^k = (\theta_k-\theta_i-\theta_j)\Gamma_{\eta, ij}^k,\]
		from which it follows that
		\begin{align}
			\left[\mathcal L_E, \mathcal L_{ij}\right] = -(\theta_i + \theta_j)\mathcal L_{ij}.
		\end{align}
		Hence if $t \in \ker\mathcal L_{ij}$, then $\mathcal L_E t\in \ker\mathcal L_{ij}$. The proposition is proved.
	\end{proof}
	
	In what follows, we assume for conciseness that $\mathcal L_E|_{\mathcal K}$ is diagonalizable. This assumption of diagonalization of the vector field $E$ implies that $\eta$ admits $\ell$ quasi-homogeneous flat coordinates which we denote by $t^1,\dots,t^\ell$. Let
	\begin{align}\label{zh-06-02a}
		d_\alpha = \deg t^\alpha,\quad \alpha = 1,\dots,\ell,
	\end{align}
	then we have
	\begin{align}
		E = \sum_{r=1}^\ell \theta_r z^r \pdf{t^\alpha}{z^r}\tangentvector{t^\alpha} = \sum_{\alpha=1}^\ell d_\alpha t^\alpha\tangentvector{t^\alpha}.
	\end{align}
	
	Now let us consider the flat pencil of metrics $g$ and $\eta$ in the flat coordinates $t^1,\dots,t^\ell$. We denote their components by $g^{\alpha\beta}$ and $\eta^{\alpha\beta}$ with Greek indices respectively. Recall that we use $g^{ij}$ and $\eta^{ij}$ with Latin indices to denote the components of $g$ and $\eta$ in the  coordinates $z^1,\dots, z^\ell$. Since $\eta^{\alpha\beta}$ are constants, from 	\begin{align}\label{zh-05-25b}
		\deg \eta^{\alpha\beta} =d_\alpha + d_\beta - \kappa 
	\end{align}
	it follows that $\eta^{\alpha\beta}$ vanishes if $d_\alpha + d_\beta \not= \kappa$. Thus by performing, if necessary, a degree preserving linear transformation, we can choose the flat coordinates $t^1,\dots, t^n$ of $\eta$ such that
	\begin{align}
		\label{eta standard}
		\eta^{\alpha\beta} = \delta_{\beta, \alpha^*}.
	\end{align}
	Here $ *$ is an involution
	\[
	*\colon \{1,\dots,l\} \to \{1,\dots,l\},\quad \alpha \to \alpha^*,
	\]
	which satisfies the relation $d_{\alpha} + d_{\alpha^*} = \kappa$. 
	Usually, we prefer to choose the flat coordinates so that $\alpha^* =\ell+1-\alpha$. 
	
	
	We note that $d_1,\dots,d_\ell$ are the eigenvalues of the operator $\nabla^\eta E$ on each point of $\mathcal M\setminus D_\eta$, corresponding to the eigenvectors $\tangentvector{t^1},\dots,\tangentvector{t^\ell}$, so they are also the eigenvalues of the representation matrix $A$ of $\nabla^\eta E$ in the basis $\tangentvector{z^1},\dots,\tangentvector{z^\ell}$. Since
	\begin{align}
		\nabla_{\tangentvector{z^i}}^\eta E=\nabla_{\tangentvector{z^i}}^\eta\sum_{k=1}^\ell \theta_k z^k \tangentvector{z^k}=\left(\sum_{k=1}^{\ell}\theta_kz^k\Gamma_{\eta,ik}^j + \theta_i \delta^j_i\right)\tangentvector{z^j},
	\end{align}
	the matrix $A$ has entries
	\begin{align}
		\label{A matrix}
		A_j^i =\sum_{k=1}^{\ell}\theta_k z^k\Gamma_{\eta,j k}^i + \theta_j \delta^i_j.
	\end{align}
	It has eigenvectors $\left(\pdf{t^\alpha}{z^1},\dots,\pdf{t^\alpha}{z^\ell}\right)^T$ with eigenvalues $d_\alpha$, i.e.,
	\begin{equation}
		A\begin{pmatrix}
			\pdf{t^\alpha}{z^1}\\\vdots\\\pdf{t^\alpha}{z^r}
		\end{pmatrix}=d_\alpha\begin{pmatrix}
			\pdf{t^\alpha}{z^1}\\\vdots\\\pdf{t^\alpha}{z^r}
		\end{pmatrix}.\label{Axidxi}
	\end{equation}
	
	\begin{lem}\label{zh-06-02b}
		All $d_\alpha$ have positive real parts.
	\end{lem}
	The proof of this lemma will be given in Appendix \ref{degge0}. In fact, we will show in Corollary \ref{deg rational} that all $d_\alpha$ are positive rational numbers.
	
	\subsection{The Frobenius algebra}
	\label{the frobenius algebra}
	Denote by 
	\begin{align}
		\Gamma_\gamma^{\alpha\beta}=-g^{\alpha\epsilon}\Gamma_{\epsilon\gamma}^\beta,\quad \al, \beta, \gamma=1,\dots,\ell
	\end{align}
	the Christoffel symbols of the Levi-Civita connection of the contravariant $g$ in the flat coordinates $t^1,\dots, t^\ell$ of $\eta$.
	Then from \eqref{zzh-7} we know that $\Gamma_\gamma^{\alpha\beta}$ are also the Christoffel symbols of the Levi-Civita connection of $g+\lambda\eta$ for each $\lambda \in \mathbb C$. Moreover, we know from Remark \ref{global def} that $\Gamma^{\alpha\beta}_\gamma$ are defined globally on $\mathcal M \setminus D_\eta$.
	
	We define an algebra structure on $T^*(\mathcal M\setminus D_\eta)$ by
	\begin{align}
		\label{Frobenius alg}
		\dif t^\alpha \cdot\dif t^\beta =\frac {\kappa}{d_\beta}\Gamma_\gamma^{\alpha\beta}\dif t^\gamma,\quad \alpha, \beta=1,\dots,\ell,
	\end{align}
	where $d_\beta$ is the degree of $t^\beta$. From Lemma \ref{zh-06-02b}, we know in particular that $d_\beta \not= 0$. This algebra is called the dual Frobenius algebra on $T^*(\mathcal M\setminus D_\eta)$. Denote
	\begin{align*}
		D_0 = \{z \in \mathcal M \, |\, \exists\, i\in S,\, z^i = 0\},\quad D = D_\eta \cup D_0,
	\end{align*}
	and $\mathcal M_D = \mathcal M \setminus D$, then $\mathcal M_D$ is a dense open subset of $\mathcal M$.
	Let us define a one-form
	\begin{align}\label{zh-05-24b}
		\omega_e = -\sum_{r=1}^{\ell}\frac1{\kappa}m_r \dif\log z^r
	\end{align}
	on $\mathcal M_D$. We are to show in this subsection that the multiplication \eqref{Frobenius alg} endows $T^*\mathcal M_D$ a Frobenius algebra structure with the unity $\omega_e$ and the bilinear form $\langle\cdot\,, \cdot\rangle$ given by the flat metric $\eta$.
	
	\begin{prop}\label{zh-05-23}
		The one-form \eqref{zh-05-24b}
		is a left unity of the algebra structure defined by \eqref{Frobenius alg} on $T^*\mathcal{M}_D$.
	\end{prop}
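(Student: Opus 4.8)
The plan is to verify directly that $\omega_e \cdot \dif t^\beta = \dif t^\beta$ for every $\beta$, using the definition \eqref{Frobenius alg} of the multiplication together with the homogeneity of the flat metric $g$ under the Euler vector field $E$. The key observation is that the Euler vector field, raised via $g$ to a $1$-form, should coincide with $\omega_e$ up to the normalization constants built into \eqref{Frobenius alg}; more precisely, I expect that $\omega_e$ is, up to a scalar, the $g$-dual of $E$, and that the Lie derivative identity $\mathcal{L}_E g = (\text{const})\, g$ translates, in flat coordinates of $\eta$, into precisely the statement that contraction of $\Gamma^{\alpha\beta}_\gamma$ with the components of $\omega_e$ reproduces the identity operator up to the factor $d_\beta/\kappa$ appearing in \eqref{Frobenius alg}.

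Concretely, first I would compute the components of $\omega_e$ in the flat coordinates $t^1,\dots,t^\ell$. From \eqref{mrzr} we have $\sum_r m_r \log z^r = 2\pi i (\omega,\hat{\bfx})$ modulo the $\lambda$-correction; more usefully, since $\omega = \sum_p \theta_p \alpha_p^\vee$, one gets $\sum_r m_r \dif \log z^r = \sum_r \frac{m_r}{z^r}\dif z^r$, and by \eqref{theta r}, $m_r = a_{rs}\theta^s$ in the appropriate index placement—so that contracting against $g$ produces $E$. I would then express $g$ in the form $g^{\alpha\beta} = $ (something homogeneous of degree $d_\alpha + d_\beta$) and use that $\mathcal{L}_E g^{\alpha\beta} = (d_\alpha + d_\beta - \kappa + \text{shift})\, g^{\alpha\beta}$; combined with the flatness equation $\nabla^\eta$ applied to $g$ this gives a linear relation between $g^{\alpha\beta}$, the $\Gamma^{\alpha\beta}_\gamma$, and the coordinates $t^\gamma$.

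The decisive computation is then the following: writing $\omega_e = (\omega_e)_\alpha \dif t^\alpha$, I must show $\frac{\kappa}{d_\beta}(\omega_e)_\alpha \Gamma^{\alpha\beta}_\gamma = \delta^\beta_\gamma$. I would derive this by starting from the identity $g^{\alpha\beta} = \Gamma^{\alpha\beta}_\gamma \cdot (\text{linear in } t) + \dots$ coming from the homogeneity of $g$ — specifically, applying the Euler field to the relation $\partial_\gamma g^{\alpha\beta} = -g^{\alpha\epsilon}\Gamma^\beta_{\epsilon\gamma} - g^{\beta\epsilon}\Gamma^{\alpha}_{\epsilon\gamma}$, i.e. $\mathcal{L}_E$ acting on $g^{\alpha\beta}$, and contracting with $d_\gamma t^\gamma$. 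The point is that $(\omega_e)_\alpha$ turns out to be proportional to $d_\alpha t_\alpha^\flat$ in a way that makes the contraction collapse to a Kronecker delta after using $\eta$ to raise and lower and invoking \eqref{eta standard} together with $d_\alpha + d_{\alpha^*} = \kappa$.

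The main obstacle I anticipate is bookkeeping the degrees and index placements correctly: one must be careful that $g$ is homogeneous of degree $\theta_i + \theta_j$ in the $z$-coordinates (hence of degree $d_\alpha + d_\beta$ in the $t$-coordinates), that $\eta$ carries the extra shift by $\kappa$ as recorded in \eqref{Gamma ijk are homo} and \eqref{zh-05-25b}, and that the normalization $\kappa/d_\beta$ in \eqref{Frobenius alg} is exactly what is needed to absorb these shifts. A secondary subtlety is that \eqref{mrzr} holds only at $\lambda = 0$, so to handle general $\lambda$ I would instead work with the exact relation $\sum_r m_r \dif\log z^r = 2\pi i\, \dd(\omega,\hat{\bfx}) + (\lambda\text{-terms})$ — but since $g$ and the $\Gamma^{\alpha\beta}_\gamma$ are $\lambda$-independent (the latter by \eqref{zzh-7}, the former as $g = g_0$), it suffices to verify the identity as one of rational functions on $\mathcal{M}$, and the cleanest route is to exhibit $\omega_e$ intrinsically as $\frac{1}{\kappa} g(E,\cdot)^\flat$ raised by $g^{-1}$ — i.e. $\omega_e^\sharp$ equals $E$ up to scale — reducing the whole proposition to the homogeneity of $g$.
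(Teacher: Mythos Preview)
Your core idea is correct and coincides with the paper's approach. Both arguments pass to $\lambda=0$ (legitimate because the multiplication \eqref{Frobenius alg} is $\lambda$-independent), use \eqref{mrzr} together with \eqref{theta r} to identify $g^\sharp(\omega_e)$ with $\tfrac{1}{\kappa}E$ in the $g$-flat coordinates $x^1,\dots,x^\ell$, and then invoke the homogeneity of $t^\beta$ as a Fourier polynomial of degree $d_\beta$ to obtain $\nabla^g_{E}\dif t^\beta = d_\beta\,\dif t^\beta$. Your final sentence (``exhibit $\omega_e$ intrinsically as $\tfrac{1}{\kappa}g^\flat(E)$\dots reducing the whole proposition to the homogeneity of $g$'') is precisely the paper's computation, rephrased; indeed the paper records $e_\alpha g^{\alpha\beta}=\tfrac{1}{\kappa}d_\beta t^\beta$ as a corollary immediately afterward.

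One caution: the intermediate paragraph where you propose to derive $\tfrac{\kappa}{d_\beta}(\omega_e)_\alpha\Gamma^{\alpha\beta}_\gamma=\delta^\beta_\gamma$ by applying $\mathcal{L}_E$ to the relation $\partial_\gamma g^{\alpha\beta}=\Gamma^{\alpha\beta}_\gamma+\Gamma^{\beta\alpha}_\gamma$ is a detour that does not close on its own. Homogeneity of $g^{\alpha\beta}$ gives $\mathcal{L}_E g=0$ as a $(2,0)$-tensor, but contracting $E^\gamma$ into the \emph{lower} index of $\Gamma^{\alpha\beta}_\gamma$ is not what you need; you need $(\omega_e)_\alpha$ contracted into the \emph{upper} index. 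The reason the argument works is not abstract homogeneity of $g$ but the stronger fact that $E$ is $g$-\emph{parallel} (it has constant coefficients in the $x$-coordinates), so that $\nabla^g_E=\mathcal{L}_E$ on $1$-forms. You have this fact available once you pass to the $x$-coordinates at $\lambda=0$, exactly as the paper does --- so drop the $\mathcal{L}_E$-on-Christoffel detour and go straight to your ``cleanest route''.
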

	\begin{proof}
		As a closed condition, it suffices to prove this proposition on generic points of $\{\det g \not= 0\}$ on $\mathcal M_D$. From the definition of the multiplication \eqref{Frobenius alg} it follows that
		\begin{align}
			\omega_e\cdot \dif t^\beta = \frac {\kappa}{d_{\beta}}~\nabla_g^{\omega_e}\dif t^\beta=\frac {\kappa}{d_\beta}\nabla^g_{g^\sharp(\omega_e)}\dif t^\beta.
		\end{align}
		Here $\nabla_g$ (resp. $\nabla^g$) is the contravariant (resp. covariant dual) Levi-Civita connection, and $g^\sharp$ is the canonical isomorphism
		\begin{align}
			g^\sharp\colon T^*\mathcal M_D \to  T^{**}\mathcal M_D=T\mathcal M_D,\quad  \omega \mapsto g(\omega,\cdot).
		\end{align}
		So it suffices to show that
		\begin{align}
			\label{tmp unit target}
			\frac {\kappa}{d_\beta}\nabla^g_{g^\sharp(\omega_e)}\dif t^\beta=\dif t^\beta.
		\end{align}
		Since the relation between the coordinates $z^1,\dots, z^\ell$ and $t^1,\dots, t^\ell$ is independent of $\lambda$, in the following proof of the validity of \eqref{tmp unit target} we can assume that $z^i$ and $t^\alpha$ are related with $x^1,\dots,x^\ell$ via the null-transformation $\varphi_0$. Note that the null-transformation $\varphi_0$ transforms the constant metric $a$ to $g$, hence $x^1,\dots,x^\ell$ are the flat coordinates of $g$. From \eqref{mrzr} we obtain
		\begin{align}
			\varphi_0^*\omega_e = -\frac{2}{\kappa}\pi i \dif (\omega, \hat{\mathbf{x}}),
		\end{align}
		so it follows that
		\begin{align*}
			\frac {\kappa}{d_\beta}\nabla^g_{g^\sharp(\omega_e)}\dif t^\beta\bigr|_{\lambda=0} &=-2\pi i\frac 1{d_\beta}\sum_{s=1}^\ell  m_s \nabla^g_{g^\sharp \dif x^s}\left(\left.\pdf{t^\beta}{x^p}\right|_{\lambda = 0}\dif x^p\right)\\
			&=-2\pi i\frac 1{d_\beta}\sum_{r,s=1}^\ell  m_s \frac 1{4\pi^2} a^{rs}\nabla^a_{\tangentvector{x^r}}\left(\left.\pdf{t^\beta}{x^p}\right|_{\lambda = 0}\right)\dif x^p\\
			&=\frac 1{d_\beta}\left(\sum_{r=1}^\ell  \frac{1}{2\pi i}\theta_r\tangentvector{x^r}\left(\left.\pdf{t^\beta}{x^p}\right|_{\lambda = 0}\right)\right)\dif x^p.
		\end{align*}
		Here we used the relation \eqref{theta r} for the third equality. Note that $t^\beta\bigr|_{\lambda = 0}$ is a Fourier polynomial of degree $d_\beta$ which does not depend on $\lambda$, so $\left.\pdf{t^\beta}{x^p}\right|_{\lambda = 0}$ and 
		\begin{align}
			\sum_{r=1}^\ell  \frac 1{2\pi i}~\theta_r\tangentvector{x^r}\left(\left.\pdf{t^\beta}{x^p}\right|_{\lambda = 0}\right) =\left.d_\beta\pdf{t^\beta}{x^p}\right|_{\lambda = 0}.
		\end{align}
		Thus we arrive at
		\begin{align}
			\frac {\kappa}{d_\beta}\nabla^g_{g^\sharp(\omega_e)}\dif t^\beta\bigr|_{\lambda=0} = \left.\pdf{t^\beta}{x^p}\right|_{\lambda = 0}\dif x^p =\dif t^\beta\bigr|_{\lambda = 0}.
		\end{align}
		The proposition is proved.
	\end{proof}
	
	\begin{cor}
		Denote $\omega_e = e_\alpha\dif t^\alpha$, then the following relations hold true:
		\begin{align}
			\label{unit 1}
			e_{\alpha}g^{\alpha\beta} =\frac1{\kappa}d_\beta t^\beta,\quad
			e_{\alpha}\Gamma_\gamma^{\alpha\beta} = \frac1{\kappa}d_\beta\delta_\gamma^\beta.
		\end{align}
	\end{cor}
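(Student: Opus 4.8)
The plan is to read off both relations in \eqref{unit 1} from Proposition \ref{zh-05-23} together with the computations already carried out in its proof, after expressing everything in the flat coordinates $t^1,\dots,t^\ell$ of $\eta$. The second relation is immediate: expanding $\omega_e = e_\alpha\,\dif t^\alpha$ and using the multiplication \eqref{Frobenius alg} gives
\[
\omega_e\cdot\dif t^\beta = e_\alpha\,\dif t^\alpha\cdot\dif t^\beta = \frac{\kappa}{d_\beta}\,e_\alpha\Gamma_\gamma^{\alpha\beta}\,\dif t^\gamma .
\]
By Proposition \ref{zh-05-23} the left-hand side equals $\dif t^\beta$, so comparing the coefficients of $\dif t^\gamma$ yields $\frac{\kappa}{d_\beta}\,e_\alpha\Gamma_\gamma^{\alpha\beta} = \delta_\gamma^\beta$, i.e. $e_\alpha\Gamma_\gamma^{\alpha\beta} = \frac1\kappa\,d_\beta\delta_\gamma^\beta$.

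For the first relation, the key step is the identity of vector fields $g^\sharp(\omega_e) = \frac1\kappa E$ on $\mathcal M$; granting it, writing $g^\sharp(\omega_e) = e_\alpha g^{\alpha\beta}\tangentvector{t^\beta}$ and $E = \sum_\beta d_\beta t^\beta\tangentvector{t^\beta}$ and comparing the coefficients of $\tangentvector{t^\beta}$ gives $e_\alpha g^{\alpha\beta} = \frac1\kappa\,d_\beta t^\beta$. To prove $g^\sharp(\omega_e) = \frac1\kappa E$, I would pull everything back along the null transformation $\varphi_0$ over the nonempty open locus of $V\otimes\mathbb C$ where $\mathbf{J}_0\neq 0$ (nonempty by Corollary \ref{functional independence}), reproducing the computation in the proof of Proposition \ref{zh-05-23}: there $\varphi_0^*g = \frac1{4\pi^2}a$ and, by \eqref{mrzr}, $\varphi_0^*\omega_e = -\frac{2\pi i}{\kappa}\,\dif(\omega,\hat{\bfx}) = -\frac{2\pi i}{\kappa}\sum_r m_r\,\dif x^r$, so that, using $\theta_s = a^{sr}m_r$ from \eqref{theta r},
\[
\varphi_0^*\bigl(g^\sharp(\omega_e)\bigr) = \frac1{4\pi^2}\,a^{sr}\Bigl(-\frac{2\pi i}{\kappa}\,m_r\Bigr)\tangentvector{x^s} = \frac1{2\pi i\,\kappa}\,\theta_s\tangentvector{x^s}.
\]
On the other hand the vector field $\frac1{2\pi i}\sum_s\theta_s\tangentvector{x^s}$ acts on each Fourier polynomial $z^j\big|_{\lambda=0}$, which is homogeneous of degree $\theta_j$ for the grading $\deg\mathrm{e}^{2\pi i x^j} = \theta_j$, as multiplication by $\theta_j$; since $z^1\big|_{\lambda=0},\dots,z^\ell\big|_{\lambda=0}$ are functionally independent (Corollary \ref{functional independence}), a vector field on that locus is determined by its action on them, so $\frac1{2\pi i}\sum_s\theta_s\tangentvector{x^s}$ equals $\varphi_0^*E$. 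Hence $\varphi_0^*\bigl(g^\sharp(\omega_e)\bigr) = \varphi_0^*\bigl(\frac1\kappa E\bigr)$ on this open locus, and since $g^\sharp(\omega_e)$ and $\frac1\kappa E$ are meromorphic vector fields on the connected manifold $\mathcal M$ agreeing on a nonempty open subset, they coincide on all of $\mathcal M$.

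The only delicate point is this last passage from the $\lambda=0$ picture on $V\otimes\mathbb C$ to an identity of objects on $\mathcal M$; it is legitimate because $g$, $\omega_e$, $E$ and the change of coordinates between the $z^j$ and the $t^\alpha$ are all independent of $\lambda$, while $\varphi_0$ restricts to a local biholomorphism onto a nonempty open subset of $\mathcal M$. Apart from this bookkeeping, the corollary is a routine consequence of Proposition \ref{zh-05-23} and the definition \eqref{Frobenius alg}.
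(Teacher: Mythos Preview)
Your proof is correct and follows essentially the same route as the paper: the second relation is read off directly from Proposition~\ref{zh-05-23} and the definition \eqref{Frobenius alg}, while for the first relation both you and the paper pull back along $\varphi_0$, use $\varphi_0^*\omega_e=-\tfrac{2\pi i}{\kappa}\,\dif(\omega,\hat{\bfx})$ together with $\theta_s=a^{sr}m_r$, and then invoke Euler-type homogeneity of the Fourier polynomials. The only cosmetic difference is that you package the first computation as the vector-field identity $g^\sharp(\omega_e)=\tfrac1\kappa E$ (checked on the $z^j|_{\lambda=0}$), whereas the paper evaluates $g(\omega_e,\dif t^\beta)$ directly using the homogeneity of $t^\beta|_{\lambda=0}$; the content is the same.
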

	\begin{proof}
		The second relation of \eqref{unit 1} follows immediately from Proposition \ref{zh-05-23}. To prove the first relation of \eqref{unit 1} we can assume, as in the proof of Proposition \ref{zh-05-23}, that the coordinates $t^1,\dots, t^\ell$ and $x^1,\dots,x^\ell$ are related via the null-transformation $\varphi_0$. Thus we have
		\begin{align*}
			e_\alpha g^{\alpha\beta} &=g(\omega_e, \dif t^\beta) = -\frac{2}{\kappa}\pi i g(\dif (\omega, \hat{\mathbf{x}}), \dif t^\beta)\\
			&=\frac {\kappa}{2\pi i}\sum_{s = 1}^\ell   m_s (\dif x^r, \dif x^s)\pdf{t^\beta}{x^r}
			=\frac1{\kappa}\sum_{s = 1}^\ell \frac{\theta_s}{2\pi i}\pdf{t^\beta}{x^s} =d_\beta t^\beta.
		\end{align*}
		The corollary is proved.
	\end{proof}
	\begin{prop}[\cite{2d-tft}, Appendix D]
		The Christoffel symbols $\Gamma_{\gamma}^{\alpha\beta}$ are uniquely determined by the metric $g$ via the relations
		\begin{align}
			\pdf{g^{\alpha\beta}}{t^\gamma} =\Gamma_{\gamma}^{\alpha\beta} + \Gamma_{\gamma}^{\beta\alpha},\quad
			g^{\alpha\epsilon}\Gamma_\epsilon^{\beta\gamma} =g^{\beta\epsilon}\Gamma_\epsilon^{\alpha\gamma}.\label{zh-05-23b}
		\end{align}
		Moreover, we have the following properties for the flat pencil of metrics:
		\begin{align}
			\eta^{\alpha\epsilon}\Gamma_\epsilon^{\beta\gamma} =\eta^{\beta\epsilon}\Gamma_\epsilon^{\alpha\gamma},\quad
			\pdf{\Gamma_{\gamma}^{\alpha\beta}}{t^\delta} =\pdf{\Gamma_{\delta}^{\alpha\beta}}{t^\gamma},\quad
			\Gamma_{\epsilon}^{\alpha\beta}\Gamma^{\epsilon\gamma}_{\delta} =\Gamma_{\epsilon}^{\alpha\gamma}\Gamma^{\epsilon\beta}_{\delta}.\label{zh-05-24}
		\end{align}
	\end{prop}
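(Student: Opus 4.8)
The plan is to read this proposition as Dubrovin's lemma on flat pencils of metrics (\cite{2d-tft}, Appendix D; see also \cite{Du-1998}): the two relations in \eqref{zh-05-23b} merely say that $\Gamma_\gamma^{\alpha\beta}$ are the contravariant components of the Levi-Civita connection of $g$, which forces uniqueness, while the three relations in \eqref{zh-05-24} follow from the flatness of the whole pencil $g_\lambda = g + \lambda\eta$ together with the linearity \eqref{zzh-7}, by expanding in powers of $\lambda$.

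For the first part I would start from the compatibility of $g$ with its Levi-Civita connection, written for the contravariant metric as $\partial_\gamma g^{\alpha\beta} + \Gamma_{\gamma\epsilon}^\alpha g^{\epsilon\beta} + \Gamma_{\gamma\epsilon}^\beta g^{\epsilon\alpha} = 0$; substituting the definition $\Gamma_\gamma^{\alpha\beta} = -g^{\alpha\epsilon}\Gamma_{\epsilon\gamma}^\beta$ and using the symmetry $\Gamma_{\epsilon\gamma}^\beta = \Gamma_{\gamma\epsilon}^\beta$ gives the first relation $\partial_\gamma g^{\alpha\beta} = \Gamma_\gamma^{\alpha\beta} + \Gamma_\gamma^{\beta\alpha}$. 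That same symmetry shows $g^{\alpha\epsilon}\Gamma_\epsilon^{\beta\gamma} = -g^{\alpha\epsilon}g^{\beta\rho}\Gamma_{\rho\epsilon}^\gamma$ is symmetric under $\alpha\leftrightarrow\beta$, which is the second relation. Conversely, lowering both upper indices turns these two relations into the standard equations that determine the Levi-Civita connection (equivalently the Koszul formula), so $\Gamma_\gamma^{\alpha\beta}$ is uniquely determined by $g$.

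For the "moreover" part I would use that in the flat coordinates $t^1,\dots,t^\ell$ of $\eta$ one has $\eta^{\alpha\beta}$ constant, hence $\Gamma_{\eta,\gamma}^{\alpha\beta} = 0$, so by \eqref{zzh-7} the same functions $\Gamma_\gamma^{\alpha\beta}$ serve as the contravariant Christoffel symbols of $g_\lambda = g + \lambda\eta$ for every $\lambda$, as already noted before the statement. Applying the second relation of the first part with $g$ replaced by $g_\lambda$ gives $(g^{\alpha\epsilon} + \lambda\eta^{\alpha\epsilon})\Gamma_\epsilon^{\beta\gamma} = (g^{\beta\epsilon} + \lambda\eta^{\beta\epsilon})\Gamma_\epsilon^{\alpha\gamma}$ for all $\lambda$, whose $\lambda^1$-part is precisely $\eta^{\alpha\epsilon}\Gamma_\epsilon^{\beta\gamma} = \eta^{\beta\epsilon}\Gamma_\epsilon^{\alpha\gamma}$. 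For the remaining two relations I would write the vanishing of the Riemann curvature of $g_\lambda$ in contravariant form,
\[ g_\lambda^{\alpha\epsilon}\bigl(\partial_\epsilon\Gamma_\delta^{\beta\gamma} - \partial_\delta\Gamma_\epsilon^{\beta\gamma}\bigr) + \Gamma_\epsilon^{\alpha\beta}\Gamma_\delta^{\epsilon\gamma} - \Gamma_\epsilon^{\alpha\gamma}\Gamma_\delta^{\epsilon\beta} = 0, \]
in which $\Gamma$ does not depend on $\lambda$ while $g_\lambda = g + \lambda\eta$ is affine in $\lambda$; since $\eta$ is non-degenerate at generic points, the $\lambda^1$-part forces $\partial_\epsilon\Gamma_\delta^{\beta\gamma} = \partial_\delta\Gamma_\epsilon^{\beta\gamma}$, i.e., after relabeling, $\partial_\delta\Gamma_\gamma^{\alpha\beta} = \partial_\gamma\Gamma_\delta^{\alpha\beta}$, and then the $\lambda^0$-part collapses to $\Gamma_\epsilon^{\alpha\beta}\Gamma_\delta^{\epsilon\gamma} = \Gamma_\epsilon^{\alpha\gamma}\Gamma_\delta^{\epsilon\beta}$.

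The only genuinely delicate step is establishing the displayed contravariant form of the zero-curvature equation with the correct index positions and signs, i.e., translating $R^i_{jkl} = 0$ through $\Gamma_{\epsilon\gamma}^\beta = -g_{\epsilon\mu}\Gamma_\gamma^{\mu\beta}$ and the relations already proved in the first part; everything afterwards is substitution of $g_\lambda$ and comparison of coefficients of $\lambda$. Since the statement is a standard fact about flat pencils of metrics, one could alternatively simply invoke \cite{2d-tft}, Appendix D, and \cite{Du-1998}.
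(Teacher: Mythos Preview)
The paper does not supply its own proof of this proposition; it simply cites \cite{2d-tft}, Appendix D, and proceeds to use the result. Your proposal correctly identifies this and then goes further by sketching the actual argument behind that citation: deriving \eqref{zh-05-23b} from metric compatibility plus torsion-freeness of the Levi-Civita connection in contravariant form, and deriving \eqref{zh-05-24} by writing the corresponding identities for $g_\lambda = g + \lambda\eta$ (using that $\Gamma_{\eta,\gamma}^{\alpha\beta}=0$ in the flat coordinates of $\eta$, so the same $\Gamma_\gamma^{\alpha\beta}$ works for every $\lambda$) and reading off the $\lambda$-coefficients of the resulting polynomial identities. This is exactly Dubrovin's argument, and your sketch is sound; the only point requiring care, as you note, is getting the contravariant curvature identity with the right index placement, which is a bookkeeping exercise once one systematically lowers and raises indices via $g_\lambda$. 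So your proposal is correct and, in fact, more detailed than what the paper itself offers.
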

	It follows from the first two relations of \eqref{zh-05-24} that there exist functions $f^\beta$ such that
	\begin{align}
		\label{FP6}
		\eta^{\alpha\epsilon}\pdff{f^\beta}{t^\epsilon}{t^\gamma}=\Gamma^{\alpha\beta}_\gamma,
	\end{align}
	where $\deg f^\beta = \kappa + d_\beta$ . 
	
	\begin{lem}\label{zh-05-28b}
		The Christoffel symbols satisfy the following relations:
		\begin{align}
			\label{key formula}
			\Gamma_{\gamma}^{\alpha\beta} = \frac {d_\beta}{d_\alpha + d_\beta}\pdf{g^{\alpha\beta}}{t^\gamma}.
		\end{align}
		\begin{proof}
			By multiplying $e_\gamma$ on both sides of the second relation of \eqref{zh-05-23b} and by using \eqref{unit 1} we obtain
			\begin{align}
				\label{tmp E Gamma}
				\sum_{\epsilon} d_\epsilon t^\epsilon \Gamma_\epsilon^{\alpha\beta} = d_\beta g^{\alpha\beta},
			\end{align}
			so it  follows from \eqref{FP6} that
			\begin{align}
				\sum_{\epsilon} d_\epsilon t^\epsilon \tangentvector{t^\epsilon}\left(\eta^{\alpha\nu}\pdf{f^\beta}{t^{\nu}}\right) = (d_\beta + d_\alpha)\eta^{\alpha\nu}\pdf{f^\beta}{t^{\nu}} = d_\beta g^{\alpha\beta}.
			\end{align}
			Thus we have
			\begin{align}
				(d_\alpha + d_\beta)\eta^{\alpha\nu}\pdff{f^\beta}{t^{\nu}}{t^\gamma} =d_\beta \pdf{g^{\alpha\beta}}{t^\gamma},
			\end{align}
			which leads to the validity of \eqref{key formula}. The lemma is proved.
		\end{proof}
	\end{lem}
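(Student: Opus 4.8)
The plan is to derive \eqref{key formula} purely from the structural identities already established: the symmetry relation $g^{\alpha\epsilon}\Gamma_\epsilon^{\beta\gamma}=g^{\beta\epsilon}\Gamma_\epsilon^{\alpha\gamma}$ in \eqref{zh-05-23b}, the unity relations \eqref{unit 1}, the existence of the potentials $f^\gamma$ in \eqref{FP6}, and homogeneity with respect to the Euler field $E=\sum_\alpha d_\alpha t^\alpha\tangentvector{t^\alpha}$.

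First I would contract the second identity in \eqref{zh-05-23b} with the components $e_\alpha$ of the unity one-form $\omega_e=e_\alpha\dif t^\alpha$. Using $e_\alpha g^{\alpha\epsilon}=\tfrac1\kappa d_\epsilon t^\epsilon$ on the left-hand side and $e_\alpha\Gamma_\epsilon^{\alpha\gamma}=\tfrac1\kappa d_\gamma\delta_\epsilon^\gamma$ on the right-hand side (both from \eqref{unit 1}), the contracted relation collapses to
\begin{align*}
\sum_{\epsilon=1}^\ell d_\epsilon t^\epsilon\,\Gamma_\epsilon^{\beta\gamma}=d_\gamma\,g^{\beta\gamma},
\end{align*}
that is, the Euler operator $\mathcal L_E$ applied to the quantities appearing on the potential side of \eqref{FP6}.

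Next I would substitute $\Gamma_\epsilon^{\beta\gamma}=\eta^{\beta\nu}\pdff{f^\gamma}{t^\nu}{t^\epsilon}$ from \eqref{FP6}, rewriting the left-hand side above as $\mathcal L_E\!\left(\eta^{\beta\nu}\pdf{f^\gamma}{t^\nu}\right)$. Since $\eta^{\beta\nu}$ is a nonzero constant only for $\nu=\beta^*$ with $d_{\beta^*}=\kappa-d_\beta$, and since $\deg f^\gamma=\kappa+d_\gamma$, the expression $\eta^{\beta\nu}\pdf{f^\gamma}{t^\nu}$ is homogeneous of degree $d_\beta+d_\gamma$; hence $\mathcal L_E$ multiplies it by $d_\beta+d_\gamma$, giving
\begin{align*}
(d_\beta+d_\gamma)\,\eta^{\beta\nu}\pdf{f^\gamma}{t^\nu}=d_\gamma\,g^{\beta\gamma}.
\end{align*}
Differentiating in $t^\alpha$ and invoking \eqref{FP6} once more to recognize $\eta^{\beta\nu}\pdff{f^\gamma}{t^\nu}{t^\alpha}=\Gamma_\alpha^{\beta\gamma}$ yields $(d_\beta+d_\gamma)\Gamma_\alpha^{\beta\gamma}=d_\gamma\pdf{g^{\beta\gamma}}{t^\alpha}$; dividing by $d_\beta+d_\gamma$, which is legitimate because all $d_\alpha$ are positive by Remark \ref{zh-06-02b}, produces \eqref{key formula}.

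The routine part is the index bookkeeping in the two contractions. The one step that genuinely requires care is the homogeneity count for $\eta^{\beta\nu}\pdf{f^\gamma}{t^\nu}$: one must use that $\eta$ is constant and supported in the single slot $\nu=\beta^*$ with $d_{\beta^*}=\kappa-d_\beta$ so that the degree comes out exactly $d_\beta+d_\gamma$, matching the degree of $g^{\beta\gamma}$ on the right and making the application of $\mathcal L_E$ unambiguous. Everything else is direct substitution.
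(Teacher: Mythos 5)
Your proposal is correct and follows essentially the same route as the paper: contract the symmetry relation $g^{\alpha\epsilon}\Gamma_\epsilon^{\beta\gamma}=g^{\beta\epsilon}\Gamma_\epsilon^{\alpha\gamma}$ with $e_\alpha$ via \eqref{unit 1} to get $\sum_\epsilon d_\epsilon t^\epsilon\Gamma_\epsilon^{\beta\gamma}=d_\gamma g^{\beta\gamma}$, then use the potentials $f^\gamma$ of \eqref{FP6} together with the Euler homogeneity of $\eta^{\beta\nu}\pdf{f^\gamma}{t^\nu}$ and differentiate in $t^\alpha$. Your explicit degree count for $\eta^{\beta\nu}\pdf{f^\gamma}{t^\nu}$ and the remark that $d_\beta+d_\gamma>0$ justifies the final division are fine refinements of details the paper leaves implicit.
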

	
	\begin{thm}
		The algebra defined by the multiplication \eqref{Frobenius alg} on $T^*\mathcal{M}_D$ is a Frobenius algebra with the bilinear form $\langle\cdot\,,\cdot\rangle$ given by the flat metric $\eta$ and with the unity \eqref{zh-05-24b}.
	\end{thm}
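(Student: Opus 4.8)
The plan is to check, directly from the structure constants $c^{\alpha\beta}_\gamma=\frac{\kappa}{d_\beta}\Gamma^{\alpha\beta}_\gamma$ read off from \eqref{Frobenius alg}, the four defining properties of a Frobenius algebra: commutativity, associativity, the presence of a two-sided unity, and the invariance condition $\langle a\cdot b,c\rangle=\langle a,b\cdot c\rangle$ with respect to the bilinear form $\langle\cdot\,,\cdot\rangle$ induced by $\eta$. The pivot of the whole argument is the key formula \eqref{key formula} of Lemma \ref{zh-05-28b}: substituting it into the definition gives
\[
c^{\alpha\beta}_\gamma=\frac{\kappa}{d_\beta}\,\Gamma^{\alpha\beta}_\gamma=\frac{\kappa}{d_\alpha+d_\beta}\,\pdf{g^{\alpha\beta}}{t^\gamma},
\]
which is manifestly symmetric under $\alpha\leftrightarrow\beta$ since $g^{\alpha\beta}$ is. This disposes of commutativity at once, and it is this symmetric form of the structure constants that makes the remaining verifications mechanical.

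For the unity I invoke Proposition \ref{zh-05-23}, which already shows that $\omega_e=-\sum_r\frac1\kappa m_r\,\dif\log z^r$ is a \emph{left} unity of the multiplication; by the commutativity just established it is automatically a two-sided unity, and $\langle\cdot\,,\cdot\rangle$ is by construction the nondegenerate flat metric $\eta$. For the invariance condition I unwind both sides to $\langle\dif t^\alpha\cdot\dif t^\beta,\dif t^\gamma\rangle=c^{\alpha\beta}_\delta\eta^{\delta\gamma}$ and $\langle\dif t^\alpha,\dif t^\beta\cdot\dif t^\gamma\rangle=c^{\beta\gamma}_\delta\eta^{\alpha\delta}$, so it is enough to prove that $c^{\alpha\beta}_\delta\eta^{\delta\gamma}$ is totally symmetric. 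Symmetry in $\alpha\leftrightarrow\beta$ is commutativity; symmetry in $\alpha\leftrightarrow\gamma$ follows by rewriting $c^{\alpha\beta}_\delta\eta^{\delta\gamma}=\tfrac{\kappa}{d_\beta}\Gamma^{\alpha\beta}_\delta\eta^{\delta\gamma}$ and applying the first identity of \eqref{zh-05-24} in the form $\eta^{\gamma\delta}\Gamma^{\alpha\beta}_\delta=\eta^{\alpha\delta}\Gamma^{\gamma\beta}_\delta$, which turns it into $\tfrac{\kappa}{d_\beta}\eta^{\alpha\delta}\Gamma^{\gamma\beta}_\delta=c^{\gamma\beta}_\delta\eta^{\delta\alpha}$. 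Together these give full symmetry, hence the invariance identity.

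Associativity reduces, using commutativity, to $c^{\alpha\beta}_\epsilon c^{\epsilon\gamma}_\delta=c^{\alpha\gamma}_\epsilon c^{\epsilon\beta}_\delta$. Writing $c^{\alpha\beta}_\epsilon c^{\epsilon\gamma}_\delta=\frac{\kappa^2}{d_\beta d_\gamma}\,\Gamma^{\alpha\beta}_\epsilon\Gamma^{\epsilon\gamma}_\delta$ — the scalar factors $\kappa/d_\beta$, $\kappa/d_\gamma$ do not involve the summation index $\epsilon$ and may be pulled outside — the third identity of \eqref{zh-05-24}, $\Gamma^{\alpha\beta}_\epsilon\Gamma^{\epsilon\gamma}_\delta=\Gamma^{\alpha\gamma}_\epsilon\Gamma^{\epsilon\beta}_\delta$, converts this into $\frac{\kappa^2}{d_\beta d_\gamma}\Gamma^{\alpha\gamma}_\epsilon\Gamma^{\epsilon\beta}_\delta=c^{\alpha\gamma}_\epsilon c^{\epsilon\beta}_\delta$, which is exactly what is needed. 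This completes all four checks.

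I expect no real obstacle in the theorem itself: every step is a one-line manipulation once Lemma \ref{zh-05-28b} and the flat-pencil identities of \eqref{zh-05-24} are available. The genuine work has already been done upstream — the key formula \eqref{key formula} depends on the unity relations \eqref{unit 1}, which rely on the identification \eqref{mrzr} of the null transformation and on the pencil condition \eqref{zzh-7} guaranteeing that the connection coefficients $\Gamma^{\alpha\beta}_\gamma$ are shared by all metrics $g+\lambda\eta$. The proof of this theorem is therefore just the final assembly of those ingredients, and the only point requiring care is keeping the index bookkeeping in the invariance and associativity computations straight.
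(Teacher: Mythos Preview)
Your proof is correct and follows essentially the same route as the paper's own argument: commutativity via the key formula \eqref{key formula}, associativity via the third identity of \eqref{zh-05-24}, invariance via the first identity of \eqref{zh-05-24}, and the unity via Proposition \ref{zh-05-23}. The only cosmetic difference is that the paper writes the associativity chain as $(\nd t^\alpha\cdot\nd t^\beta)\cdot\nd t^\gamma=(\nd t^\beta\cdot\nd t^\gamma)\cdot\nd t^\alpha$ directly, while you phrase it as the $\beta\leftrightarrow\gamma$ symmetry $c^{\alpha\beta}_\epsilon c^{\epsilon\gamma}_\delta=c^{\alpha\gamma}_\epsilon c^{\epsilon\beta}_\delta$; both reductions are equivalent under commutativity.
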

	\begin{proof}
		We need to show that the algebra is commutative and associative, and the operation of multiplication is compatible with the bilinear form. From \eqref{key formula} it follows that
		\begin{align}
			\label{integrable 1}
			\frac1{d_\gamma}\Gamma^{\beta\gamma}_\alpha=\frac1{d_\beta}\Gamma^{\gamma\beta}_\alpha,
		\end{align}
		which leads to the commutativity of the algebra. Now by using the third relation of \eqref{zh-05-24} we have
		\begin{align*}
			&\left(\nd t^\al\cdot \nd t^\beta\right)\cdot \nd t^\gamma=
			\frac{\kappa}{d_\beta}\Gamma^{\al\beta}_\xi \nd t^\xi\cdot \nd t^\gamma
			=\frac{\kappa}{d_\beta}\frac{\kappa}{d_\gamma}\Gamma^{\al\beta}_\xi \Gamma^{\xi\gamma}_\zeta\nd t^\zeta\\
			&\quad =\frac{\kappa}{d_\al}\frac{\kappa}{d_\gamma}\Gamma^{\beta\al}_\xi \Gamma^{\xi\gamma}_\zeta\nd t^\zeta=\frac{\kappa}{d_\al}\frac{\kappa}{d_\gamma}\Gamma^{\beta\gamma}_\xi \Gamma^{\xi\al}_\zeta\nd t^\zeta\\
			&\quad =\left(\nd t^\beta\cdot\nd t^\gamma\right)\cdot\nd t^\al
			=\nd t^\al\cdot\left(\nd t^\beta\cdot\nd t^\gamma\right),
		\end{align*}
		so the associativity of the algebra holds true. Finally, from the first relation of \eqref{zh-05-24} we obtain 
		\begin{align*}
			\langle\nd t^\al\cdot\nd t^\beta,\nd t^\gamma\rangle
			=\frac{\kappa}{d_\beta}\eta^{\gamma\xi}\Gamma^{\al\beta}_\xi 
			=\frac{\kappa}{d_\beta}\eta^{\al\xi}\Gamma^{\gamma\beta}_\xi 
			=\frac{\kappa}{d_\gamma}\eta^{\al\xi}\Gamma^{\beta\gamma}_\xi 
			=\langle\nd t^\al,\nd t^\beta\cdot\nd t^\gamma\rangle,
		\end{align*}
		which verifies the compatibility of the multiplication with the bilinear form. 
		The proposition is proved.
	\end{proof}

	The Frobenius algebra on $T^*\mathcal{M}_D$ induces a Frobenius algebra structure on $T\mathcal M_D$ via the isomorphism
	\begin{align}
		\eta^\sharp\colon T^*\mathcal M_D \to  T\mathcal M_D,\quad \omega \mapsto \eta(\omega,\cdot),
	\end{align}
	where $D$ is define by \eqref{zh-05-25a}. 
	Explicitly, the Frobenius algebra structure $(\cdot, \langle\cdot\,,\cdot\rangle, e)$ on $T\mathcal M_D$ is defined by
	\begin{align}
		\label{Frobenius alg on TM}
		X \cdot Y = \eta^\sharp\left(\eta^\flat(X)\cdot\eta^\flat(Y)\right),\quad X, Y\in \rm{Vect}(\mathcal{M}_D),
	\end{align}
	with the bilinear form 
	\begin{align}
		\langle X, Y\rangle=\eta^{-1}(X,Y)=\eta(\eta^\flat(X), \eta^\flat(Y))
	\end{align}
	and the unity 
	\begin{align}
		\label{unit}
		e=\eta^\sharp(\omega_e) = -\frac 1\kappa\mathrm{grad}_\eta \sum_{r=1}^{\ell}m_r \log z^r,
	\end{align}
	where $\eta^\flat$ is the inverse map of $\eta^\sharp$. From \eqref{Euler},
	\eqref{zh-05-25b} and \eqref{zh-05-24b} it follows that
	\[\mathcal{L}_E e=-\kappa e.\]
	
	\subsection{The generalized Frobenius manifold structure on $\mathcal M_D$}
	A generalized Frobenius manifold extends Dubrovin's original definition of a Frobenius manifold \cite{2d-tft}; it satisfies all axioms of the definition of a Frobenius manifold but the flatness
	of the unit vector field \cite{gfm}. 
	
	\begin{defn}[\cite{2d-tft, gfm}]
		A (complex) generalized Frobenius manifold (GFM) is a complex manifold $M$ endowed on each of its tangent spaces $T_pM$ a Frobenius algebra $A_p = (\cdot,\langle \cdot\,,\cdot\rangle ,e)$ which depends holomorphically on $p$,
		here $\cdot$ is the operation of multiplication of the algebra, $\langle \cdot\,,\cdot\rangle$ is a symmetric non-degenerate bilinear form which is compatible with the operation of multiplication, and $e$ is the unity. It is required to satisfy the following axioms:
		\begin{enumerate}[FM1.]
			\item The bilinear form $\langle \cdot\,,\cdot\rangle $ yields a flat metric on $M$.
			\item Define a $3$-tensor $c$ by $c(u,v,w) = \langle u\cdot v,w\rangle $, then the $4$-tensor
			\begin{align}
				\nabla c:(W,X,Y,Z) \to (\nabla_W c)(X,Y,Z)
			\end{align}    
			is symmetric w.r.t. $W,X,Y,Z \in \rm{Vect}(M)$, where $\nabla$ is the Levi-Civita connection of the flat metric.
			\item There exists an Euler vector field $E$ on $M$ such that $\nabla\nabla E = 0$, $\nabla E$ is diagonalizable and
			\begin{align}
				\label{Euler 1}
				\mathcal L_E(X\cdot Y)&= (\mathcal L_E X)\cdot Y + X\cdot(\mathcal L_E Y) + X\cdot Y,\\
				\label{Euler 2}
				\mathcal L_E\langle X,Y\rangle &=\langle \mathcal L_E X, Y\rangle  + \langle X, \mathcal L_E Y\rangle  + (2-d) \langle X,Y\rangle.
			\end{align}
			here $d$ is a certain constant called the charge of $M$.
		\end{enumerate}
	\end{defn}
	
	\begin{lem}
		The Frobenius algebra on $T\mathcal{M}_D$ defined in the last subsection yields a generalized Frobenius manifold structure on $\mathcal{M}_D$
		with the Euler vector field 
		\begin{equation}\label{zh-05-28a}
			\tilde{E}=\frac1{\kappa}E=\sum_{\al=1}^\ell \frac{d_\al t^\al}{\kappa}\frac{\p}{\p t^\al}
		\end{equation}
		and with charge $d=1$.
	\end{lem}
	\begin{proof}
		The validity of the axiom \textit{FM1} follows from the flatness of the metric $\eta$, and that of the axiom \textit{FM2} follows from the relation \eqref{integrable 1} and the second relation of \eqref{zh-05-24}. To check the quasi-homogeneity of the Frobenius manifold structure required by the axiom \textit{FM3}, we need to verify equations \eqref{Euler 1} and \eqref{Euler 2}
		which can be represented in the flat coordinates $t^1,\dots, t^\ell$ as follows:
		\begin{align}
			\p_{\tilde{E}} c_{\al\beta}^\gamma&=\left(1-\frac{d_\al}{\kappa}-\frac{d_\beta}{\kappa}+\frac{d_\gamma}{\kappa}\right) c_{\al\beta}^\gamma,\label{zh-05-26a}\\
			0&=\left(1-\frac{d_\al}{\kappa}-\frac{d_\beta}{\kappa}\right)\eta_{\al\beta}.\label{zh-05-26b}
		\end{align}
		Here 
		\begin{equation}\label{zh-06-02e}
			c_{\al\beta}^\gamma=\frac{\kappa}{d_\rho}\eta_{\al\nu}
			\eta_{\beta\rho}\eta^{\gamma\zeta}\Gamma^{\nu\rho}_\zeta
		\end{equation}
		are the structure constants of the Frobenius algebra
		\begin{equation}
			\frac{\p}{\p t^\al}\cdot\frac{\p}{\p t^\beta}=c_{\al\beta}^\gamma\frac{\p}{\p t^\gamma},
		\end{equation}
		and $(\eta_{\al\beta})=(\eta^{\al\beta})^{-1}$ with $\eta^{\al\beta}=\eta(\nd t^\al,\nd t^\beta)$. The validity of the identities follows from \eqref{zh-05-25b} and the fact that
		$\deg\Gamma^{\al\beta}_\gamma=d_\al+d_\beta-d_\gamma$.
	\end{proof}
	Now we give the main theorem of this paper.
	
	
	\begin{thm}[Main]
		\label{main}
		Suppose $\{z^1,\dots,z^\ell\}$ be a set of pencil generators associated with an irreducible reduced root system $R$ and a fixed weight $\omega$, and the vector field $E$ given by \eqref{Euler} is diagonalizable, then there exists a generalized Frobenius manifold structure of charge $d=1$ on $\mathcal M_D$, of which the flat metric is given by $\eta$ and the multiplication is defined by \eqref{Frobenius alg on TM}. Moreover, 
		the unit vector field $e$ and the Euler vector field $\tilde E$ of this generalized Frobenius are given by \eqref{unit} and \eqref{zh-05-28a} respectively, and the intersection form coincides with $g$.
	\end{thm}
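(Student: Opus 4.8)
The plan is to assemble the facts already established in this section and to verify the three axioms \textit{FM1}--\textit{FM3} for the fiberwise Frobenius algebra on $T(\mathcal M\setminus D)$ defined by \eqref{Frobenius alg on TM} (its fiberwise Frobenius-algebra property having been established above), after which the unit vector field, the Euler vector field, the charge, and the intersection form have to be identified. Since $\eta$ is one of the two metrics of the flat pencil $g,\eta$ produced by the pencil generators, \textit{FM1} is immediate. For \textit{FM2} one writes the $3$-tensor $c(u,v,w)=\langle u\cdot v,w\rangle$ in the flat coordinates $t^1,\dots,t^\ell$ of $\eta$; it is totally symmetric in its three arguments by the commutativity \eqref{integrable 1} together with the compatibility of $\langle\cdot\,,\cdot\rangle$ with the multiplication, and the total symmetry of the $4$-tensor $\nabla c$ then follows from $\partial_\delta\Gamma^{\alpha\beta}_\gamma=\partial_\gamma\Gamma^{\alpha\beta}_\delta$ (the second identity of \eqref{zh-05-24}) and \eqref{key formula}, exactly as in the standard flat-pencil argument.

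For \textit{FM3} I would take $\tilde E=\tfrac1\kappa E=\sum_\alpha\tfrac{d_\alpha}{\kappa}t^\alpha\tangentvector{t^\alpha}$. In the flat coordinates the components of $\nabla^\eta\tilde E$ are the constants $d_\alpha/\kappa$, hence $\nabla\nabla\tilde E=0$, and $\nabla\tilde E$ is diagonal, so diagonalizable. The structure constants $c^\gamma_{\alpha\beta}$ of \eqref{zh-06-02e} and the constants $\eta_{\alpha\beta}$ are homogeneous of degrees $\kappa+d_\gamma-d_\alpha-d_\beta$ and $\kappa-d_\alpha-d_\beta$ respectively, using $\deg\eta^{\alpha\beta}=d_\alpha+d_\beta-\kappa$ from \eqref{zh-05-25b} and $\deg\Gamma^{\alpha\beta}_\gamma=d_\alpha+d_\beta-d_\gamma$; this yields precisely the identities \eqref{zh-05-26a} and \eqref{zh-05-26b}, and rewriting them in the invariant form \eqref{Euler 1}--\eqref{Euler 2} shows that \textit{FM3} holds with charge $d=1$.

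It remains to pin down the remaining data. By Proposition \ref{zh-05-23} the $1$-form $\omega_e$ of \eqref{zh-05-24b} is a left unity of the dual algebra \eqref{Frobenius alg} on $T^*\mathcal M$, and by the commutativity \eqref{integrable 1} it is in fact a two-sided unity; hence $e=\eta^\sharp(\omega_e)$ is the unity of \eqref{Frobenius alg on TM}, and \eqref{zh-05-25b} together with \eqref{zh-05-24b} give $\mathcal L_E e=-\kappa e$, so $\mathcal L_{\tilde E}e=-e$. In particular $e$ is not $\nabla^\eta$-parallel, which is precisely why $\mathcal M\setminus D$ carries a generalized, rather than an ordinary, Frobenius manifold structure. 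Finally, the intersection form is obtained by contracting the dual product \eqref{Frobenius alg} with $\tilde E$: pairing $\dif t^\gamma$ with $\tilde E$ gives $\tfrac1\kappa d_\gamma t^\gamma$, so by \eqref{tmp E Gamma} one obtains $\tfrac{\kappa}{d_\beta}\Gamma^{\alpha\beta}_\gamma\cdot\tfrac1\kappa d_\gamma t^\gamma=g^{\alpha\beta}$, i.e.\ the intersection form coincides with $g=g_0$.

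Essentially every step is a repackaging of results already proved, so I expect no genuine obstacle; the only places needing a little fresh care are carrying the normalization factor $\kappa$ through \textit{FM3} so that the charge is exactly $1$ and $\mathcal L_{\tilde E}e=-e$ comes out consistently, and the contraction with $\tilde E$ that identifies $g$ with the intersection form.
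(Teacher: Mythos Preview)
Your proposal is correct and follows essentially the same route as the paper: the verifications of \textit{FM1}--\textit{FM3}, the identification of $e$ and $\tilde E$, and the charge $d=1$ are exactly the arguments given in the text preceding the theorem, and the paper's formal proof reduces, as yours does, to checking $i_{\tilde E}(\dif t^\alpha\cdot\dif t^\beta)=g^{\alpha\beta}$. The only cosmetic difference is that for this last step the paper invokes \eqref{key formula} and the homogeneity $\deg g^{\alpha\beta}=d_\alpha+d_\beta$, whereas you cite \eqref{tmp E Gamma} directly; these are the same computation.
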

	\begin{proof}
		We only need to show that the intersection form of the generalized Frobenius manifold $\mathcal M_D$ coincides with $g$, i.e.,
		\begin{align}
			\label{intersection form}
			g(\nd t^\alpha, \nd t^\beta)= i_{\tilde{E}}(\nd t^\alpha\cdot\nd t^\beta).
		\end{align}
		Indeed, by using Lemma \ref{zh-05-28b} we obtain
		\begin{align}
			i_{\tilde E}(\dif t^\alpha\cdot\dif t^\beta)=\frac {\kappa \tilde E^\gamma}{d_\beta}\Gamma^{\alpha\beta}_\gamma=\sum_\gamma\frac{d_\gamma t^\gamma}{d_\beta}\Gamma^{\alpha\beta}_\gamma=\sum_\gamma \frac{d_\gamma t^\gamma}{d_\alpha+d_\beta}\pdf{g^{\alpha\beta}}{t^\gamma}=g^{\al\beta}.
		\end{align}
		Here we used the fact that $\deg g^{\alpha\beta} = d_\alpha+d_\beta$.
		The theorem is proved.
	\end{proof}
	
	Since the generalized Frobenius manifold structure of $\mathcal M_D$ is constructed from the root system $R$, the weight $\omega$ and the pencil generators $z^1,\dots,z^\ell $, we denote this generalized Frobenius manifold by
	\begin{align*}
		\mathcal M_D =\mathcal M(R, \omega,\{z^1,\dots,z^\ell \}). 
	\end{align*}
	If there is no ambiguity in the selection of the pencil generators, we simply write it as
	\begin{align*}
		\mathcal M_D =\mathcal M(R, \omega).
	\end{align*}
	
	\section{Polynomiality Property and the Monodromy Groups}
	\label{Polynomial Property and Monodromy Groups}
	In this section, we consider the monodromy group of the generalized Frobenius manifold $\mathcal{M}_D=\mathcal M(R, \omega)$. Note that the monodromy groups of the Frobenius manifold structures that are constructed in \cite{2d-tft, DZ1998} on the orbit spaces of the Coxeter groups and the extended affine Weyl groups coincide with the underlined groups. In the present case the monodromy group $\mathrm{Mono}(\mathcal{M}_D)$ can be embedded into the affine Weyl group $W_a(R)$. More precisely, we will show that the image of this embedding is
	\begin{align}
		\mathrm{Mono}(\mathcal{M}_D) ~\cong~\mathrm{Stab}_{W}(\omega) \ltimes \mathbb Z^\ell,
	\end{align}
	where 
	\begin{align}\label{stab}
		\mathrm{Stab}_{W}(\omega)=\{\sigma \in W(R)\mid\sigma(\omega) = \omega\}
	\end{align}
	is a parabolic subgroup of $W(R)$.
	Moreover, we will construct another generalized Frobenius manifold $\widehat{\mathcal M}_D$ as a covering space of $\mathcal M_D$, on which the flat coordinates $t^1,\dots,t^\ell$ are globally defined, and show that its monodromy group is a subgroup of $\mathrm{Mono}(\mathcal{M}_D)$. To this end, we first need to prove Theorem \ref{z is polynomial of t} by using properties of the periods of $\mathcal{M}_D$,  which states that for the set $\{z^1,\dots,z^\ell\}$ of pencil generators the functions $z^i$ are polynomials of the flat coordinates $t^1,\dots,t^\ell$. 
	
	\subsection{Periods of the generalized Frobenius manifolds}
	Let us consider flat coordinates $\tilde{v}(\lambda)$ of the metric $g+\lambda\eta$ of $\mathcal{M}_D$. They satisfy the equations
	\begin{equation}
		\label{extended flat 1}
		\nabla_{g+\lambda\eta}\dif \tilde{v}(\lambda)=0,
	\end{equation}
	and are called periods of the generalized Frobenius manifold \cite{2d-tft,gfm}.
	Denote
	\begin{align}
		\mathcal M_\Lambda=(\mathcal{M}_D \times \mathbb C^\times)\setminus{\Sigma_\Lambda},
	\end{align}
	where \[\Sigma_\Lambda = \{(z,\lambda)\in \mathcal M_D \times \mathbb C^\times\mid\det (g+\lambda\eta)(z) = 0\}.\] 
	Then solutions of the equation \eqref{extended flat 1} live on
	\begin{align*}
		\mathscr Q(\widetilde{\mathcal M}_\Lambda)= \mathscr O(\widetilde{\mathcal M}_\Lambda)/\mathbb C,
	\end{align*}
	where $\widetilde{\mathcal M}_\Lambda$ is the universal covering space of $\mathcal M_\Lambda$. 
	
	In the coordinates $z^1,\dots,z^\ell$, the equation \eqref{extended flat 1}
	reads
	\begin{equation}
		\label{concrete extended 1}
		\pdff{\tilde{v}}{z^i}{z^j} - \Gamma_{g_\lambda, ij}^k \pdf{\tilde{v}}{z^k}= 0,
	\end{equation}
	here $\Gamma_{g_\lambda, ij}^k$ are the Christoffel symbols of the Levi-Civita connection of $g+\lambda\eta$.
	Note that the $\varphi_\lambda$-transformation \eqref{zh-06-01a} transforms the constant metric $a/4\pi^2$ defined in \eqref{a=(.,.)}
	to $g+\lambda\eta$, hence $x^1,\dots,x^\ell$ are flat coordinates of $g+\lambda\eta$ via the $\varphi_\lambda$-transformation. By the definition of proper generators, there exist Laurant polynomials $Z_1,\dots,Z_\ell$ of $\mathrm{e}^{2\pi i(x^1 + \theta_1 c)},\dots,\mathrm{e}^{2\pi i(x^\ell  + \theta_\ell  c)}$ such that
	\begin{align}
		\label{zr = lambda Zr}
		z^r = \mathrm{e}^{-2\pi i\theta_rc}Z_r\left(x^1+\theta_1c,\dots,x^\ell+\theta_\ell c\right).
	\end{align}
	Therefore, the refined flat coordinates
	\begin{align}
		\check{x}^r(z,\lambda) := x^r + \theta_r c = x^r + \frac{\theta_r}{\kappa}\log\lambda,\quad r=1,\dots,\ell
	\end{align}
	depend only on $z^j \mathrm{e}^{2\pi i\theta_jc},\, j=1,\dots,\ell$. As functions of $z^1,\dots,z^\ell $ and $\lambda$, they are quasi-homogeneous of degree zero, i.e., they satisfy the equation
	\begin{equation}\label{extended flat 2}
		\kappa\lambda\pdf{\tilde{v}}{\lambda} + \sum_{j=1}^{\ell}\theta_j z^j\pdf{\tilde{v}}{z^j}=0.
	\end{equation}
	We denote by $\mathcal S_\Lambda$ the space of periods of 
	$\mathcal{M}_D$ that are quasi-homogeneous of degree zero modulo constant functions, then $\mathcal S_\Lambda$ consists of the solutions to equations \eqref{concrete extended 1} and \eqref{extended flat 2} up to  additive constants, hence
	$\dim_{\mathbb C} \mathcal{S}_{\Lambda}=\ell$, and $\check{x}^1(z,\lambda), \dots,\check{x}^\ell (z,\lambda)$ form a basis of $\mathcal S_\Lambda$.
	
	Now we proceed to find another basis of $\mathcal S_\Lambda$ associated with the flat coordinates $t^1(z),\dots,t^\ell(z)$ of the metric $\eta$. We denote $\rho=\frac1{\lambda}$.
	\begin{thm}
		\label{ext coord}
		For each quasi-homogeneous flat coordinate $t^\alpha = t^\alpha(z)$ of the metric $\eta$ with degree $d_\alpha$, $\alpha=1,\dots,\ell$, there exist a unique quasi-homogeneous period $\tilde{v} \in \mathcal S_\Lambda$ such that
		\begin{align}
			\tilde{v} = \tilde{v}^\alpha(z,\rho) = \rho^{d_\alpha/\kappa}h^\alpha(z,\rho),
		\end{align}
		where is holomorphic at $\rho =0$ with $h^\alpha(z,0) = t^\alpha(z)$, or equivalently, $h^{\alpha}(z, \rho)$ has the form
		\begin{align}
			h^\alpha(z,\rho) = t^\alpha(z) + \sum_{j=1}^\infty h^\alpha_{j}(z) \rho^j.
		\end{align}
	\end{thm}
	\begin{proof}
		Let us look for a solution of \eqref{concrete extended 1} and \eqref{extended flat 2} of the form $\tilde{v}(z, \rho) = \rho^{d_\alpha/\kappa}h^\alpha(z,\rho)$ satisfying the initial condition $h^\alpha(z,0)=t^\al(z)$. Denote $\xi_j = \pdf{h}{z^j}$, then 
		these equations take the form
		\begin{align}
			\label{xi 1}
			\pdf{\xi_j}{z^i}- \Gamma_{ij}^k(\rho) \xi_k&= 0,\\
			\label{xi 2}
			-\kappa\rho\pdf{\xi_j}{\rho} + \sum_{r=1}^{\ell}\theta_r z^r\pdf{\xi_j}{z^r}&=(d_\alpha - \theta_j) \xi_j,
		\end{align}
		where $\Gamma_{ij}^k(\rho)$ are the Christoffel symbols of the Levi-Civita connection of $\eta+\rho g$. 
		By using the equations \eqref{xi 1} we can rewrite the equations \eqref{xi 2} as
		\begin{align}
			\label{vec form}
			\kappa\rho\pdf{\vec{\xi}}{\rho} = \left(A(\rho) - d_\alpha\right)\vec{\xi},
		\end{align}
		where
		\begin{align*}
			\vec{\xi}=\vec{\xi}(\rho)=(\xi_1,\cdots,\xi_\ell)^T,\quad
			A_j^k(\rho) = \sum_{r=1}^\ell \theta_r z^r\Gamma_{rj}^k(\rho) + \theta_j\delta_j^k.
		\end{align*}
		Note that $\rho=0$ is a Fuchsian singularity of equation \eqref{vec form}, so we can solve it by expanding $A(\rho)$ and $\vec{\xi}(\rho)$ into power series of $\rho$ as follows:
		\begin{align*}
			A(\rho)=A + \rho A_1 + \rho^2 A_2 + \dots,\quad
			\vec{\xi}(\rho)=\vec{\xi}_{(0)} + \rho \vec{\xi}_{(1)} + \rho^2 \vec{\xi}_{(2)} + \dots,
		\end{align*}
		here $A=A(0)=(A^k_j(0))$ coincides with the matrix $A$ defined in \eqref{A matrix}, and 
		\begin{equation}\label{zh-06-02d}
			\vec{\xi}_{(0)}=\left(\pdf{t^\alpha}{z^1},\dots,\pdf{t^\alpha}{z^\ell}\right)^T.
		\end{equation}
		Note that $\rho = 0$ is a Fuchsian singularity of equation \eqref{vec form}, its solutions of formal power series always admit a positive radius of convergence. As a result, by expanding equation \eqref{vec form} into power series of $\rho$, we only need to solve the linear equations
		\begin{align}
			(A - d_\alpha)\vec{\xi}_{(0)}=&~\mathbf{0},\label{j-11-02}\\
			(A-d_\alpha-j\kappa)\vec{\xi}_{(j)}=&-\sum_{r=1}^{j}A_r\vec{\xi}_{(j-r)},\quad j=1,2,\dots.\label{zh-06-02c}
		\end{align}
		From \eqref{Axidxi} we know that \eqref{j-11-02} automatically holds. Recall from Lemma \ref{zh-06-02b} that for each $\beta=1,\dots,\ell$, $\mathrm{Re}(d_\beta) > 0$, so from \eqref{zh-05-25b} we have
		$0 < \mathrm{Re}(d_\beta) < \kappa$. This indicates that all the eigenvalues of $A$ have real parts between $0$ and $\kappa$, hence we can solve \eqref{zh-06-02c}
		to obtain a unique solution $\vec{\xi}_{(1)}, \vec{\xi}_{(2)},\dots$ recursively. 
		From the fact that $\vec{\xi}_{(0)}$ gives a solution of the equations \eqref{xi 1} and the compatibility of the equations of \eqref{xi 1} and \eqref{xi 2}, we know that $\vec{\xi}_{(1)}, \vec{\xi}_{(2)},\dots$ yield a solution of 
		the equations \eqref{xi 1} and \eqref{xi 2}, thus we proved the proposition.
	\end{proof}
	
	\subsection{The polynomiality property}
	Since periods $v^1(z, \rho),\cdots, v^\ell(z, \rho)$ of $\mathcal{M}_D$ given by Theorem \ref{ext coord} are linearly independent (indeed, they are linearly independent near $\rho=0$), they form another basis of $\mathcal S_\Lambda$. 
	Thus there exist constants $c_\alpha^\beta$ and $s^\beta$ such that
	\begin{align}
		\label{transition}
		\check{x}^\beta\left(z, 1/\rho\right)=c_\alpha^\beta v^\alpha(z,\rho) + s^\beta.
	\end{align}
	Since the functions $Z_r = Z_r(\check{x}^1,\dots,\check{x}^\ell)$ given by \eqref{zr = lambda Zr} are holomorphic functions of $\check{x}^1,\dots,\check{x}^\ell$, they are also holomorphic functions of $v^1(z,\rho),\dots,v^\ell(z,\rho)$. Hence we can expand them to get 
	\begin{align}
		\label{zr representation}
		z^r = \rho^{-\theta_r/\kappa}Z_r=\rho^{-\theta_r/\kappa}\sum_{n_1,\dots,n_\ell \in \mathbb{Z}_{\ge 0}}a_{n_1,\dots,n_\ell}(v^1(z,\rho))^{n_1}\cdots (v^\ell(z,\rho))^{n_\ell},
	\end{align}
	where $a_{n_1,\dots,n_\ell}$ are some constants. 
	
	In order to prove the polynomial dependence of $z^1,\dots, z^\ell$ on $t^1,\dots, t^\ell$, we need the following lemma.
	\begin{lem}
		\label{math analys exercise}
		Suppose $N$ be a positive integer, $b_1,\dots,b_N$ be distinct real numbers, and $p_1,\dots,p_N$ be complex numbers. If
		\begin{align}
			\label{simple fact}
			\lim_{\rho\to 0}\sum_{j=1}^{N} \rho^{\sqrt{-1}b_j}p_j =0,
		\end{align}
		then $p_1=\cdots=p_N = 0$.
		\begin{proof}	
			Let us define the function $h\colon \mathbb{R}\to\mathbb{C}$ as follows:
			\[h(s)=\sum_{j=1}^N p_j e^{-\sqrt{-1}\,b_j s}.\]
			Then from the assumption of the lemma we get
			\[\lim_{s\to+\infty} h(s)=0,\]
			from which it follows that
			\[\lim_{T\to+\infty}\frac1{T}\int_0^T h(s) e^{\sqrt{-1}\, \al s}\nd s =0,\quad \forall \al\in\mathbb{R}.\]
			By taking $\al=b_j$ for $j=1,\dots, N$, we arrive at $p_j=0$. The lemma is proved.
		\end{proof}
	\end{lem}
	
	\begin{thm}
		\label{z is polynomial of t}
		For each $r = 1,\cdots,\ell$, $z^r$ is a polynomial of $t^1,\cdots,t^\ell $.
		\begin{proof}
			It follows from Theorem \ref{ext coord} that
			\[v^\alpha(z,\rho) = \rho^{d_\alpha/\kappa}t^\alpha(z) + o(\rho^{d_\alpha/\kappa}),\quad \textrm{when}\ \rho \to 0.\]
			Here and in what follows, the limit $\rho \to 0$ is taken pointwise for each $z \in \mathcal M_D$. Thus we have
			\begin{align}
				\label{zr expand}
				z^r=\rho^{-\theta_r/\kappa}\sum_{n_1,\dots,n_\ell \in \mathbb{Z}_{\ge 0}}a_{n_1,\dots,n_\ell}\left(\rho^{(n_1d_1 + \dots + n_\ell d_\ell)/\kappa}(t^1)^{n_1}\cdots (t^\ell )^{n_\ell} + o(\rho^{(n_1d_1 + \dots + n_\ell d_\ell)/\kappa})\right).
			\end{align}
			Let us denote
			\begin{align}
				\label{inf}
				\theta=\inf\{\mathrm{Re}(n_1d_1 + \dots + n_\ell d_\ell)~\big|~a_{n_1, \dots,n_\ell}\not= 0\}.
			\end{align}
			We now show that $\theta = \theta_r$. Suppose $\theta >\theta_r$, then it follows from \eqref{zr expand} that
			\begin{align}
				z^r=\rho^{-\theta_r/\kappa}o(\rho^{\theta_r/\kappa})=o(1),\quad \rho \to 0,
			\end{align}
			However, this is not true if $z^r \not= 0$. On the other hand, if $\theta < \theta_r$, then from \eqref{zr expand} we know that
			\begin{align*}
				z^r=~\rho^{(\theta-\theta_r)/\kappa}\sum_{\mathrm{Re}(n_1d_1 + \dots + n_\ell d_\ell) = \theta}\rho^{\sqrt{-1}\,\mathrm{Im}(n_1d_1 + \dots + n_\ell d_\ell)} a_{n_1,\dots,n_\ell}(t^1)^{n_1}\cdots (t^\ell )^{n_\ell} + o(\rho^{(\theta-\theta_r)/\kappa}),
			\end{align*}
			when $\rho \to 0$. Since each $d_\alpha$ admits positive real part, we know that only finitely many vectors $(n_1,\dots,n_\ell)\in \mathbb Z_{\ge 0}^\ell$ satisfy the condition $\mathrm{Re}(n_1d_1 + \dots + n_\ell d_\ell) = \theta$. Suppose $b_1,\dots,b_N$ be all possible distinct values of $\mathrm{Im}(n_1d_1+\cdots+n_\ell d_\ell)$ for those $(n_1,\dots,n_\ell)$, then we have
			\begin{align*}
				z^r=\rho^{(\theta-\theta_r)/\kappa}\sum_{j=1}^N \rho^{\sqrt{-1}b_j}p_j(t^1,\dots,t^\ell) + o(\rho^{(\theta-\theta_r)/\kappa}), \quad\textrm{when}\  \rho \to 0,
			\end{align*}
			where $p_1,\dots,p_N$ are some polynomials of $t^1,\dots,t^\ell$. Since $(\theta-\theta_r)/\kappa < 0$, we must have
			\begin{align}
				\sum_{j=1}^N \rho^{\sqrt{-1}\,b_j}p_j(t^1,\dots,t^\ell) \to 0, \quad\textrm{when}\  \rho \to 0.
			\end{align}
			Thus, by using Lemma \ref{math analys exercise} we arrive at $p_j=0$ for $j=1,\dots,N$. Since $t^1,\dots,t^\ell$ are functionally independent, all the coefficients $a_{n_1,\dots,n_\ell}$ of the polynomials $p_j$ must vanish. This contradicts with the definition \eqref{inf} of $\theta$, so we have $\theta = \theta_r$. Now, we can represent $z^r$ in the form
			\begin{align}
				z^r &=\sum_{\mathrm{Re}(n_1d_1 + \cdots + n_\ell d_\ell) = \theta_r}\rho^{\sqrt{-1}\,\mathrm{Im}(n_1d_1 + \cdots + n_\ell d_\ell)}a_{n_1,\dots,n_\ell}(t^1)^{n_1}\cdots (t^\ell )^{n_\ell} + o(1),\\
				&=p_0(t^1,\dots, t^\ell) + \sum_{j=1}^N \rho^{\sqrt{-1}b_j}p_j(t^1,\dots,t^\ell) + o(1), \quad \textrm{when}\ \rho \to 0.
			\end{align}
			Here $b_1,\dots,b_N$ are all distinct nonzero possible values of $\mathrm{Im}(n_1d_1+\cdots+n_\ell d_\ell)$, and
			$p_0,p_1,\dots,p_N$ are polynomials of $t^1,\dots,t^\ell$. By usingLemma \ref{math analys exercise} again, we get $p_j=0$ for $j=1,\dots,N$ and $z^r = p_0$,
			so $z^r$ is a polynomial of $t^1,\cdots,t^\ell$.
			The theorem is proved.
		\end{proof}
	\end{thm}
	
	\begin{rem}
		The above theorem shows that the pencil generators are polynomials of the flat coordinates of $\eta$. This property is shared by the Frobenius manifold structures defined on the orbit spaces of Coxeter groups \cite{2d-tft} and of the extended affine Weyl groups  \cite{DZ1998, DSZZ}. The key distinction lies in the inverse relationship: while the flat coordinates $t^\alpha$ are also polynomials of the invariant polynomials $z^j$'s for the Frobenius manifold structures constructed in  \cite{2d-tft} and  \cite{DZ1998}, they may involve radical expressions of $z^j$'s
		for the ones constructed in \cite{DSZZ}. For the generalized Frobenius manifold structures obtained by the construction of the present paper, the flat coordinates $t^\alpha$ are usually complicated functions of $z^j$'s, and even not solvable by radicals for some cases, such as $\mathcal M(C_\ell, \omega_\ell,\{y_1,\dots,y_\ell\})$ for $\ell \ge 5$. 
	\end{rem}
	
	\begin{cor}
		\label{deg rational}
		All degrees $d_\alpha$ are positive rational numbers.
		\begin{proof}
			By Lemma \ref{zh-06-02b}, it is sufficient to prove that $d_\alpha \in \mathbb Q$. From Theorem \ref{z is polynomial of t} we know that each $z^r=z^r(t), r=1,\dots\ell$ is a quasi-homogeneous polynomial of $t^1,\dots, t^\ell$ of degree $\theta_r$, so for any of its monomial term of the form $c (t^1)^{n_1}\cdots (t^\ell)^{n_\ell}$ we have
			\[n_1 d_1+\cdots+ n_\ell d_\ell = \theta_r.
			\]
			Since $z^1(t),\dots, z^\ell(t)$ are functionally independent, among their monomial terms we can find $\ell$ terms 
			\[f_k=c_k (t^1)^{n_{k1}}\cdots (t^\ell)^{n_{k\ell}},\quad k=1,\dots,\ell\]
			which are functionally independent, and have degrees $\theta_{m_1},\dots,\theta_{m_\ell}$. Thus we obtain the following system of linear equations for $d_1,\dots,d_\ell$:
			\begin{align}
				\label{solve-degrees}
				n_{k1}d_1+\cdots+n_{k\ell}d_\ell = \theta_{m_k},\quad k= 1,\dots,\ell.
			\end{align}
			Due to the functional independence of $f_1,\dots,f_\ell$, the Jacobian determinant 
			\[
			\det\begin{pmatrix}
				\pdf{f_1}{t^1}&\cdots&\pdf{f_1}{t^\ell}\\
				\vdots & \ddots & \vdots\\
				\pdf{f_\ell}{t^1}&\cdots & \pdf{f_\ell}{t^\ell}
			\end{pmatrix} = \frac{f_1\cdots f_\ell}{t^1\cdots t^\ell}\det\begin{pmatrix}
				n_{11} &\cdots&n_{1\ell}\\
				\vdots & \ddots & \vdots\\
				n_{\ell 1}&\cdots & n_{\ell\ell}
			\end{pmatrix}
			\]
			does not vanish identically. It implies that the system of equations \eqref{solve-degrees} has non-degenerate coefficient matrix, hence it determines $d_1,\dots,d_\ell$ uniquely in $\mathbb Q$. The corollary is proved.
		\end{proof}
	\end{cor}
	
	
	Note that the flat coordinates $t^1,\dots,t^\ell $ as functions of $z^1,\dots,z^\ell$ live on the universal covering $\widetilde{\mathcal M}_D$ of $\mathcal M_D$, hence $\pi_1(\mathcal M _D)$ acts on those functions. Denote the stable subgroup of this action by 
	\begin{align}
		H=\{\gamma \in \pi_1(\mathcal M_D)\mid\gamma^*t^{\alpha}(p)=t^{\alpha}(p),\ \forall p \in\widetilde{\mathcal M}_D,~\alpha=1,\dots, \ell\},
	\end{align}
	then $H$ corresponds to a covering space $\widehat{\mathcal M}_D$ of $\mathcal M_D$. Concretely, we may define an equivalence relation $\sim$ on $\widetilde{\mathcal M}_D$ by
	\begin{align}
		p\sim p' \iff \pi(p) =  \pi(p'),\quad t^\alpha(p) = t^\alpha(p'),\quad \alpha = 1,\dots,\ell.
	\end{align}
	Here $\pi\colon \widetilde{\mathcal M}_D \to \mathcal M _D$ is the canonical projection. Then we have
	\begin{align}
		\label{MH}
		\widehat{\mathcal M}_D =\widetilde{\mathcal M}_D/\sim
	\end{align}
	with covering map
	\[\widehat{\pi}:~\widehat{\mathcal M}_D \to \mathcal M_D.\]
	
	\begin{cor}
		\label{ti are coordinates}
		Define the covering space $\widehat{\mathcal M}_D$ of $\mathcal M_D$ as \eqref{MH}. Then $(t^1,\cdots,t^\ell)$ serves as global coordinates of $\widehat{\mathcal M}_D$.
	\end{cor}
	\begin{proof}
		To prove that $(t^1,\cdots,t^\ell)$ are global coordinates, it suffices to show that
		\begin{align}
			\label{tmp t}
			(t^1,\cdots,t^\ell)\colon \widehat{\mathcal M}_D \to \mathbb {C}^\ell
		\end{align}
		is injective. Suppose $p, p' \in \widehat{\mathcal M}_D$ satisfy $t^\alpha(p) = t^\alpha(p'),~ \alpha = 1,\cdots,\ell$, then by using Theorem \ref{z is polynomial of t} we know that
		\[z^r(t^1(p),\cdots,t^\ell(p)) = z^r(t^1(p'),\cdots,t^\ell(p')).\]
		This implies that $\widehat{\pi}(p) = \widehat{\pi}(p')$, hence $p \sim p'$, i.e., they are identified on $\widehat{\mathcal M}_D$. Therefore, \eqref{tmp t} is injective.
	\end{proof}
	
	\begin{cor}
		Suppose $F$ is the prepotential of $\widehat{\mathcal M}_D$, then all the second derivatives of $F$ are rational functions of $t^1,\cdots,t^\ell$.
	\end{cor}
	
	\begin{proof}
		From Theorem \ref{z is polynomial of t}, one can verify that each $g^{\alpha\beta} = g(\dif t^\alpha, \dif t^\beta)$ is a rational function of $t^1,\cdots,t^\ell$. Now let us look at properties of the potential $F$ of the generalized Frobenius manifold $\widehat{\mathcal{M}}_D$, which is a function of the flat coordinates $t^1,\dots,t^\ell$ and is defined by the relations
		\[
		\eta^{\gamma\nu}\frac{\p^3 F}{\p t^\al\p t^\beta\p t^\nu}=c^\gamma_{\al\beta},\quad
		\mathcal{L}_{\tilde{E}} F=2 F+\textrm{quadratic functions of}\ t^1,\dots, t^\ell,
		\]
		up to the addition of quadratic functions of $t^1,\dots, t^\ell$. Here the structure constants $c^\gamma_{\al\beta}$ of the Frobenius algebra are defined in \eqref{zh-06-02e}. From the relations \eqref{key formula} and \eqref{zh-06-02e}, we can choose the potential $F$ such that
		\begin{equation}
			\eta^{\al\nu}\eta^{\beta\xi}\frac{\p^2 F}{\p t^\nu\p t^\xi}=\frac{\kappa}{d_\al+d_\beta} g^{\al\beta}.
		\end{equation}
		Therefore, all second derivatives of $F$ are rational functions of $t^1,\dots, t^\ell$.
	\end{proof}
	
	\subsection{The Monodromy group of $\mathcal M(R,\omega)$}
	The definition of the monodromy group of a Frobenius manifold was given by Dubrovin in \cite{2d-tft}, and can be extended directly to the case of generalized Frobenius manifolds. Let us first recall this defintion. 
	
	Let $M$ be an $\ell$-dimensional generalized Frobenius manifold with flat metric $\eta$ and intersection form $g$. Consider the Gauss-Manin equation
	\begin{align}
		\label{flat of g}
		\nabla_g\dif v=0.
	\end{align}
	for the flat coordinates of $g$. Denote 
	\[\Sigma = \{z \in  M\mid \det g(z) = 0\},\] 
	then the solutions of \eqref{flat of g} live on the universal covering $\mathcal{V}$ of $ M\setminus\Sigma$. Suppose $v^1,\dots,v^\ell$ be flat coordinates of $g$, then they span, together with constant functions, the solution space of \eqref{flat of g}. 
	For any loop $\gamma \in \pi_1( M\setminus\Sigma)$, $\gamma$ acts on each $v^\alpha$ by analytic continuation. Since this action preserves solutions of \eqref{flat of g}, $\gamma^* v^\alpha$ is still a flat coordinate.
	Therefore, $\gamma$ yields an affine transformation 
	\begin{equation}
		\label{act on S}
		\gamma^*v^\alpha=A_\beta^\alpha(\gamma)v^\beta + B^\alpha(\gamma),\quad \al=1,\dots,\ell
	\end{equation}
	on the solution space of \eqref{flat of g}. Here the linear part $A^\alpha_\beta$ is orthogonal w.r.t. the metric $g$. Let
	\begin{align}
		\mathcal S=\left\{ v\in\mathscr O(\mathcal{V})/\mathbb C \mid v ~\text{solves the equations}~\eqref{flat of g}\right\},
	\end{align}
	and $E^\ell$ be the dual space of $\mathcal S$. Then $v^1,\dots,v^\ell$ span $\mathcal S$, and they also give a  coordinate system on $E^\ell$. 
	The transformation \eqref{act on S} yields an affine transformation on $E^\ell$, which we also denote by $\gamma^*$. This transformation on $E^\ell$ is an isometry w.r.t. the metric $g$, hence we obtain a representation
	\begin{align}
		\label{pi1 represent}
		r\colon\pi_1( M\setminus\Sigma) \to \mathrm{Iso}(E^\ell) \cong O(\ell,\mathbb{C}) \ltimes \mathbb C^\ell,\quad \gamma \mapsto \gamma^*
	\end{align}
	of the fundamental group of $M\setminus\Sigma$.
	\begin{defn}[\cite{2d-tft}, Appendix G.]
		The image of $r$ given in \eqref{pi1 represent} is called the monodromy group of $ M$, which is denoted by
		\begin{align}
			\mathrm{Mono}( M)=r(\pi_1(\mathcal M\setminus\Sigma)) \le O(\ell,\mathbb{C}) \ltimes \mathbb C^\ell.
		\end{align}
	\end{defn}
	
	In our construction, the inner product space $E^\ell$ can be identified with $V\otimes\mathbb C$. Let
	\[
	\Sigma = \left\{z \in \mathcal M_D\mid \det \left(g^{ij}(z)\right) = 0\right\},
	\]
	and $\mathcal U(\mathcal M_D)$ be the universal covering space of $\mathcal M_D \setminus \Sigma$. Then the flat coordinates of $g$ are $x^1\bigr|_{\lambda=0},\cdots,x^\ell \bigr|_{\lambda=0}$, which are functions on $\mathcal U(\mathcal M_D)$. Moreover, $x^1\bigr|_{\lambda=0},\cdots,x^\ell \bigr|_{\lambda=0}$ are also coordinates of $V\otimes \mathbb C$, so the complexified Euclidean space $E^\ell$ is canonically isomorphic to $V\otimes\mathbb C$. Hence we obtain a representation
	\begin{align}
		\label{pi1 representation Weyl case}
		r\colon \pi_1(\mathcal M_D \setminus \Sigma) \to \mathrm{Iso}(V\otimes\mathbb C) \cong O(\ell,\mathbb{C}) \ltimes \mathbb C^\ell,\quad \gamma \mapsto \gamma^*,
	\end{align}
	where
	\begin{align}
		\gamma^*x^i\bigr|_{\lambda=0}=A_j^i(\gamma)x^j\bigr|_{\lambda=0} + B^i(\gamma).
	\end{align}
	Here the image $r(\pi_1(\mathcal M_D \setminus \Sigma))$ is the monodromy group of $\mathcal M_D$. Now we proceed to compute the monodromy group of the generalized Frobenius manifolds $\mathcal M_D$ and $\widehat{\mathcal M}_D$. We will end this subsection by proving that
	\[
	\mathrm{Mono}(\mathcal M_D) \cong \mathrm{Stab}_{W}(\omega)\ltimes \mathbb Z^\ell
	\]
	and $\mathrm{Mono}(\widehat{\mathcal M}_D)$ is a subgroup of $\mathrm{Mono}(\mathcal M_D)$.
	\begin{prop}
		\label{grp embed}
		There exists a natural embedding $\mathrm{Mono}(\widehat{\mathcal M}_D) \hookrightarrow \mathrm{Mono}(\mathcal M_D)$.
	\end{prop}
	
	\begin{proof}
		Recall that the covering map
		\begin{align}
			\widehat{\pi}\colon \widehat{\mathcal M}_D \to \mathcal M _D
		\end{align}
		is nondegerate, so from
		\begin{align}
			\det \left(g^{\alpha\beta}(t)\right) = \det \left(g^{ij}(z)\right)\det\left(\pdf{t^\alpha}{z^j}\right)^2
		\end{align}
		it follows that $\left(g^{\alpha\beta}(t)\right)$ is degenerate if and only if $\left(g^{ij}(z(t))\right)$ is degenerate. In other words, 
		\[
		(\widehat{\pi})^{-1}(\Sigma)=\widehat{\Sigma}:= \left\{t \in \widehat{\mathcal M}_D\mid \det 
		\bigl(g^{\alpha\beta}(t)\bigr) = 0\right\},
		\]
		and the map
		\begin{align}
			\widehat{\pi}: \widehat{\mathcal M}_D \setminus\widehat{\Sigma} \to \mathcal M_D \setminus \Sigma
		\end{align}
		is also a covering map, hence $\pi_1(\widehat{\mathcal M}_D \setminus\widehat{\Sigma})$ is isomorphic to a subgroup of 
		$\pi_1(\mathcal M_D \setminus \Sigma)$.
		Thus, from the above-mentioned definition of the monodromy groups of generalized Frobenius manifolds, it follows the existence of a natural embedding
		\begin{align}
			\label{bigger group}
			\mathrm{Mono}(\widehat{\mathcal M}_D) \hookrightarrow \mathrm{Mono}(\mathcal M_D)
		\end{align}
		as subgroups of $\mathrm{Iso}(V\otimes\mathbb C)$. The proposition is proved.
	\end{proof}
	Now we proceed to show that $\mathrm{Mono}(\mathcal M_D)$ is isomorphic to $\mathrm{Stab}_{W}(\omega) \ltimes\mathbb Z^\ell$.
	%
	Recall that
	\begin{align}
		\varphi_0\colon V\otimes\mathbb C \to \mathcal M
	\end{align}
	is a branched covering, which induces an unbranched covering map
	\begin{align}
		\varphi_0\colon V\otimes\mathbb C\setminus \varphi_0^{-1}(D\cup\Sigma) \to \mathcal M_D \setminus \Sigma.
	\end{align}
	Since $\mathcal U(\mathcal M_D)$ is the universal covering space of $\mathcal M_D \setminus \Sigma$, it is also the universal covering space of $V\otimes\mathbb C\setminus \varphi_0^{-1}(D\cup\Sigma)$. We denote the corresponding covering map by
	\begin{align}
		j_0\colon \mathcal U(\mathcal M_D) \to V\otimes\mathbb C\setminus \varphi_0^{-1}(D\cup\Sigma),\quad s \mapsto (x^1\bigr|_{\lambda=0}(s),\cdots,x^\ell \bigr|_{\lambda=0}(s)).
	\end{align}
	We have the following commutative diagram of covering spaces:
	\begin{center}
		\begin{tikzpicture}[
			>=Stealth,
			scale=0.8,
			every node/.style={align=center},
			node distance=1.5cm
			]
			
			\def\baseLength{5}
			\coordinate (A) at (0, 0);
			\coordinate (B) at (-\baseLength/1.8, -\baseLength/2);
			\coordinate (C) at (0,-\baseLength);
			\coordinate (D) at (\baseLength/1.3, -\baseLength/2);
			
			\tikzset{
				formula/.style={
					rectangle,
					minimum width=3cm,
					inner sep=2pt
				}
			}
			
			\node[formula] (A) at (A) {$\mathcal U(\mathcal M_D)$};
			\node[formula] (B) at (B) {$\widehat{\mathcal M}_D \setminus\widehat{\Sigma}$};
			\node[formula] (C) at (C) {$\mathcal M_D \setminus \Sigma$.};
			\node[formula] (D) at (D) {$V\otimes\mathbb C\setminus\varphi_0^{-1}(D\cup\Sigma)$};
			
			\draw[->] (A) -- node[above,pos=0.5,xshift=-2pt,yshift=2pt] {} (B);
			\draw[->] (B) -- node[left,pos=0.5] {$\widehat{\pi}$} (C);
			\draw[->] (A) -- node[right,pos=0.5,xshift=-1pt] {} (C);
			\draw[->] (A) -- node[right,pos=0.4,xshift=5pt,yshift=5pt] {$j_0$} (D);
			\draw[->] (D) -- node[right,pos=0.5,xshift=1pt,yshift=-2pt] {$\varphi_0$} (C);
		\end{tikzpicture}
	\end{center}
	All the maps in the diagram are covering maps. 
	
	To determine the image of $\pi_1(\mathcal M_D\setminus\Sigma)$ under the map $r$ that is introduced in \eqref{pi1 representation Weyl case}, we are to use the isomorphism
	\begin{align}
		\label{pi1=deck}
		\pi_1(\mathcal M_D \setminus \Sigma)\cong\mathrm{Deck}\left(\frac{\mathcal U(\mathcal M_D)}{\mathcal M_D\setminus \Sigma}\right),
	\end{align}
	here $\mathrm{Deck}(X/Y)$ is the deck transformation group on the covering space $X$ over $Y$. We will also denote by $\gamma$ the deck transformation on $\mathcal U(\mathcal M_D)$ induced by $\gamma \in \pi_1(\mathcal M_D \setminus \Sigma)$.
	\begin{lem}
		\label{induced deck transformation}
		Let $\gamma \in \pi_1(\mathcal M_D \setminus \Sigma)$, then $p=r(\gamma)$ is the unique deck transformation on $V\otimes\mathbb C\setminus \varphi_0^{-1}(D\cup\Sigma)$ over $\mathcal M_D \setminus \Sigma$ such that
		\begin{align}
			\label{com}
			j_0\circ \gamma = p\circ j_0.
		\end{align}
		In other words, we have the following commutative diagram:
		\begin{center}
			\begin{tikzpicture}[
				>=Stealth,
				scale=0.8,
				every node/.style={align=center},
				node distance=1.5cm
				]
				
				\def\baseLength{5}
				\coordinate (A) at (-\baseLength/1.5, 0);
				\coordinate (B) at (\baseLength/1.5, 0);
				\coordinate (C) at (0, -\baseLength/2.5);
				
				\coordinate (D) at (-\baseLength/3, \baseLength/2.5);
				\coordinate (E) at (\baseLength/3, \baseLength/2.5);
				
				\tikzset{
					formula/.style={
						rectangle,
						minimum width=3cm,
						inner sep=2pt
					}
				}
				
				\node[formula] (A) at (A) {$V\otimes\mathbb C\setminus \varphi_0^{-1}(D\cup\Sigma)$};
				\node[formula] (B) at (B) {$V\otimes\mathbb C\setminus \varphi_0^{-1}(D\cup\Sigma)$};
				\node[formula] (C) at (C) {$\mathcal M_D \setminus \Sigma$.};
				\node[formula] (D) at (D) {$\mathcal U(\mathcal M_D)$};
				\node[formula] (E) at (E) {$\mathcal U(\mathcal M_D)$};
				
				\draw[->,dashed] (A) -- node[above,pos=0.5] {$\exists !~ p=r(\gamma)$} (B);
				\draw[->] (B) -- node[right,pos=0.5,xshift=2pt,yshift=-2pt] {$\varphi_0$} (C);
				\draw[->] (A) -- node[left,pos=0.5,xshift=-1pt,yshift=-2pt] {$\varphi_0$} (C);
				
				\draw[->] (-\baseLength/7, \baseLength/2.5) -- node[above,pos=0.5] {$\gamma$} (\baseLength/7, \baseLength/2.5); 
				\draw[->] (D) -- node[left,pos=0.5,xshift=-5pt,yshift=5pt] {$j_0$} (A);
				\draw[->] (E) -- node[right,pos=0.5,yshift=5pt] {$j_0$} (B);
				
			\end{tikzpicture}.
		\end{center}
		\begin{proof}
			The uniqueness follows from the fact that the covering map $j_0$ is surjective. Now we show that $p=r(\gamma)$ satisfies the commutative diagram. Since $r(\gamma) = \gamma^*$ is defined as analytic continuation on $\mathcal U(\mathcal M_D)$, $p=r(\gamma)$ satisfies \eqref{com}. We then only need to verify that $r(\gamma)$ is a deck transformation on $V\otimes\mathbb C\setminus \varphi_0^{-1}(D\cup\Sigma)$ over $\mathcal M_D \setminus \Sigma$, i.e., $\varphi_0\circ r(\gamma) = \varphi_0$. Since $\gamma$ is a deck transformation over $\mathcal M_D \setminus \Sigma$, we have
			\begin{align}
				\varphi_0\circ j_0 =\varphi_0\circ j_0\circ \gamma=\varphi_0\circ r(\gamma)\circ j_0.
			\end{align}
			Since $j_0$ is surjective, it follows that
			\begin{align*}
				\varphi_0 = \varphi_0\circ r(\gamma).
			\end{align*}
			The lemma is proved.
		\end{proof}
	\end{lem}
	
	\begin{prop}
		\label{describe}
		The monodromy group $r(\pi_1(\mathcal M_D \setminus \Sigma))$ can be represented as
		\begin{align}
			r(\pi_1(\mathcal M_D \setminus \Sigma))= \mathrm{Deck}\bigg(\frac{V\otimes\mathbb C\setminus \varphi_0^{-1}(D\cup\Sigma)}{\mathcal M_D \setminus \Sigma}\bigg).
		\end{align}
		In particular, every deck transformation on $V\otimes\mathbb C\setminus \varphi_0^{-1}(D\cup\Sigma)$ over $\mathcal M_D \setminus \Sigma$ can be extended to an isometry on $V\otimes\mathbb C$.
	\end{prop}
	\begin{proof}
		For any $\gamma \in \pi_1(\mathcal M_D \setminus \Sigma)$, we know from Lemma \ref{induced deck transformation} that $r(\gamma)$ is a deck transformation on $V\otimes\mathbb C\setminus \varphi_0^{-1}(D\cup\Sigma)$ over $\mathcal M_D \setminus \Sigma$.
		
		On the other hand, suppose $p$ is a deck transformation on $V\otimes\mathbb C\setminus \varphi_0^{-1}(D\cup\Sigma)$ over $\mathcal M_D \setminus \Sigma$. Since $\mathcal U(\mathcal M_D)$ is their universal covering space, we know that the deck transformation $p$ can be lifted to a deck transformation $\sigma: \mathcal U(\mathcal M_D) \to \mathcal U(\mathcal M_D)$.
		Therefore, we have the following commutative diagram:
		\begin{center}
			\begin{tikzpicture}[
				>=Stealth,
				scale=0.8,
				every node/.style={align=center},
				node distance=1.5cm
				]
				
				\def\baseLength{5}
				\coordinate (A) at (-\baseLength/1.5, 0);
				\coordinate (B) at (\baseLength/1.5, 0);
				\coordinate (C) at (0, -\baseLength/2.5);
				
				\coordinate (D) at (-\baseLength/3, \baseLength/2.5);
				\coordinate (E) at (\baseLength/3, \baseLength/2.5);
				
				\tikzset{
					formula/.style={
						rectangle,
						minimum width=3cm,
						inner sep=2pt
					}
				}
				
				\node[formula] (A) at (A) {$V\otimes\mathbb C\setminus \varphi_0^{-1}(D\cup\Sigma)$};
				\node[formula] (B) at (B) {$V\otimes\mathbb C\setminus \varphi_0^{-1}(D\cup\Sigma)$};
				\node[formula] (C) at (C) {$\mathcal M_D \setminus \Sigma$.};
				\node[formula] (D) at (D) {$\mathcal U(\mathcal M_D)$};
				\node[formula] (E) at (E) {$\mathcal U(\mathcal M_D)$};
				
				\draw[->] (A) -- node[above,pos=0.5] {$p$} (B);
				\draw[->] (B) -- node[right,pos=0.5,xshift=2pt,yshift=-2pt] {$\varphi_0$} (C);
				\draw[->] (A) -- node[left,pos=0.5,xshift=-1pt,yshift=-2pt] {$\varphi_0$} (C);
				
				\draw[->] (-\baseLength/7, \baseLength/2.5) -- node[above,pos=0.5] {$\sigma$} (\baseLength/7, \baseLength/2.5); 
				\draw[->] (D) -- node[left,pos=0.5,xshift=-5pt,yshift=5pt] {$j_0$} (A);
				\draw[->] (E) -- node[right,pos=0.5,yshift=5pt] {$j_0$} (B);
				
			\end{tikzpicture}
		\end{center}
		Thus by using Lemma \ref{induced deck transformation}, we have $p = r(\sigma)$. The proposition is proved.
	\end{proof}
	Now we are ready to compute the monodromy group $\mathrm{Mono}(\mathcal M_D)$.
	\begin{thm}
		\label{compute mono}
		For the generalized Frobenius manifold $\mathcal M_D = \mathcal M(R,\omega)$, its monodromy group $\mathrm{Mono}(\mathcal M_D)$ is isomorphic to a subgroup of $W_a(R)\cong W(R) \ltimes \mathbb Z^\ell $. More precisely,
		\begin{align}
			\mathrm{Mono}(\mathcal M_D)\cong\mathrm{Stab}_{W}(\omega) \ltimes \mathbb Z^\ell .
		\end{align}
		Here $\mathrm{Stab}_{W}(\omega)$ stands for the stabilizer of $\omega$ in $W(R)$. 
	\end{thm}
	We first prove three lemmas.
	\begin{lem}
		\label{abstract alg fact}
		Let $\mathbb F \hookrightarrow \mathbb E$ be a field extension, $x_1,\dots,x_m \in \mathbb E$ be some nonzero algebraic elements over $\mathbb F$. Suppose $a_1,\dots,a_m \in \mathbb E$ such that $a_1x_1^n+\cdots + a_mx_m^n \in \mathbb F$ holds for any $n \in \mathbb Z_{>0}$, then it also holds for $n=0$, i.e., we have $a_1+\cdots + a_m \in \mathbb F$.
	\end{lem}
	\begin{proof}
		Since $x_i \not= 0$, the minimal polynomial $p_i$ of $x_i$ over $\mathbb F$ has a nonzero constant term (even if $x_i \in \mathbb F$). Let $p = p_1\cdots p_m$, then $a_1p(x_1) + \cdots + a_m p(x_m) = 0$. Since $p$ has a nonzero constant term, we have $a_1+\cdots + a_m \in \mathbb F$.
	\end{proof}
	
	\begin{lem}
		\label{key lem}
		For any $\rho \in \bigoplus_{j=1}^\ell\mathbb Z\omega_j$, we have
		\begin{align}
			\label{target in key lem}
			Q_\rho(\hat{\bfx}) = \sum_{\sigma \in \mathrm{Stab}_W(\omega)} \mathrm{e}^{2\pi i(\rho, \sigma(\hat{\bfx}))} \in \mathbb C\left(z^1\big|_{\lambda=0},\dots,z^\ell\big|_{\lambda=0}\right).
		\end{align}
	\end{lem}
	\begin{proof}
		For any $\rho \in \bigoplus_{j=1}^\ell\mathbb Z\omega_j$, we  denote
		\begin{align}
			\rho =\rho_1\omega_1+\cdots + \rho_\ell\omega_\ell,\quad \rho_j \in \mathbb Z.
		\end{align}
		We first consider the case when each $\rho_j >0$. In what follows, we briefly call this case by $\rho > 0$. Note that
		\begin{equation}
			\label{zh-01-06-1}
			\sum_{\sigma \in W(R)} \mathrm{e}^{2\pi i(\rho, \sigma(\bfx) - c\omega)}= \sum_{\sigma \in W(R)} \mathrm{e}^{2\pi i(\rho, \sigma(\hat{\bfx}) + c\sigma(\omega) - c\omega)} \in \mathscr A^W[\lambda^{\varepsilon/\kappa}],
		\end{equation}
		where $\varepsilon = \mathrm{gcd}\{(\omega_r,\alpha_r)\mid r=1,\dots,\ell\}$. Indeed, \eqref{zh-01-06-1} is $W_a(R)$-invariant, and equation \eqref{sigma(weight)} implies that
		\[\sigma(\omega) - \omega= -\sum_{r=1}^{\ell} n_r\alpha_r\] for some $n_r \in \mathbb Z_{\ge 0}$, which indicates that $(\rho, \sigma(\omega) - \omega) \in \varepsilon\mathbb Z_{\le 0}$. Since $\mathscr A^W = \mathbb C[z^1,\dots,z^\ell;\lambda]$, by putting $\lambda = 0$ in \eqref{zh-01-06-1}, we obtain
		\[
		\sum_{\substack{\sigma \in W(R)\\(\rho, \sigma(\omega) - \omega) = 0}} \mathrm{e}^{2\pi i(\rho, \sigma(\hat\bfx))} \in \mathbb C\left[z^1\big|_{\lambda=0},\dots,z^\ell\big|_{\lambda=0}\right].
		\]
		Therefore, we only need to show that for any $\sigma \in W(R)$, $(\rho, \sigma(\omega) - \omega) = 0$ implies $\sigma(\omega) = \omega$. In fact, if $(\rho, \sigma(\omega) - \omega) = 0$, we have
		\[
		(\rho, \sigma(\omega) - \omega) = (\rho, -\sum_{r=1}^{\ell}n_r\alpha_r) = \sum_{r=1}^{\ell} \rho_r n_r(\omega_r,\alpha_r) = 0.
		\]
		Since $\rho_r > 0$, $n_r \ge 0$ and $(\omega_r,\alpha_r) > 0$, $r=1,\dots,\ell$, we have $n_r = 0$, $r=1,\dots,\ell$, hence $\sigma(\omega) = \omega$. Therefore, $Q_\rho \in \mathbb C(z^1\big|_{\lambda=0},\dots,z^\ell\big|_{\lambda=0})$ for any $\rho > 0$.
		
		In particular, we know that for any $\rho > 0$ and $\sigma \in \mathrm{Stab}_W(\omega)$, $\mathrm{e}^{2\pi i(\rho, \sigma(\hat{\bfx}))}$ is algebraic over $\mathbb C(z^1\big|_{\lambda=0},\dots,z^\ell\big|_{\lambda=0})$. In fact, since for any $n \in\mathbb Z_{>0}$, we have $n\rho > 0$, hence
		\[
		Q_{n\rho}(\hat{\bfx}) = \sum_{\sigma \in \mathrm{Stab}_W(\omega)}\mathrm{e}^{2\pi i(n\rho, \sigma(\hat{\bfx}))} \in \mathbb C\left(z^1\big|_{\lambda=0},\dots,z^\ell\big|_{\lambda=0}\right).
		\]
		In other words, all the Newton sums of $\{\mathrm{e}^{2\pi i (\rho, \sigma(\hat{\bfx}))}\mid \sigma \in \mathrm{Stab}_W(\omega)\}$  belong to the field $\mathbb C\left(z^1\big|_{\lambda=0},\dots,z^\ell\big|_{\lambda=0}\right)$, hence so are their symmetric polynomials
		\[
		r_n = \sum_{\substack{I\subseteq \mathrm{Stab}_W(\omega)\\ \# I = n}}~\prod_{\sigma \in I}\mathrm{e}^{2\pi i (\rho, \sigma(\hat{\bfx}))} \in \mathbb C\left(z^1\big|_{\lambda=0},\dots,z^\ell\big|_{\lambda=0}\right) .
		\]
		This implies that they are algebraic over $\mathbb C\left(z^1\big|_{\lambda=0},\dots,z^\ell\big|_{\lambda=0}\right)$.
		
		Now let us consider the general case. For any $\rho \in \bigoplus_{j=1}^\ell\mathbb Z\omega_j$, we take $\xi \in \bigoplus_{j=1}^\ell\mathbb Z\omega_j$ and $\xi>0$ such that $\rho + \xi > 0$. Then for any $n \in \mathbb Z_{>0}$, we have $\rho + n\xi > 0$, hence
		\[
		Q_{\rho + n\xi}(\hat{\bfx}) = \sum_{\sigma \in \mathrm{Stab}_W(\omega)} \mathrm{e}^{2\pi i(\rho + n\xi, \sigma(\hat{\bfx}))} \in \mathbb C\left(z^1\big|_{\lambda=0},\dots,z^\ell\big|_{\lambda=0}\right).
		\]
		Since $\mathrm{e}^{2\pi i(\xi, \sigma(\hat{\bfx}))}$ is algebraic over $\mathbb C\left(z^1\big|_{\lambda=0},\dots,z^\ell\big|_{\lambda=0}\right)$, it follows from Lemma \ref{abstract alg fact} that
		\[
		Q_{\rho}(\hat{\bfx}) = \sum_{\sigma \in \mathrm{Stab}_W(\omega)} \mathrm{e}^{2\pi i(\rho, \sigma(\hat{\bfx}))} \in \mathbb C\left(z^1\big|_{\lambda=0},\dots,z^\ell\big|_{\lambda=0}\right).
		\]
		Therefore, we have proved the lemma.
	\end{proof}
	\begin{lem}
		\label{Galois ext}
		Let $R$ be a root system, $\omega$ be a weight and $\mathscr A$ be the corresponding $\lambda$-Fourier polynomial ring. Denote by $\mathscr K = \mathrm{Frac}(\mathscr A)$ the fraction field of $\mathscr A$, then $W(R)$ acts on $\mathscr K$, and
		\[
		\mathrm{Inv}_{\mathscr K}\left(\mathrm{Stab}_W(\omega)\right) =\mathbb C\left(z^1\big|_{\lambda=0},\dots,z^\ell\big|_{\lambda=0};\lambda\right).
		\]
		Here $\mathrm{Inv}_{\mathscr K}(\mathrm{Stab}_W(\omega))$ refers to the fixed subfield of $\mathscr K$ with respect to the action of $\mathrm{Stab}_W(\omega)$. In particular, the field extension $\mathbb C(z^1\big|_{\lambda=0},\dots,z^\ell\big|_{\lambda=0};\lambda) \hookrightarrow \mathscr K$ is Galois.
	\end{lem}
	\begin{proof}
		From \eqref{phi 0} and \eqref{proper gen defn} we know that
		\begin{align}
			z^j\bigr|_{\lambda=0}= \frac 1{N_j}\sum_{\sigma \in H_j} \mathrm{e}^{2\pi i(\sigma^{-1}(\omega_j), \hat{\mathbf{x}})},
		\end{align}
		where 
		\begin{align}
			\label{define Hj}
			H_j =\{\sigma \in W(R)\mid (\omega_j, \omega - \sigma(\omega)) = 0\}.
		\end{align}
		Since for any $\phi \in \mathrm{Stab}_W(\omega)$, $\sigma \mapsto \sigma\phi$ induces a  permutation on $H_j$, we have 
		\[z^j\big|_{\lambda=0} \in \mathrm{Inv}_{\mathscr K}(\mathrm{Stab}_W(\omega)).\]
		Thus we only need to prove that 
		\[\mathrm{Inv}_{\mathscr K}(\mathrm{Stab}_W(\omega))\subseteq\mathbb C\left(z^1\big|_{\lambda=0},\dots,z^\ell\big|_{\lambda=0};\lambda\right).\]

		We first show that 
		\begin{align}
			\label{zh-01-06-2}
			\mathbb C[\mathrm{e}^{\pm2\pi ix^j}\mid j=1,\dots,\ell]^{\mathrm{Stab}_W(\omega)}\subseteq\mathbb C\left(z^1\big|_{\lambda=0},\dots,z^\ell\big|_{\lambda=0}\right).
		\end{align}
		In fact, for any $q \in\mathbb C[\mathrm{e}^{\pm2\pi ix^j}\mid j=1,\dots,\ell]^{\mathrm{Stab}_W(\omega)}$, we denote \[q= \sum_{\rho\in\bigoplus_{j=1}^\ell\mathbb Z\omega_j} A_\rho \mathrm{e}^{2\pi i(\rho, \hat{\bfx})},\] where only finitely many $A_\rho$ is not zero. Since $q$ is $\mathrm{Stab}_W(\omega)$-invariant, we have
		\[
		q =  \frac1{\#\mathrm{Stab}_W(\omega)}\sum_{\rho\in\bigoplus_{j=1}^\ell\mathbb Z\omega_j}A_\rho Q_\rho(\hat{\bfx}).
		\]
		By using Lemma \ref{key lem}, we arrive at \eqref{zh-01-06-2}.
		
		We next show that 
		\begin{align}
			\label{zh-01-06-3}
			\mathscr A^{\mathrm{Stab}_W(\omega)} \subseteq \mathbb C(z^1\big|_{\lambda=0},\dots,z^\ell\big|_{\lambda=0};\lambda).
		\end{align}
		Recall that $W(R)$ does not act on $\mathscr A$, since its action may introduce terms with negative power of $\lambda$. However, its subgroup $\mathrm{Stab}_W(\omega)$ acts on $\mathscr A$. In fact, for each $q \in \mathscr A$, we can write it as
		\[
		q(\hat{\bfx};\lambda) = \sum_r \lambda^r q_r(\hat{\bfx}),\quad q_r \in \mathbb C[\mathrm{e}^{\pm2\pi ix^j}\mid j=1,\dots,\ell].
		\]
		Then an element $\phi \in \mathrm{Stab}_W(\omega)$ acts on $q$ by
		\begin{align}
			\label{stab acts on A}
			(\phi^* q)(\hat{\bfx};\lambda)= \sum_r \lambda^r q_r(\phi(\bfx) - c\omega) = \sum_r \lambda^r q_r(\phi(\hat{\bfx})),
		\end{align}
		where we used the fact that $\phi(\omega) = \omega$. Therefore, $\mathrm{Stab}_W(\omega)$ indeed acts on $\mathscr A$. Note that \eqref{stab acts on A} also implies that if $q \in \mathscr A^{\mathrm{Stab}_W(\omega)}$, then each $q_r$ is also $\mathrm{Stab}_W(\omega)$-invariant, hence it follows from \eqref{zh-01-06-2} that \[q_r \in \mathbb C(z^1\big|_{\lambda=0},\dots,z^\ell\big|_{\lambda=0}),\] 
		and $q \in\mathbb C(z^1\big|_{\lambda=0},\dots,z^\ell\big|_{\lambda=0}; \lambda)$.
		
		Finally we show that for any $p/q \in \mathrm{Inv}_{\mathscr K}(\mathrm{Stab}_W(\omega))$, where $p,q \in \mathscr A$, we have \[\frac pq \in\mathbb C(z^1\big|_{\lambda=0},\dots,z^\ell\big|_{\lambda=0}; \lambda).\] In fact, we have
		\[
		\frac pq = \frac{p\cdot\prod_{\phi \in \mathrm{Stab}_W(\omega) \setminus\{\mathrm{id}\}}\phi^* q}{\prod_{\phi \in \mathrm{Stab}_W(\omega) }\phi^* q} =: \frac{\tilde p}{\tilde q}.
		\]
		Here $\tilde q \in \mathscr A^{\mathrm{Stab}_{W}(\omega)}$, hence $\tilde p\in \mathscr A^{\mathrm{Stab}_{W}(\omega)}$. Therefore, by using \eqref{zh-01-06-3} we have \[\frac pq \in\mathbb C(z^1\big|_{\lambda=0},\dots,z^\ell\big|_{\lambda=0}; \lambda).\]
		The lemma is proved.
	\end{proof}
	\begin{proof}[Proof of Theorem \ref{compute mono}]
		Due to Proposition \ref{describe}, we only need to determine all the isometric actions on $V\otimes\mathbb C\setminus \varphi_0^{-1}(D\cup\Sigma)$ that preserve $\mathcal M_D \setminus \Sigma$, i.e. actions on coordinates $x^i$ that preserves $z^j\bigr|_{\lambda=0}$. Recall that
		\begin{align}
			z^j\bigr|_{\lambda=0}=\frac 1{N_j}\sum_{\sigma \in H_j} \mathrm{e}^{2\pi i(\sigma^{-1}(\omega_j), \hat{\mathbf{x}})}.
		\end{align}
		For any given element $\tilde\phi \in \mathrm{Iso}(V\otimes\mathbb C )$, we decompose it into linear and the affine parts
		as follows:
		\begin{align}
			\tilde\phi(\hat{\mathbf{x}})= \phi(\hat{\mathbf{x}}) + v^1\alpha_1^\vee + \dots + v^\ell \alpha_\ell ^\vee.
		\end{align}
		Then $\tilde{\phi}$ acts on the the Fourier polynomials $z^j$ by
		\begin{align}
			(\tilde\phi^* z^j)\big|_{\lambda=0}&=\frac 1{N_j}\sum_{\sigma \in H_j} \mathrm{e}^{2\pi i(\sigma^{-1}(\omega_j), \tilde\phi(\hat{\mathbf{x}}))}\\
			&=\frac 1{N_j}\sum_{\sigma \in H_j} \mathrm{e}^{2\pi i(\sigma^{-1}(\omega_j), \phi(\hat{\mathbf{x}}) + v^1\alpha_1^\vee + \dots + v^\ell \alpha_\ell ^\vee)},
		\end{align}
		where $H_j$ is defined in \eqref{define Hj}. Therefore, the action preserves all $z^j\bigr|_{\lambda=0}$, $j = 1,\dots,\ell$ if and only if both of the following conditions hold:
		\begin{enumerate}
			\item\label{item1} For each $j=1,\dots,\ell$, $\phi$ induces a permutation on $\{\sigma^{-1}(\omega_j)\mid \sigma\in H_j\}$ by right action $\sigma \mapsto \sigma\phi$.
			\item\label{item2} for each $j=1,\dots,\ell$, $\sigma \in H_j$,
			\[(\sigma^{-1}(\omega_j), v^1\alpha_1^\vee + \cdots + v^\ell \alpha_\ell ^\vee)\]
			is an integer.
		\end{enumerate}
		We now proceed to respectively show that condition \eqref{item1} is equivalent to $\phi \in \mathrm{Stab}_W(\omega)$, and condition \eqref{item2} is equivalent to $v^1,\dots,v^\ell  \in \mathbb Z$.
		
		Suppose $\phi \in \mathrm{Stab}_W(\omega)$, then $\sigma \mapsto \sigma\phi$ induces a permutation on $H_j$, hence on $\{\sigma^{-1}(\omega_j)\mid \sigma\in H_j\}$. On the other hand, if condition \eqref{item1} holds, we are to show that $\phi \in \mathrm{Stab}_W(\omega)$. We consider the canonical action of $\phi$ on $\mathscr K$ given by $\phi^* q(\bfx, \lambda) = q(\phi(\bfx), \lambda)$. According to Lemma \ref{Galois ext}, we know that
		\[
		\mathrm{Gal}\left(\mathscr K/\mathbb C\left(z^1|_{\lambda=0},\dots,z^\ell|_{\lambda=0};\lambda\right)\right) = \mathrm{Stab}_W(\omega).
		\]
		Condition \eqref{item1} implies that $\phi^*$ fixes $\mathbb C\left(z^1|_{\lambda=0},\dots,z^\ell|_{\lambda=0};\lambda\right)$, hence we only need to show that $\phi^*(\mathscr K) \subseteq \mathscr K$. Note that $\mathscr K$ is generated by $\mathrm{e}^{2\pi i x_1},\dots,\mathrm{e}^{2\pi i x_\ell}$ and $\lambda$ as $\mathbb C$-algebra, and we have $\phi^*\lambda = \lambda$, it suffices to show that $\phi^*\mathrm{e}^{2\pi i x_1} \in \mathscr K$. In fact, by considering $\mathrm{id} \in H_j$, $j=1,\dots,\ell$, we know from condition \eqref{item1} that 
		\[
		\phi^*(\omega_j) = \sigma_j(\omega_j),\quad \exists~\sigma_j \in H_j.
		\]
		Then in particular, for each $j=1,\dots,\ell$, we have
		\[
		\phi^*\mathrm{e}^{2\pi ix^j} =\phi^*\mathrm{e}^{2\pi i(\omega_j, \bfx - c\omega)} =  \mathrm e^{2\pi i(\omega_j,\phi(\bfx) - c\omega)} = \mathrm e^{2\pi i(\sigma_j^{-1}(\omega_j), \hat{\bfx})}.
		\]
		Here we used $(\sigma_j^{-1}(\omega_j) - \omega_j, \omega) = 0$. From \eqref{too long to find a name} we know that for each $r=1,\dots,\ell$, $(\sigma_j^{-1}(\omega_j),\alpha_r^\vee)$ is an integer, so
		\[
		\phi^*\mathrm{e}^{2\pi ix^j} = \mathrm e^{2\pi i\sum_r x^r(\sigma_j^{-1}(\omega_j), \alpha_r^\vee)} \in \mathscr A,\quad j=1,\dots,\ell.
		\]
		It follows immediately that $\phi^*(\mathscr K) \subseteq \mathscr K$. Hence $\phi \in \mathrm{Stab}_W(\omega)$.
		
		Suppose $v^1,\dots,v^\ell \in \mathbb Z$, then for any $\sigma \in H_j$, we know from \eqref{too long to find a name} that for each $i=1,\dots,\ell$ $(\sigma^{-1}(\omega_j),\alpha_i^\vee)$ is an integer, so is their $\mathbb Z$-combination $(\sigma^{-1}(\omega_j), v^1\alpha_1^\vee + \cdots + v^\ell \alpha_\ell ^\vee)$. On the other hand, if condition \eqref{item2} holds, then for any $j=1,\dots,\ell$, by considering $\mathrm{id} \in H_j$, we obtain that $v^j \in \mathbb Z$.
		
		Therefore, we have $\mathrm{Mono}(\mathcal M_D) = \mathrm{Stab}_W(\omega)\ltimes \mathbb Z^\ell$.
	\end{proof}
	By using Proposition \ref{grp embed}, we have the following corollary.
	\begin{cor}
		\label{mono le stab}
		The monodromy group of $\widehat{\mathcal{M}}_D$ is isomorphic to a subgroup of $\mathrm{Stab}_{W}(\omega)\ltimes\mathbb Z^\ell$.
	\end{cor}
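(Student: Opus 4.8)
The plan is to read this off from the theorem just established, using the containment \eqref{bigger group}. First I would recall that, by definition, $\mathrm{Mono}(\overline{\mathcal M}) = r(\pi_1(\overline{\mathcal M}\setminus\overline{\Sigma}))$, where $r$ is the isometric monodromy representation \eqref{pi1 representation Weyl case} obtained by analytic continuation of the flat coordinates $x^1\big|_{\lambda=0},\dots,x^\ell\big|_{\lambda=0}$ of the intersection form $g$ around loops in $\overline{\mathcal M}\setminus\overline{\Sigma}$. Next I would use that $\overline{\mathcal M}\setminus\overline{\Sigma}$ is a covering space of $\mathcal M\setminus(D\cup\Sigma')$ --- which was already checked, giving \eqref{tmp containing} --- so that $\pi_1(\overline{\mathcal M}\setminus\overline{\Sigma})$ embeds as a subgroup of $\pi_1(\mathcal M\setminus(D\cup\Sigma'))$. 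Since both spaces have the same universal cover $\mathcal U(\mathcal M)$, and since $g^{ij}(z)$ are polynomials in $z^1,\dots,z^\ell$ and hence single-valued on $\mathcal M\setminus(D\cup\Sigma')$, the representation $r$ on the larger fundamental group restricts on this subgroup to precisely the monodromy representation of $\overline{\mathcal M}$; this is the inclusion \eqref{bigger group}, regarded inside $\mathrm{Iso}(V\otimes\mathbb C)$.

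Then I would simply invoke the preceding theorem, which identifies $r(\pi_1(\mathcal M\setminus(D\cup\Sigma')))\cong\mathrm{Stab}(\omega)\rtimes\mathbb Z^\ell$ concretely inside $\mathrm{Iso}(V\otimes\mathbb C)$ via Proposition \ref{describe}. Chaining the inclusion of the first step with this isomorphism shows that $\mathrm{Mono}(\overline{\mathcal M})$ is isomorphic to a subgroup of $\mathrm{Stab}(\omega)\rtimes\mathbb Z^\ell$, which is the assertion.

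I do not anticipate any real obstacle: the corollary is a formal consequence of the theorem. The only point that requires a bit of care is confirming that the two maps denoted $r$ --- the one defining $\mathrm{Mono}(\overline{\mathcal M})$ on $\pi_1(\overline{\mathcal M}\setminus\overline{\Sigma})$ and the one on $\pi_1(\mathcal M\setminus(D\cup\Sigma'))$ appearing in the theorem --- are genuinely a restriction and an extension of a single representation on $\mathcal U(\mathcal M)$, rather than two independent constructions. This is exactly what was arranged when the representation was extended just before \eqref{pi1 representation 2}, and it is also the content of Remark \ref{intuitive}. Granting that identification, the combination of \eqref{bigger group} with the theorem finishes the proof at once.
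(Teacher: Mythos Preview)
Your proposal is correct and follows exactly the paper's approach: the corollary is obtained by combining the inclusion \eqref{bigger group} with the preceding theorem identifying $r(\pi_1(\mathcal M\setminus(D\cup\Sigma')))\cong\mathrm{Stab}(\omega)\rtimes\mathbb Z^\ell$. The paper's proof is just the sentence ``By using \eqref{bigger group}, we have the following corollary,'' and your write-up merely unpacks the same reasoning in more detail.
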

	
	In general, the monodromy group of $\widehat{\mathcal{M}}_D$ is not isomorphic to that of $\mathcal M_D$, and we will give a counterexample in Sect.\,\ref{examples}. However, in some cases we have $\widehat{\mathcal{M}}_D=\mathcal M_D$, so they have isomorphic monodromy groups. We will also see such examples in Sect.\,\ref{examples}.
	\begin{rem}
		The group $\mathrm{Stab}_{W}(\omega)$ is a parabolic subgroup of $W(R)$ \cite{bourbaki}. Explicitly, we have
		\begin{align}
			\mathrm{Stab}_{W}(\omega) = W_{S^c} = \langle\sigma_i\mid i \in S^c\rangle,
		\end{align}
		where $S$ is given by \eqref{zzh-4} and $S^c$ is its complement in $\{1,\cdots,\ell\}$.
	\end{rem}
	
	\section{Examples}
	\label{examples}
	In this section, we give some typical examples for our construction. We assume that $e_1,\dots, e_n$ give an orthonormal basis of the Euclidean space $\mathbb{R}^n$ for $n\in\mathbb{N}$ with inner product $(\cdot\,,\cdot)$, and the $\ell$-dimensional space $V$ will be $\mathbb{R}^\ell$ or a subspace of $\mathbb{R}^n$ for a certain $n>\ell$ endowed with the induced inner product $(\cdot\,,\cdot)$.
	For convenience of the presentation, we will use $x_i, y_i, z_i, t_\al$ to denote $x^i, y^i, z^i$ and $t^\al$ respectively. We will also use $E$ to denote the Euler vector $\tilde{E}$ of the generalized Frobenius manifolds given in \eqref{zh-05-28a}.
	
	We start with the simplest example.
	\begin{ex}[$(R,\omega)=(A_1, \omega_1)$] Let $R$ be the root system of type $A_1$, and $V$ be the hyperplane of $\mathbb{R}^2$ spanned by 		
		\[\al_1=e_1-e_2.\]
		We take $\omega$ to be the fundamental weight $\omega_1=\frac12\av_1=\frac12\al_1$.
		Let us introduce the coordinate $x_1$ on $V$ by
		\[\mathbf{x}=c\omega_1+x_1 \av_1,\quad \mathbf{x}\in V.\]
		In this case $\kappa = (\omega_1,\alpha_1) = 1$, hence $\lambda = \mathrm{e}^{-2\pi ic}$.	The Weyl group $W(R)$ acting on $V$ is generated by 
		\[\sigma_1(\mathbf{x})=c\omega_1+(-c-x_1)\av_1,\quad x_1\mapsto -c-x_1.\]
		The invariant Fourier polynomial has the form
		\[Y_1 = \mathrm{e}^{2\pi i \left(\omega_1,\bfx\right)}+\mathrm{e}^{2\pi i \left(\omega_1,\sigma_1(\bfx)\right)} = \mathrm{e}^{2\pi i \left(x_1+\frac12 c\right)}+\mathrm{e}^{-2\pi i \left(x_1+\frac12 c\right)},\]
		hence
		\[y_1=\mathrm{e}^{-\pi i c}Y_1=\mathrm{e}^{2\pi i x_1}+\lambda \mathrm{e}^{-2\pi i x_1}.\]
		The metric $(\cdot\,,\cdot)$ on $V$ is given by $(\nd x_1,\nd x_1)=(\av_1,\av_1)^{-1}=\frac12$, hence
		\[g^{11}_\lambda=\frac1{4\pi^2}(\nd y_1,\nd y_1)= \frac1{4\pi^2}\frac{\p y_1}{\p x_1}(\nd x_1,\nd x_1)\frac{\p y_1}{\p x_1}=-\frac12 y_1^2+2\lambda.\]
		Therefore, $y_1$ is a pencil generator, and we obtain a flat pencil
		\[
		\eta^{11}=2,\quad
		g^{11}=-\frac12 y_1^2.
		\]
		The metric $\eta$ has the flat coordinate $t_1=y_1/\sqrt{2}$. In this new coordinate the flat pencil has the form
		\[\eta^{11}=1,\quad g^{11}=-\frac12 t_1^2.\]
		Thus we obtain a generalized Frobenius manifold $\mathcal M(A_1, \omega_1)$ with potential 
		\[F=-\frac1{24} t_1^4.\]
		The Euler vector field and the unit vector field are given by
		\[E=\frac12 t_1\frac{\p}{\p t_1},\quad
		e=-\frac1{t_1}\frac{\p}{\p t_1}.\]
		Moreover, the flat coordinate of $g$ is given by
		\[
		x^1\big|_{\lambda=0} = \frac 1{2\pi i}\log y_1 = \frac 1{2\pi i}\log (\sqrt 2\,t_1).
		\]
		It follows that only translations $x_1 \to x_1 + n$, $n \in \mathbb Z$ preserve $y_1$, and equivalently $t_1$, hence the monodromy groups of $\mathcal M_D$ and $\widehat{\mathcal M}_D$ are given by
		\begin{align*}
			\mathrm{Mono}(\mathcal M_D) = \mathrm{Mono}(\widehat{\mathcal M}_D) =\mathbb Z.
		\end{align*}
		Indeed, $\mathrm{Stab}_{W}(\omega_1)$ is trivial and we have $\mathrm{Mono}(\mathcal M(A_1,\omega_1)) = \mathrm{Stab}_{W}(\omega_1)\ltimes\mathbb Z$.
	\end{ex}
	
	We have applied the above-mentioned construction of generalized Frobenius manifolds to different types of affine Weyl groups with appropriately chosen weights.
	In what follows we will give some representative examples, which are chosen based on the following points:
	\begin{enumerate}
		\item[1.] In Example \ref{A2,omega2}, the basic generators $y_1,y_2$ are pencil generators, and
		\begin{align*}
			\mathrm{Mono}(\widehat{\mathcal M}_D) =\mathrm{Mono}(\mathcal M_D) ~\cong~\mathrm{Stab}_{W}(\omega) \ltimes \mathbb Z^\ell .
		\end{align*}
		\item[2.] In Example \ref{C3,omega3}, the basic generators $y_1,y_2,y_3$ are also pencil generators, while
		$\mathrm{Mono}(\widehat{\mathcal M}_D)$ is a proper subgroup of $\mathrm{Stab}_{W}(\omega) \ltimes \mathbb Z^\ell $.
		\item[3.] In Example \ref{Be,omega1}, the basic generators $y_1,y_2,y_3$ are not pencil generators, and we need to construct pencil generators by simple translations on $y_1, y_2$ and $y_3$, i.e.,
		\begin{align*}
			z_1= y_1 + a_1\lambda,\quad z_2=y_2 + a_2\lambda,\quad z_3 = y_3.
		\end{align*}
		Here $a_1, a_2$ are some constants, and there are three possible values for them.
		\item[4.] In Example \ref{G2, omega2}, $y_1,y_2$ are not pencil generators. The pencil generators are constructed in a more complicated way as follows:
		\[
		z_1=y_1-6\lambda,\quad
		z_2=y_2-3\lambda y_1+12\lambda^2.
		\]
		\item[5.] In Example \ref{A3,omega1+omega3}, we select the weight $\omega=\omega_1 + \omega_3$ instead of a fundamental weight. This complicates the construction of pencil generators.
		\item[6.] In Example \ref{(D4,omega2)}, we present a case with much more complicated construction of pencil generators. Compared with the former ones, the resulting prepotential in this case is not a Laurant polynomial in flat coordinates.
	\end{enumerate}
	More examples will be presented in our subsequent publications \cite{JLTZ-2}.
	\begin{ex}[$(R,\omega)=(A_2, \omega_2)$]\label{A2,omega2}
		We take the root system $R$ of type $A_2$. Let $V$ be the hyperplane of $\mathbb{R}^3$ spanned by 
		\[\al_1=e_1-e_2,\quad \al_2=e_2-e_3.\]
		We have the coroots and fundamental weights
		\[\av_1=\al_1,\quad \av_2=\al_2,\quad
		\omega_1=\frac23\av_1+\frac13\av_2,\quad \omega_2=\frac13\av_1+\frac23\av_2.\]
		Choose $\omega=\omega_2$, and
		introduce the coordinates $x_1, x_2$ of $V$ by
		\[\mathbf{x}=c\omega_2+x_1 \av_1+x_2 \av_2,\quad \mathbf{x}\in V.\]
		We have the invariant Fourier polynomials
		\begin{align*}
			Y_1&=\mathrm{e}^{2\pi i \left(x_1+\frac13 c\right)}+\mathrm{e}^{2\pi i \left(x_2-x_1+\frac13 c\right)}+ \mathrm{e}^{-2\pi i \left(x_2+\frac23 c\right)},\\
			Y_2&=\mathrm{e}^{2\pi i \left(x_2+\frac23c\right)}+ \mathrm{e}^{-2\pi i \left(x_1+\frac13c\right)}+\mathrm{e}^{2\pi i \left(x_1-x_2-\frac13c\right)}.
		\end{align*}
		Since $\kappa = 1$, we have $\lambda = \mathrm{e}^{-2\pi i c}$ and the invariant $\lambda$-Fourier polynomials
		\begin{align*}
			y_1&=\lambda^{\frac13} Y_1=\mathrm{e}^{2\pi i x_1}+\mathrm{e}^{2\pi i (x_2-x_1)}+\lambda \mathrm{e}^{-2\pi i x_2},\\
			y_2&=\lambda^{\frac23} Y_2=\mathrm{e}^{2\pi i x_2}+\lambda \mathrm{e}^{-2\pi i x_1}+\lambda \mathrm{e}^{2\pi i (x_1-x_2)}.
		\end{align*}
		The metric on $V$ is given by
		\[\left((\av_i, \av_j)\right)=\begin{pmatrix}2 &-1\\ -1 &2\end{pmatrix},\quad
		(\nd x_i,\nd x_j)={\begin{pmatrix}2 &-1\\ -1 &2\end{pmatrix}}^{-1}=\frac13\begin{pmatrix}2 &1\\ 1 &2\end{pmatrix}.\]
		In the coordinates $y_1, y_2$ we have
		\[\left(g_\lambda^{ij}\right)=\frac1{4\pi^2}\left((\nd y_i, \nd y_j)\right)=\frac1{4\pi^2}\sum_{k,r=1}^2 \frac{\p y_i}{\p x_k}\frac{\p y_j}{\p x_r}(\nd x_k,\nd x_r)=\begin{pmatrix}-\frac23 y_1^2+2 y_2 & 3\lambda -\frac13 y_1 y_2\\[3pt] 3\lambda -\frac13 y_1 y_2& 2 \lambda y_1-\frac23 y_2^2\end{pmatrix}.\]
		Then $\{y_1, y_2\}$ is a set of pencil generators, and the corresponding flat pencil is
		given by
		\[
		\left(\eta^{ij}\right)=\begin{pmatrix}0 &3\\ 3&2 y_1\end{pmatrix},\quad
		\left(g^{ij}\right)=\begin{pmatrix}-\frac23 y_1^2+2 y_2 & -\frac13 y_1 y_2\\[3pt] -\frac13 y_1 y_2& -\frac23 y_2^2\end{pmatrix}.
		\]
		
		The metric $\eta$ has flat coordinates
		\[t_1=\frac 13 y_1,\quad t_2=y_2-\frac16 y_1^2,\quad \]
		in which the metrics $\eta$ and $g$ have the form
		\[
		\left(\eta^{ij}\right)=\begin{pmatrix}0 &1\\ 1&0\end{pmatrix},\quad
		\left(g^{ij}\right)=\begin{pmatrix} \frac19\left(2 t_2-3 t_1^2\right)& -t_1 t_2+\frac 12 t_1^3\\[4pt]-t_1 t_2+\frac 12 t_1^3&-\frac16\left(2t_2-3 t_1^2\right)^2\end{pmatrix}.
		\]
		Thus we obtain a generalized Frobenius manifold $\mathcal M(A_2,\omega_2)$ with potential
		\begin{align}
			F=\frac1{18} t_2^3-\frac14 t_1^2 t_2^2+\frac18  t_1^4t_2-\frac3{80} t_1^6,
		\end{align}
		the Euler vector field 
		\[E=\frac13 t_1\frac{\p}{\p t_1}+\frac23 t_2\frac{\p}{\p t_2},\]
		and the unit vector field
		\[e=-\frac{2}{2 t_2+3 t_1^2}\frac{\p}{\p t_1}-\frac{6 t_1}{2 t_2+3 t_1^2}\frac{\p}{\p t_2}.\]
		We also have $e = -\mathrm{grad}_\eta \log y_2$. Note that in this example, not only are $y_1, y_2$ polynomials of $t_1$ and $t_2$, but $t_1$ and $t_2$ are also polynomials of $y_1$ and $y_2$. This implies that the monodromy between $t_1$, $t_2$ and $y_1$, $y_2$ are trivial, i.e., $\widehat{\mathcal M}_D = \mathcal M_D$. Therefore,
		\begin{align*}
			\mathrm{Mono}(\widehat{\mathcal M}_D) = \mathrm{Mono}(\mathcal M_D)=\mathrm{Stab}_{W}(\omega_2)\ltimes \mathbb Z^2 =\langle\sigma_1\rangle \ltimes \mathbb Z^2\cong \mathbb Z_2\ltimes\mathbb Z^2.
		\end{align*}
		More explicitly, $\mathrm{Mono}(\mathcal M(A_2, \omega_2))$ is generated by
		\begin{align*}
			\sigma_1\colon &x_1 \mapsto x_2 - x_1,\quad x_2 \mapsto x_2,\\
			T_1\colon&x_1 \mapsto x_1 + 1,\quad x_2 \mapsto x_2,\\
			T_2\colon&x_1 \mapsto x_1,\quad x_2 \mapsto x_2+1.
		\end{align*}
	\end{ex}
	
	\begin{ex}[$(R,\omega)=(C_3, \omega_3)$]
		\label{C3,omega3}
		Let $R$ be the root system of type $C_3$, and $V=\mathbb{R}^3$. We take the simple roots
		\[\al_1=e_1-e_2, \quad \al_2=e_2-e_3,\quad \al_3=2e_3.\]
		Then we have the coroots
		\[\av_1=e_1-e_2,\quad \av_2=e_2-e_3,\quad \av_3=e_3,\]
		and the fundamental weights
		\[\omega_1=\av_1+\av_2+\av_3,\quad \omega_2=\av_1+2\av_2+2\av_3,\quad \omega_3=\av_1+2\av_2+3\av_3.\]
		Let us take $\omega=\omega_3$, and introduce coordinates $x_1, x_2, x_3$ of $V$ by
		\[\mathbf{x}= c\omega_3+x_1 \av_1+x_2 \av_2+x_3\av_3.\]
		Since $\kappa = 2$, we have $\lambda=\mathrm{e}^{-4\pi i c}$.
		The invariant $\lambda$-Fourier polynomials are given by
		\begin{align*}
			y_1=\lambda^{\frac12}(\xi_1+\xi_2+\xi_3),\quad
			y_2=\lambda(\xi_1\xi_2+\xi_1\xi_3+\xi_2\xi_3),\quad
			y_3=\lambda^{\frac32}\xi_1\xi_2\xi_3,
		\end{align*}
		where 
		\begin{align*}
			\xi_1&=\lambda^{-\frac12}\mathrm{e}^{2\pi i x_1}+\lambda^{\frac12} \mathrm{e}^{-2\pi i x_1},\quad
			\xi_2=\lambda^{-\frac12} \mathrm{e}^{2\pi i (x_2-x_1)}+\lambda^{\frac12} \mathrm{e}^{-2\pi i(x_2-x_1)},\\
			\xi_3&=\lambda^{-\frac12} \mathrm{e}^{2\pi i (x_3-x_2)}+\lambda^{\frac12} \mathrm{e}^{-2\pi i (x_3-x_2)}.
		\end{align*}
		The metric on $V$ has the form 
		$$ 
		\left((\nd x_i,\nd x_j)\right)= \begin{pmatrix}2&-1&0\\-1&2&-1\\ 0&-1&1\end{pmatrix}^{-1}=\begin{pmatrix}
			1&1&1\\ 1&2&2\\
			1&2&3
		\end{pmatrix}.
		$$
		In the coordinates $y_1, y_2, y_3$ we have
		\begin{align*}
			\left(g_\lambda^{ij}\right)&=\frac1{4\pi^2}\left((\nd y_i, \nd y_j)\right)=\frac1{4\pi^2}\sum_{k,r=1}^3 \frac{\p y_i}{\p x_k}\frac{\p y_j}{\p x_r}(\nd x_k,\nd x_r)\\
			&=\begin{pmatrix}
				-y_1^2+2 y_2+12 \lambda& -y_1 y_2+3 y_3+8 \lambda y_1 &-y_1 y_3+4 \lambda y_2\\
				-y_1 y_2+3 y_3+8 \lambda y_1&-2 y_2^2+2 y_1 y_3-8\lambda y_2+8 \lambda y_1^2 &-2y_2 y_3+4\lambda y_1 y_2-12 \lambda y_3\\
				-y_1 y_3+4 \lambda y_2&-2y_2 y_3+4\lambda y_1 y_2-12 \lambda y_3&-3 y_3^2-8 \lambda y_1 y_3+4 \lambda y_2^2
			\end{pmatrix}.
		\end{align*}
		It follows that $y_1,y_2,y_3$ are pencil generators, and the corresponding flat pencil is given by
		\begin{align*}
			\left(\eta^{ij}\right)&=\begin{pmatrix}
				12 & 8 y_1 &4 y_2\\ 8 y_1 &-8 y_2+8 y_1^2 & 4 y_1 y_2-12 y_3\\
				4 y_2 & 4 y_1 y_2-12 y_3 & -8 y_1 y_3+4 y_2^2
			\end{pmatrix},\\
			\left(g^{ij}\right)&=\begin{pmatrix}
				-y_1^2+2 y_2& -y_1 y_2+3 y_3 &-y_1 y_3\\
				-y_1 y_2+3 y_3&-2 y_2^2+2 y_1 y_3 &-2y_2 y_3\\
				-y_1 y_3&-2y_2 y_3&-3 y_3^2
			\end{pmatrix}.
		\end{align*}
		The flat coordinates $t_1, t_2, t_3$ of $\eta$ are given by the polynomials
		\begin{align*}
			y_1=2\sqrt{3}\, t_2,\quad
			y_2=4 t_2^2-4 t_1 t_3,\quad 
			y_3=\frac{8}{3\sqrt{3}}\left(t_1^3+t_2^3+t_3^3-3 t_1 t_2 t_3\right).
		\end{align*}
		In these flat coordinates the metric $\eta$ has the form
		\[\left(\eta^{\alpha\beta}\right)=\begin{pmatrix}0&0&1\\ 0&1&0\\ 1&0&0\end{pmatrix}.\]
		Thus we obtain the a generalized Frobenius manifold $\mathcal M(C_3,\omega_3)$ with potential
		\[F=-\frac1{9} t_1^3 t_2-\frac1{36} t_2^4-\frac13 t_1 t_2^2 t_3-\frac1{6} t_1^2 t_3^2-\frac1{9} t_2 t_3^3.\]
		The Euler vector field and the unit vector field are given by
		\begin{align*}
			E=&\,\frac 12t_1\frac{\p}{\p t_1}+\frac 12t_2\frac{\p}{\p t_2}+\frac 12t_3\frac{\p}{\p t_3},\\
			e=&\,\frac{3}{2\left(t_1^3+t_2^3+t_3^3-3 t_1 t_2 t_3\right)}\\&\, \times\left((t_1 t_2-t_3^2)\frac{\p}{\p t_1}+(t_1 t_3-t_2^2)\frac{\p}{\p t_2}+(t_2 t_3-t_1^2)\frac{\p}{\p t_3}\right).
		\end{align*}
		We also have $e = -\frac 12\mathrm{grad}_\eta \log y_3$. 
		
		Note that, if we take the flat coordinates $v_1, v_2, v_3$ of $\eta$ via a linear transformation
		\begin{align*}
			t_1 &= \frac 1{\sqrt 3}(v_1+\zeta v_2 + \zeta^2 v_3),\\
			t_2 &= \frac 1{\sqrt 3}(v_1 + v_2 + v_3),\\
			t_3 &= \frac 1{\sqrt 3}(v_1+\zeta^2 v_2 + \zeta v_3),
		\end{align*}
		where $\zeta= \frac{-1+\sqrt 3i}{2}$, then in these coordinates the metric $\eta$ has the form
		\[\left(\eta^{\alpha\beta}\right)=\begin{pmatrix}1&0&0\\ 0&1&0\\ 0&0&1\end{pmatrix},\]
		and the metric $g$ has the form
		\[\left(g^{\alpha\beta}\right)=\begin{pmatrix}-v_1^2&0&0\\ 0&-v_2^2&0\\ 0&0&-v_3^2\end{pmatrix}.\]
		The potential of the generalized Frobenius manifold is given by
		\begin{align*}
			F = -\frac 1{12}v_1^4 - \frac 1{12}v_2^4 - \frac 1{12}v_3^4,
		\end{align*}
		the Euler vector field and the unit vector field have the form 
		\[
		E=\frac 12v_1\frac{\p}{\p v_1}+ \frac 12v_2\frac{\p}{\p v_2}+\frac 12v_3\frac{\p}{\p v_3},\quad e = -\frac 1{2v_1}\tangentvector{v_1} - \frac 1{2v_2}\tangentvector{v_2} - \frac 1{2v_3}\tangentvector{v_3}.
		\]
		It is clear that, up to a factor, we have the isomorphism
		\begin{align*}
			\mathcal M(C_3, \omega_3) ~\cong~ \mathcal M(A_1,\omega_1)^{\oplus3}.
		\end{align*}
		Moreover, since the flat coordinates of $g$ has the form
		\begin{align*}
			x_1\big|_{\lambda=0} =& \frac 1{2\pi i}\log (2v_1), \quad x_2\big|_{\lambda=0} = \frac 1{2\pi i}\left(\log (2v_2) +\log (2v_1)\right) ,\\
			x_3\big|_{\lambda=0} =& \frac 1{4\pi i}\left(\log (2v_3)+\log(2v_2)+\log(2v_1)\right),
		\end{align*}
		we know that the monodromy group of $\widehat{\mathcal{M}}_D$ is composed of only those translations on each $x_i$ by integer numbers. Thus we have
		\begin{align}
			\mathrm{Mono}(\widehat{\mathcal M}_D)~\cong~ \{e\} \ltimes \mathbb Z^3 ~<~\mathrm{Stab}_{W}(\omega_3) \ltimes \mathbb Z^3.
		\end{align}
		Here $\mathrm{Stab}_{W}(\omega_3)=\langle\sigma_1, \sigma_2 \rangle \cong S_3$ is not trivial. In fact, in this case we have $\widehat{\mathcal M}_D/S_3 \cong \mathcal M_D$. This example shows that $\mathrm{Mono}(\widehat{\mathcal M}_D)$ is not isomorphic to $\mathrm{Mono}(\mathcal M_D)$ in general.
	\end{ex}
	
	\begin{ex}[$(R,\omega)=(B_3, \omega_1)$]
		\label{Be,omega1}
		Let $R$ be the root system of type $B_3$, and $V=\mathbb{R}^3$. We take the simple roots
		\[\al_1=e_1-e_2, \quad \al_2=e_2-e_3,\quad \al_3=e_3.\]
		Then we have the coroots
		\[\av_1=e_1-e_2,\quad \av_2=e_2-e_3,\quad \av_3=2e_3.\]
		The fundamental weights are given by
		\[\omega_1=\av_1+\av_2+\frac12\av_3,\quad \omega_2=\av_1+2\av_2+\av_3,\quad \omega_3=\frac12\av_1+\av_2+\frac34\av_3.\]
		Let us take $\omega=\omega_1$, and introduce coordinates $x_1, x_2, x_3$ of $V$ by
		\[\mathbf{x}=c\omega_1+x_1 \av_1+x_2 \av_2+x_3\av_3.\]
		Since $\kappa = 1$, we have $\lambda=\mathrm{e}^{-2\pi i c}$.
		The invariant $\lambda$-Fourier polynomials are given by
		\begin{align*}
			y_1=\lambda (\xi_1+\xi_2+\xi_3),\quad
			y_2=\lambda (\xi_1\xi_2+\xi_1\xi_3+\xi_2\xi_3),\quad
			y_3=\lambda^{\frac12}\tilde\xi_1\tilde\xi_2\tilde\xi_3.
		\end{align*}
		Here 
		\begin{align*}
			\xi_1&=\lambda^{-1}\mathrm{e}^{2\pi i x_1}+\lambda \mathrm{e}^{-2\pi i x_1},\quad
			\xi_2=\mathrm{e}^{2\pi i(x_2-x_1)}+\mathrm{e}^{-2\pi i(x_2-x_1)},\\
			\xi_3&=\mathrm{e}^{2\pi i (2x_3-x_2)}+\mathrm{e}^{-2\pi i (2x_3-x_2)},
		\end{align*}
		and 
		\begin{align*}
			\tilde\xi_1&=\lambda^{-\frac12}e^{\pi i x_1}+\lambda^{\frac12} \mathrm{e}^{-\pi i x_1},\quad
			\tilde\xi_2=e^{\pi i(x_2-x_1)}+e^{-\pi i(x_2-x_1)},\\
			\tilde\xi_3&=e^{\pi i (2x_3-x_2)}+e^{-\pi i (2x_3-x_2)}.
		\end{align*}
		The metric on $V$ has the form 
		\[
		\left((dx_i,dx_j)\right)=\begin{pmatrix}2&-1&0\\-1&2&-2\\ 0&-2&4\end{pmatrix}^{-1}=\begin{pmatrix}
			1&1&\frac12\\ 1&2&1\\
			\frac12&1&\frac34
		\end{pmatrix}.
		\]
		In this case, $y_1, y_2, y_3$ are not pencil generators. A set of proper generators should have the form
		\begin{align*}
			z_1=y_1+a_1 \lambda,\quad
			z_2=y_2+a_2 \lambda,\quad
			z_3=y_3,
		\end{align*}
		where $a_1, a_2$ are two constants.
		We have the following three choices for $a_1$ and $a_2$:
		\begin{align}
			&a_1=6, \quad a_2=-12,\label{zh-3}\\
			&a_1=2,\quad a_2=4,\label{zh-4}\\
			&a_1=-2,\quad a_2=4.\label{zh-5}
		\end{align}
		
		We first consider the $a_1=6,\, a_2=-12$ case. In this case we have
		\begin{align*}
			\left(g_\lambda^{ij}\right)=&\,\frac1{4\pi^2} \left((\nd z_i, \nd z_j)\right)=\frac1{4\pi^2} \sum_{k,r=1}^3 \frac{\p z_i}{\p x_k}\frac{\p z_j}{\p x_r}(\nd x_k,\nd x_r)\\
			=&\,\begin{pmatrix}
				-z_1^2
				&-z_1 z_2
				&-\frac{1}{2} z_1 z_3\\
				-z_1 z_2
				&-2 z_2^2-4 z_1 z_2+2 z_1 z_3^2
				&2 z_1 z_3-z_2 z_3\\
				-\frac{1}{2} z_1 z_3
				&2 z_1 z_3-z_2 z_3
				&-\frac{3}{4} z_3^2+4 z_1+z_2
			\end{pmatrix}\\
			&\,+\lambda \begin{pmatrix}
				12 z_1+2 z_2
				&3  z_3^2-16  z_1
				&6  z_3\\
				3  z_3^2-16  z_1
				&-32  z_2-12 z_3^2-64  z_1
				&-24  z_3\\
				6  z_3
				&-24  z_3
				&0
			\end{pmatrix}\\
			=&\,\left(g^{ij}\right)+\lambda\left(\eta^{ij}\right).
		\end{align*}
		The metric $\eta$ has the flat coordinates
		\begin{align*}
			t_1&=-\frac1{1920 z_3^{7/3}}\left(1280 z_1^2+640 z_1 z_2+80 z_2^2-480 z_1 z_3^2+120 z_2 z_3^2-27 z_3^4\right),\\
			t_2&=\frac1{8 z_3} (16 z_1+4 z_2-z_3^2),\\
			t_3&=z_3^{1/3}.
		\end{align*}
		In these coordinates we have
		\[(\eta^{\alpha\beta})=\begin{pmatrix}0&0&1\\ 0&3&0\\ 1&0&0\end{pmatrix}.\]
		We then obtain a generalized Frobenius manifold structure $\mathcal M(B_3,\omega_1)_1$ with potential
		\begin{align*}
			F=&\,\frac{t_1^2 t_2}{3 t_3}-\frac{t_1^2 
				t_3^2}{12}-\frac{2 t_1 t_2^3}{27 t_3^2}-\frac{1}{36} 
			t_1 t_2^2 t_3+\frac{1}{360} t_1 t_2 
			t_3^4-\frac{t_1 t_3^7}{30240}\\
			&+\frac{t_2^5}{135 \
				t_3^3}-\frac{t_2^4}{432}+\frac{t_2^3 
				t_3^3}{2160}-\frac{t_2^2 t_3^6}{8640}+\frac{t_2 
				t_3^9}{172800}-\frac{t_3^{12}}{7603200}.
		\end{align*}
		The Euler vector field and the unit vector field are given by
		\begin{align*}
			E&=\frac56 t_1\frac{\p}{\p t_1}+\frac12 t_2\frac{\p}{\p t_2}+\frac16 t_3\frac{\p}{\p t_3},\\
			e&=-\frac6{D}\left(\left(40 t_1+20 t_2 t_3^2+t_3^5\right)\frac{\p}{\p t_1}+\left(40 t_2+20 t_3^3\right) \frac{\p}{\p t_2}+40 t_3\frac{\p}{\p t_3}\right).
		\end{align*}
		Here 
		$D=40 t_2^2+240 t_1 t_3+40 t_2 t_3^3+t_3^6$.
		We also have $e = -\mathrm{grad}_\eta \log z_1$.
		
		Now we consider the $a_1=2, a_2=4$ case.
		We have
		\begin{align*}
			\left(g_\lambda^{ij}\right)=&\,\frac1{4\pi^2} \left((\nd z_i, \nd z_j)\right)=\frac1{4\pi^2} \sum_{k,r=1}^3 \frac{\p z_i}{\p x_k}\frac{\p z_j}{\p x_r}(\nd x_k,\nd x_r)\\
			=&\,\begin{pmatrix}
				-z_1^2
				&-z_1 z_2
				&-\frac{1}{2} z_1 z_3\\
				-z_1 z_2
				&-2 z_2^2-4 z_1 z_2+2 z_1 z_3^2
				&2 z_1 z_3-z_2 z_3\\
				-\frac{1}{2} z_1 z_3
				&2 z_1 z_3-z_2 z_3
				&-\frac{3}{4} z_3^2+4 z_1+z_2
			\end{pmatrix}\\
			&\,+\lambda \begin{pmatrix}
				4 z_1+2 z_2
				&3  z_3^2-4  z_2
				&4  z_3\\
				3  z_3^2-4  z_2
				&16  z_2-4 z_3^2
				&0\\
				4  z_3
				&0
				&0
			\end{pmatrix}\\
			=&\,\left(g^{ij}\right)+\lambda\left(\eta^{ij}\right).
		\end{align*}
		The metric $\eta$ has the flat coordinates
		\[
		t_1=\frac{48 z_1+12 z_2-5 z_3^2}{48\sqrt{z_3}},\quad
		t_2=\frac14\sqrt{4z_2-z_3^2},\quad
		t_3=\frac12 \sqrt{z_3}\,.
		\]
		In these coordinates we have
		\[\left(\eta^{\alpha\beta}\right)=\begin{pmatrix}0&0&1\\ 0&1&0\\ 1&0&0\end{pmatrix}.\]
		We then obtain a generalized Frobenius manifold structure $\mathcal M(B_3,\omega_1)_2$ with potential
		\begin{align*}
			F=&\,-\frac{t_2^4}{48}+\frac{t_1^3}{24 t_3}-\frac14 t_1 t_2^2 t_3-\frac{t_1^2 t_3^2}{12}+\frac{t_2^2t_3^4}{24}+\frac{t_1t_3^5}{180}-\frac{t_3^8}{2268}.
		\end{align*}
		The Euler vector field and the unit vector field are given by
		\begin{align*}
			E&=\frac34 t_1\frac{\p}{\p t_1}+\frac12 t_2\frac{\p}{\p t_2}+\frac14 t_3\frac{\p}{\p t_3},\\
			e&=\frac1{3 t_2^2-6 t_1 t_3-2 t_3^4}\left(\left(6 t_1+8t_3^3\right)\frac{\p}{\p t_1}-6 t_2\frac{\p}{\p t_2}+6 t_3\frac{\p}{\p t_3}\right).
		\end{align*}
		We also have $e = -\mathrm{grad}_\eta \log z_1$.
		
		Finally, let us consider the $a_1=-2, a_2=4$ case. We have
		\begin{align*}
			\left(g_\lambda^{ij}\right)=&\,\frac1{4\pi^2} \left((\nd z_i, \nd z_j)\right)=\frac1{4\pi^2} \sum_{k,r=1}^3 \frac{\p z_i}{\p x_k}\frac{\p z_j}{\p x_r}(\nd x_k,\nd x_r)\\
			=&\,\begin{pmatrix}
				-z_1^2
				&-z_1 z_2
				&-\frac{1}{2} z_1 z_3\\
				-z_1 z_2
				&-2 z_2^2-4 z_1 z_2+2 z_1 z_3^2
				&2 z_1 z_3-z_2 z_3\\
				-\frac{1}{2} z_1 z_3
				&2 z_1 z_3-z_2 z_3
				&-\frac{3}{4} z_3^2+4 z_1+z_2
			\end{pmatrix}\\
			&\,+\lambda \begin{pmatrix}
				-4 z_1+2 z_2
				&3  z_3^2-8  z_2
				&2  z_3\\
				3  z_3^2-8  z_2
				&4 z_3^2
				&8 z_3\\
				2z_3
				&8 z_3
				&16
			\end{pmatrix}\\
			=&\,\left(g^{ij}\right)+\lambda\left(\eta^{ij}\right).
		\end{align*}
		The metric $\eta$ has the flat coordinates
		\[
		t_1=-\frac{48 z_1+8 z_2-5 z_3^2}{48\left(4z_2-z_3^2\right)^{1/4}},\quad
		t_2=\frac14 z_3,\quad
		t_3=\frac12 \left(4z_2-z_3^2\right)^{1/4}.
		\]
		In these coordinates we have
		\[\left(\eta^{\alpha\beta}\right)=\begin{pmatrix}0&0&1\\ 0&1&0\\ 1&0&0\end{pmatrix},\]
		and we obtain the same potential as the one given in the previous case with $a_1=2, a_2=4$, hence the third generalized Frobenius manifold structure $\mathcal M(B_3, \omega_1)_3$ is isomorphic to the second one.
	\end{ex}
	
	\begin{ex}[$(R,\omega)=(G_2, \omega_2)$]
		\label{G2, omega2}
		Let $R$ be the root system of type $G_2$, and $V$ be the subspace of $\mathbb{R}^3$ spanned by the simple roots
		\[\al_1=e_1-e_2, \quad \al_2=-2e_1+e_2+e_3.\]
		We have the coroots
		\[\av_1=\al_1,\quad \av_2=\frac13\al_2.\]
		The fundamental weights are given by
		\[
		\omega_1=2\av_1+3\av_2,\quad \omega_2=3\av_1+6\av_2.
		\]
		Let us take $\omega=\omega_2$ and introduce coordinates $x_1, x_2$ of $V$ by
		\[
		\mathbf{x}=c\,\omega_2+x_1 \av_1+x_2 \av_2=(x_1+3c) \av_1+(x_2+6c) \av_2,
		\]
		Since $\kappa = 3$, we have $\lambda=\mathrm{e}^{-6\pi i c}$.
		The invariant $\lambda$-Fourier polynomials are given by
		\begin{align*}
			y_1=& \,\mathrm{e}^{2\pi i x_1}+\lm^2\mathrm{e}^{-2\pi i x_1}+\lm \mathrm{e}^{2\pi i (2x_1-x_2)}+\lm \mathrm{e}^{-2\pi i (2x_1-x_2)}+\lm^2 \mathrm{e}^{2\pi i (x_1-x_2)}+\mathrm{e}^{-2\pi i (x_1-x_2)},\\
			y_2=&\,\mathrm{e}^{2\pi i x_2}+\lm^4 \mathrm{e}^{-2\pi i x_2}+\lm^3 \mathrm{e}^{2\pi i (3x_1-2x_2)}+\lm \mathrm{e}^{-2\pi i (3x_1-2x_2)}+\lm^3 \mathrm{e}^{2\pi i (x_2-3x_1)}+\lm \mathrm{e}^{-2\pi i (x_2-3x_1)}.
		\end{align*}
		The metric on $V$ has the form 
		$$ 
		\left((\nd x_i,\nd x_j)\right)= \begin{pmatrix}2&-1\\-1&\frac23\end{pmatrix}^{-1}= \begin{pmatrix}
			2&3\\[3pt]
			3&6
		\end{pmatrix}.
		$$
		In the $y_1, y_2$ coordinates the metric
		does not depend at most linearly on $\lambda$, hence $y_1$ and $y_2$ are not pencil generators. Proper generators $z_1$ and $z_2$ can be introduced by
		\[z_1=y_1-6\lambda,\quad
		z_2=y_2-3\lambda y_1+12\lambda^2.\]
		Then in the $z_1, z_2$ coordinates we have
		\[\left(g^{\al\beta}_\lambda\right)=\left(g^{\al\beta}\right)+\lambda\left(\eta^{\al\beta}\right)
		=\begin{pmatrix}
			-2 z_1^2+2 z_2 &-3 z_1 z_2 \\ -3 z_1 z_2 &-6 z_2^2
		\end{pmatrix}
		+\lambda\begin{pmatrix}
			-12 z_1 &3 z_1^2-36 z_2 \\ 3 z_1^2-36 z_2 &6 z_1^3-36 z_1 z_2
		\end{pmatrix}.\]
		The metric $\eta$ has flat coordinates
		\[t_1=\frac{z_1}{2\left(4 z_2-z_1^2\right)^{\frac16}},\quad
		t_2=-\frac13\left(4 z_2-z_1^2\right)^{\frac16},\]
		in these coordinates the metric $\eta$ has the form
		\[\left(\eta^{\al\beta}\right)=\begin{pmatrix}
			0&1\\ 1&0
		\end{pmatrix}.\]
		We thus obtain a generalized Frobenius manifold with potential
		\[F=\frac{t_1^4}{2916 t_2^2}-\frac18 t_1^2 t_2^2+\frac{81 t_2^6}{320}.\]
		The Euler vector field and the unit vector field are given by
		\begin{align*}
			E&=\frac23 t_1\frac{\p}{\p t_1}+\frac13 t_2\frac{\p}{\p t_1},\\
			e&=-\frac2{3 t_2\left(4 t_1^2+81 t_2^4\right)}\left((4 t_1^2+243 t_2^4)\frac{\p}{\p t_1}+4 t_1 t_2\frac{\p}{\p t_2}\right).
		\end{align*}
		We also have $e = -\frac 13\mathrm{grad}_\eta \log z_2$.
	\end{ex}
	
	\begin{ex}[$(R,\omega)=(A_3,\omega_1+\omega_3)$]
		\label{A3,omega1+omega3}
		Let $R$ be the root system of type $A_3$, $V$ be the subspace of $\mathbb{R}^4$ spanned by the simple roots
		\[\al_1=e_1-e_2,\quad \al_2=e_2-e_3,\quad \al_3=e_3-e_4.\] 
		We have $\av_i=\al_i, ~i=1,2,3$, and the fundamental weights are given by
		\[\omega_1=\frac34\av_1+\frac12\av_2+\frac14\av_3,\quad 
		\omega_2=\frac12\av_1+\av_2+\frac12\av_3,\quad
		\omega_3=\frac14\av_1+\frac12\av_2+\frac34\av_3.\]
		Let us take $\omega=\omega_1+\omega_3$, and introduce coordinates $x_1, x_2, x_3$ of $V$ by
		\[\mathbf{x}=c (\omega_1+\omega_3)+x_1 \av_1+x_2 \av_2+x_3\av_3,\quad \mathbf{x}\in V.\]
		Since $\kappa = 1$, we have $\lambda = \mathrm{e}^{-2\pi i c}$.
		We have the invariant $\lambda$-Fourier polynomials
		\begin{align*}
			y_1=&\,\mathrm{e}^{2\pi i x_1}+\lambda \mathrm{e}^{-2\pi i (x_1-x_2)}+\lambda^2 \mathrm{e}^{-2\pi i x_3}+\lambda \mathrm{e}^{-2\pi i (x_2-x_3)},\\
			y_2=&\,\mathrm{e}^{2\pi i x_2}+\lambda^2 \mathrm{e}^{-2\pi i x_2}+\lambda \mathrm{e}^{2\pi i (x_1-x_3)}+\lambda^2 \mathrm{e}^{-2\pi i (x_1-x_2+x_3)}\\
			&\,+\lambda \mathrm{e}^{-2\pi i (x_1-x_3)}+\mathrm{e}^{2\pi i (x_1-x_2+x_3)},\\
			y_3=&\,\mathrm{e}^{2\pi i x_3}+\lambda^2 \mathrm{e}^{-2\pi i x_1}+\lambda \mathrm{e}^{2\pi i (x_1-x_2)}+\lambda \mathrm{e}^{2\pi i (x_2-x_3)}.
		\end{align*}
		The metric on $V$ is given by
		\[\left((\av_i, \av_j)\right)=\begin{pmatrix}2 &-1&0\\ -1 &2&-1\\0&-1&2\end{pmatrix},\quad
		\left(\nd x_i,\nd x_j\right)={\left((\av_i, \av_j)\right)}^{-1}
		=\frac 14\begin{pmatrix}3 &2&1\\ 2 &4&2\\1&2&3\end{pmatrix}.\]
		In the coordinates $y_1, y_2, y_3$ we have
		\begin{align*}
			\left(g_\lambda^{ij}\right)&=\frac1{4\pi^2}\left((\nd y_i, \nd y_j)\right)=\frac1{4\pi^2}\sum_{k,r=1}^3 \frac{\p y_i}{\p x_k}\frac{\p y_j}{\p x_r}(\nd x_k,\nd x_r)\\
			&=\begin{pmatrix}-\frac34 y_1^2+2\lambda y_2 & -\frac12 y_1 y_2+3 \lambda y_3&-\frac14 y_1 y_3+4\lambda^2 \\ -\frac12 y_1 y_2+3\lambda y_3&-y_2^2+2 y_1 y_3+4\lambda^2&-\frac12 y_2y_3+3\lambda y_1\\-\frac14 y_1 y_3+4\lambda^2&-\frac12 y_2y_3+3\lambda y_1&-\frac34 y_3^2+2 \lambda y_2
			\end{pmatrix}.
		\end{align*}
		We introduce the pencil generators
		\[z_1=y_1+4\lambda,\quad
		z_2=y_2-6 \lambda,\quad
		z_3=y_3+4 \lambda.
		\]
		In the new coordinates, we obtain a flat pencil of metrics $\left(g^{ij}\right)+\lambda \left(\eta^{ij}\right)$ with
		\[
		\left(\eta^{ij}\right)
		=\begin{pmatrix} 6 z_1+2 z_2 & -3 z_1+2 z_2+3 z_3 & z_1+z_3\\
			-3 z_1+2 z_2+3 z_3&-8 z_1-12 z_2-8 z_3& 3 z_1+2 z_2-3 z_3\\
			z_1+z_3& 3 z_1+2 z_2-3 z_3 &2 z_2+6 z_3
		\end{pmatrix}.
		\]
		The metric $\eta$ has the flat coordinates $t_1, t_2, t_3$ which are given by
		\begin{align*}
			z_1&=\frac16 t_1^4+ t_1^2 t_2+\frac12 t_2^2+ t_1 t_3,\\
			z_2&=\frac23 t_1^4-t_2^2-2 t_1 t_3,\\
			z_3&=\frac16 t_1^4- t_1^2 t_2+\frac12 t_2^2+ t_1 t_3;
		\end{align*}
		in these coordinates the metric $\eta$ has the form
		\begin{align*}
			\left(\eta^{\alpha\beta}\right)&=\begin{pmatrix}0&0 &1\\ 0&1&0\\ 1&0&0\end{pmatrix}.
		\end{align*}
		Thus we obtain a generalized Frobenius manifold $\mathcal M(A_3, \omega_1 + \omega_3)$ with potential
		\begin{align*}
			F=&-\frac{t_1^8}{18144}+\frac{t_1^5 
				t_3}{720}-\frac{t_2^6}{96 t_1^4}-\frac{t_1^4 
				t_2^2}{288}+\frac{t_2^4 t_3}{16 
				t_1^3}-\frac{t_2^2 t_3^2}{8 t_1^2}\\
			&-\frac{t_1^2 
				t_3^2}{24}-\frac{1}{24} t_1 t_2^2 
			t_3+\frac{t_3^3}{24 t_1}-\frac{t_2^4}{96},
		\end{align*}
		the Euler vector field 
		\[E=\frac14 t_1\frac{\p}{\p t_1}+\frac12 t_2\frac{\p}{\p t_2}+\frac34 t_3\frac{\p}{\p t_3},\]
		and the unit vector field
		\begin{align*}
			e=&\,\frac1{D}\left(\left(-12 t_1^5-72t_1^2 t_3 -36t_1 t_2^2 \right)\frac{\p}{\p t_1}
			+\left(60 t_2 t_1^4-72 t_1 t_2 t_3-36 t_2^3\right)\frac{\p}{\p t_2}\right.\\
			&\,
			\left.+\left(-8 t_1^7-60 t_1^4 t_3+120 t_1^3 t_2^2-72 t_1 t_3^2-36 t_2^2 t_3\right)\frac{\p}{\p t_3}\right),
		\end{align*}
		where
		\[D=\left(t_1^4-6 t_1^2 t_2+6 t_1 t_3+3 t_2^2\right) \left(t_1^4+6
		t_1^2 t_2 +6 t_1 t_3+3 t_2^2\right).\]
		We also have $e = -\mathrm{grad}_\eta(\log z_1 + \log z_3)$.
	\end{ex}
	\begin{ex}[$(R,\omega) = (D_4,\omega_2)$]
		\label{(D4,omega2)}
		Let $R$ be the root system of type $D_4$, and $V=\mathbb{R}^4$. We take the simple roots
		\[
		\alpha_1 = e_1-e_2,\quad \alpha_2 = e_2-e_3,\quad \alpha_3=e_3-e_4,\quad \alpha_4=e_3+e_4.
		\]
		Then we have the coroots $\alpha_i^\vee = \alpha_i,~i=1,2,3$, 
		and fundamental weights
		\begin{align*}
			\omega_1=&~\alpha_1^\vee+\alpha_2^\vee+\frac{1}{2} \alpha_3^\vee+\frac{1}{2}\alpha_4^\vee,\quad \omega_2=\alpha_1^\vee+2 \alpha_2^\vee+\alpha_3^\vee+\alpha_4^\vee,\\
			\omega_3=&~\frac{1}{2}\alpha_1^\vee+ \alpha_2^\vee+ \alpha_3^\vee+\frac{1}{2}\alpha_4^\vee,\quad\omega_4=\frac{1}{2} \alpha_1^\vee+ \alpha_2^\vee+\frac{1}{2}\alpha_3^\vee+ \alpha_4^\vee.
		\end{align*}
		Let us take $\omega=\omega_2$ and introduce coordinates $x_1, x_2$ of $V$ by
		\[
		\mathbf{x}=c\,\omega_2+x_1 \av_1+x_2 \av_2 + x_3 \av_3+x_4 \av_4. 
		\]
		Since $\kappa = 1$, we have $\lambda = \mathrm{e}^{-2\pi i c}$.
		We have the invariant $\lambda$-Fourier polynomials
		\begin{align*}
			y_1 =&~\lm(\xi_1+\xi_2+\xi_3+\xi_4),\\
			y_2 =&~\lm^2(\xi_1 \xi_2+\xi_1 \xi_3+\xi_1 \xi_4+\xi_2 \xi_3+\xi_2 \xi_4+\xi_3 \xi_4),\\
			y_3 =&~\frac \lm2 (\psi_1\psi_2\psi_3\psi_4 - \phi_1\phi_2\phi_3\phi_4),\\
			y_4 =&~\frac \lm2 (\psi_1\psi_2\psi_3\psi_4 + \phi_1\phi_2\phi_3\phi_4).
		\end{align*}
		Here we denote $v_1 = x_1+c$, $v_2 = x_2-x_1+c$, $v_3 = x_4+x_3-x_2$, $v_4 = x_4-x_3$, and
		\begin{align*}
			\xi_k = \mathrm{e}^{2\pi iv_k}+ \mathrm{e}^{-2\pi i v_k},\quad \psi_k = \mathrm{e}^{\pi iv_k}+ \mathrm{e}^{-\pi i v_k},\quad \phi_k = \mathrm{e}^{\pi iv_k}- \mathrm{e}^{-\pi i v_k},\quad k=1,2,3,4.
		\end{align*}
		The metric on $V$ is given by
		\begin{align*}
			\left((\alpha_i^\vee, \alpha_j^\vee)\right) = \left(
			\begin{array}{cccc}
				2 & -1 & 0 & 0 \\
				-1 & 2 & -1 & -1 \\
				0 & -1 & 2 & 0 \\
				0 & -1 & 0 & 2 \\
			\end{array}
			\right),\quad \left(\dif x_i, \dif x_j\right) = \left(
			\begin{array}{cccc}
				1 & 1 & \frac{1}{2} & \frac{1}{2} \\
				1 & 2 & 1 & 1 \\
				\frac{1}{2} & 1 & 1 & \frac{1}{2} \\
				\frac{1}{2} & 1 & \frac{1}{2} & 1 \\
			\end{array}
			\right).
		\end{align*}
		In the coordinates $y_1,\dots,y_4$ we have
		\[
		\left(g_\lambda^{ij}\right) = \frac{1}{4\pi^2}\left(
		\begin{array}{cccc}
			16 \lm^2-y_1^2+2 y_2 & 3 \lm y_3 y_4-y_1 y_2 & 4 \lm y_4-\frac{1}{2} y_1 y_3 & 4 \lm y_3-\frac{1}{2} y_1 y_4 \\
			3 \lm y_3 y_4-y_1 y_2 & g_\lm^{22} & 3 \lm y_1 y_4-y_2 y_3 & 3 \lm y_1 y_3-y_2 y_4 \\
			4 \lm y_4-\frac{1}{2} y_1 y_3 & 3 \lm y_1 y_4-y_2 y_3 & 16 \lm^2-y_3^2+2 y_2 & 4 \lm y_1-\frac{1}{2} y_3 y_4 \\
			4 \lm y_3-\frac{1}{2} y_1 y_4 & 3 \lm y_1 y_3-y_2 y_4 & 4 \lm y_1-\frac{1}{2} y_3 y_4 & 16 \lm^2-y_4^2+2 y_2 \\
		\end{array}
		\right),
		\]
		where $g_\lm^{22} = -64 \lm^4+4 \lm^2 y_1^2+4 \lm^2 y_3^2+4 \lm^2 y_4^2-24 \lm^2 y_2+2 \lm y_1 y_3 y_4-2 y_2^2$. We introduce the pencil generators
		\[
		z_1 = y_1+ 8\lm,\quad z_2 = y_2+24\lm^2 + \lm(2y_1+2y_3-2y_4),\quad z_3 = y_3+8\lm,\quad z_4 = y_4-8\lm.
		\]
		In the new coordinates, we obtain a flat pencil of metrics $(g^{ij}) + \lm(\eta^{ij})$. The flat coordinates $t_\alpha$ of $\eta^{ij}$ are given by polynomials
		\begin{align*}
			z_1 &= \frac{8}{3}t_1^4 + 2 t_1 t_4 + \frac{8}{3}t_2^4 + 2 t_2 t_3, \\
			z_2 &= \frac{4}{9}t_1^8 - \frac{4}{3} t_1^5 t_4 + \frac{56}{9} t_1^4 t_2^4 + \frac{20}{3} t_1^4 t_2 t_3 + t_1^2 t_4^2 + \frac{20}{3} t_1 t_2^4 t_4 + 2 t_1 t_2 t_3 t_4 + \frac{4}{9}t_2^8 + t_2^2 t_3^2 - \frac{4}{3} t_2^5 t_3, \\
			z_3 &= -\frac{4}{3}t_1^4 + 8 t_1^2 t_2^2 + 2 t_1 t_4 + 2 t_2 t_3 - \frac{4}{3}t_2^4, \\
			z_4 &= \frac{4}{3}t_1^4 + 8 t_1^2 t_2^2 - 2 t_1 t_4 + \frac{4}{3}t_2^4 - 2 t_2 t_3.
		\end{align*}
		In these coordinates the metric $\eta$ has the form
		\[
		\left(\eta^{\alpha\beta}\right)=\begin{pmatrix}0&0&0 &1\\ 0&0&1&0\\ 0&1&0&0\\1&0&0&0\end{pmatrix}.
		\]
		Thus we obtain a generalized Frobenius manifold $\mathcal M(A_3, \omega_1+\omega_3)$ with potential
		\begin{align*}
			F &= \frac{1}{t_1^4 - t_2^4} \Bigg(
			-\frac{t_1^{12}}{1134} -\frac{t_1^9 t_4}{180} +\frac{t_1^8 t_2 t_3}{36} -\frac{13 t_1^8 t_2^4}{1134} -\frac{t_1^6 t_4^2}{24}
			+\frac{t_1^5 t_2^4 t_4}{30} -\frac{t_1^5 t_2 t_3 t_4}{12} +\frac{t_1^4 t_3^3}{96 t_2} -\frac{t_1^4 t_2^2 t_3^2}{24} \\
			&\quad -\frac{t_1^4 t_2^5 t_3}{30} +\frac{13 t_1^4 t_2^8}{1134} -\frac{t_1^3 t_4^3}{96} +\frac{t_1^2 t_2 t_3 t_4^2}{16} +\frac{t_1^2 t_2^4 t_4^2}{24} +\frac{t_1 t_2^5 t_3 t_4}{12}
			-\frac{t_1 t_2^2 t_3^2 t_4}{16} -\frac{t_1 t_2^8 t_4}{36} \\
			&\quad +\frac{t_2^3 t_3^3}{96} +\frac{t_2^6 t_3^2}{24} +\frac{t_2^9 t_3}{180} +\frac{t_2^{12}}{1134} -\frac{t_2^4 t_4^3}{96 t_1}\Bigg).
		\end{align*}
		the Euler vector field
		\[
		E = \frac 14t_1\tangentvector{t_1} + \frac 14t_2\tangentvector{t_2} + \frac 34t_3\tangentvector{t_3} + \frac 34t_4\tangentvector{t_4},
		\]
		and the unit vector field
		\begin{align*}
			e = \frac 1{D}&\left(6t_1 \left(2 t_1^4-3 t_4 t_1-10 t_2^4-3 t_2 t_3\right)\tangentvector{t_1}+6 t_2 \left(-10 t_1^4-3 t_4 t_1+2 t_2^4-3 t_2 t_3\right)\tangentvector{t_2}\right.\\
			&-2\left(16 t_2^7-30 t_3 t_2^4+112 t_1^4 t_2^3+120 t_1 t_4 t_2^3+9 t_3^2 t_2+30 t_1^4 t_3+9 t_1 t_3 t_4\right)\tangentvector{t_3}\\
			&-2\left.\left(16 t_1^7-30 t_4 t_1^4+112 t_2^4 t_1^3+120 t_2 t_3 t_1^3+9 t_4^2 t_1+30 t_2^4 t_4+9 t_2 t_3 t_4\right)\tangentvector{t_4}\right),
		\end{align*}
		where
		\[
		D=4 t_1^8 + 56 t_1^4 t_2^4 + 4 t_2^8 + 60 t_1^4 t_2 t_3 - 
		12 t_2^5 t_3 + 9 t_2^2 t_3^2 - 12 t_1^5 t_4 + 
		60 t_1 t_2^4 t_4 + 18 t_1 t_2 t_3 t_4 + 9 t_1^2 t_4^2.
		\]
		We also have $e = -\mathrm{grad}_\eta\log z_2$.
	\end{ex}
	
	\section{Conclusions}
	\label{conclusion}
	Given a root system $R$, a weight $\omega$ and a set $\{z^1,\dots,z^\ell\}$ of pencil generators, we present an approach to construct generalized Frobenius manifold structure $\mathcal{M}(R,\omega)$ on the orbit spaces of the associated affine Weyl groups $W_a(R)$, and we show that the monodromy group \( \mathrm{Mono}(\mathcal{M}(R,\omega)) \) of the generalized Frobenius manifold is isomorphic to the subgroup $\mathrm{Stab}_{W}(\omega) \ltimes \mathbb Z^\ell$ of $W_a(R)$.
	
	In the subsequent paper \cite{JLTZ-2}, we will explicitly construct the pencil generators for affine Weyl groups of types $A_\ell $, $B_\ell $, $C_\ell $ and $D_\ell $ together with appropriately chosen weights $\omega$, and compute the corresponding generalized Frobenius manifold structures in detail. For the case $(A_\ell, \omega_\ell)$, the generalized Frobenius manifold structure that we will present in \cite{JLTZ-2} coincides with the one constructed in \cite{Cao}, where this generalized Frobenius manifold structure is constructed in terms of a super-potential given by a Fourier polynomial.  It is shown in \cite{Cao} that the bihamiltonian structure that is given by the associated flat pencil of metrics coincides with the dispersionless limit of the bihamiltonian structure of the q-deformed Gelfand-Dickey hierarchy \cite{Fr, Fr-R}. 
	
	Finally, let us outline some potential directions for further research.
	\begin{enumerate}[i)]
		\item \textbf{General construction of pencil generators}:
		Can pencil generators be constructed for every affine Weyl group 
		$W_a(R)$ and an arbitrarily chosen weight $\omega$?
		
		\item \textbf{Relation to other Frobenius manifold constructions}:  
		Are these generalized Frobenius manifolds related to the Frobenius manifolds constructed in \cite{2d-tft, DZ1998} on the orbit spaces of the Coxeter groups and the extended affine Weyl groups?
		
		\item \textbf{Almost duality}:  
		What is the Dubrovin almost-duality structure \cite{Du-04} for \( \mathcal{M}(R,\omega) \)? 
		\item \textbf{Legendre transformations}:  
		What are the properties of Legendre transformations \cite{Legendre, Legendre2} for these generalized Frobenius manifolds?
		
		\item \textbf{Associated integrable systems}:  
		Which integrable hierarchies already known in the theory of integrable systems are given by the topological deformations of the Principal Hiearchies \cite{ZLQW, gfm} of these generalized Frobenius manifolds?
	\end{enumerate}	
	
	\appendix
	\section{Proof of Lemma \ref{zh-06-02b}}
	\label{degge0}
	We are to prove that
	\begin{align}
		\mathrm{Re}(d_\alpha) > 0,\quad \alpha = 1,\dots,\ell.
	\end{align}
	To this end, we first show a general version of Theorem \ref{ext coord}, the conclusion of which is not based on the assertion of Lemma \ref{zh-06-02b}. Since we do not assume a priori that all degrees $d_\alpha$ have positive real part, we may come across resonance cases when solving the differential equation of quasi-homogeneous periods, \textit{i.e.}, it could happen that
	\[
	d_\beta - d_\alpha = j\kappa,\quad j \in \mathbb Z
	\]
	for some $\alpha$ and $\beta$, so in such a case the quasi-homogeneous periods
	may contain logarithmic terms. In what follows we follow the notations of Sect.\,\ref{Polynomial Property and Monodromy Groups}.
	
	\begin{lem}
		\label{gen ext coord}
		For each quasi-homogeneous flat coordinate $t^\alpha = t^\alpha(z)$ of the metric $\eta$ with degree $d_\alpha$, there is a quasi-homogeneous period $u \in \mathcal S_\Lambda$ such that
		\begin{align}
			u = v^\alpha(z,\rho) = \rho^{d_\alpha/\kappa}h^\alpha(z,\rho),\label{zh-07-08-1}
		\end{align}
		where $h^{\alpha}(z, \rho)$ has the form
		\begin{align}
			\label{rho log}
			h^\alpha(z,\rho) = t^\alpha(z) + \sum_{j=1}^\infty \rho^j\Big(h^\alpha_{j,0}(z) + \log \rho~ h^{\alpha}_{j,1}(z) \Big).
		\end{align}
		\begin{proof}
			To find a solution $u$ of the form \eqref{zh-07-08-1} for the equations \eqref{concrete extended 1} and \eqref{extended flat 2}, we need to solve the following system of equations:
			\begin{align}
				\pdf{\vec{\xi}}{z^i} &= \Gamma_i(\rho)\vec{\xi},\quad i=1,\cdots,\ell,\label{to solve 1}\\
				\kappa\rho \pdf{\vec{\xi}}{\rho} &= (A(\rho) - d_\alpha)\vec{\xi},\label{to solve 2}
			\end{align}
			where		
			\[
			\vec{\xi} = \left(\pdf{h^\alpha}{z^1},\dots,\pdf{h^\alpha}{z^\ell}\right)^T,\quad (\Gamma_i(\rho))_j^k = \Gamma_{ij}^k(\rho).\] 
			We expand $A(\rho)$ into a power series
			\[
			A(\rho) = A + \rho A_1 + \rho^2 A_2 + \dots
			\]
			and expand $\vec{\xi}$ into the form given by \eqref{rho log} as
			\[
			\vec{\xi} = \vec{\xi}_{(0)} + \sum_{j=1}^\infty \rho^j\Big(\vec{\xi}_{(j,0)} + \log \rho~ \vec{\xi}_{(j,1)} \Big).
			\]
			Then the equation \eqref{to solve 2} is equivalent to
			\begin{align}
				(A-d_\alpha)\vec{\xi}_{(0)} &= 0,\label{solve-new1}\\
				(A-d_\alpha-j\kappa)\vec{\xi}_{(j,1)} &= -\sum_{r=1}^{j-1}A_r\vec{\xi}_{(j-r,1)},\quad j=1,2,\dots,\label{solve-new2}\\
				(A-d_\alpha - j\kappa)\vec{\xi}_{(j,0)} &= \kappa\vec{\xi}_{(j,1)} - A_j\vec{\xi}_{(0)} - \sum_{r=1}^{j-1}A_r\vec{\xi}_{(j-r,0)},\quad j=1,2,\dots,\label{solve-new3}
			\end{align}
			here \eqref{solve-new1} is automatically satisfied. Note that $A$ is diagonalizable due to the assumption that $\nabla E$ is diagonalizable, hence for each $\zeta \in \mathbb C$, $\ker (A-\zeta) \oplus \mathrm{Im}(A-\zeta) = \mathbb C^\ell$. Therefore, for each $j=1,2,\dots$, whether $d_\alpha + j\kappa$ is an eigenvalue of $A$ or not, we can solve $\vec{\xi}_{(j,0)}$ and $\vec{\xi}_{(j,1)}$ uniquely from the equations \eqref{solve-new2} and \eqref{solve-new3}. Then we obtain a solution $\vec{\xi}$ of \eqref{to solve 2} by induction on $j$.
			Since $\vec{\xi}_{(0)}$ solves \eqref{to solve 1}, and equations \eqref{to solve 1} are compatible with \eqref{to solve 2}, we know that $\vec{\xi}$ also solve \eqref{to solve 1}. The lemma is proved.
		\end{proof}
	\end{lem}
	
	It follows from the above Lemma that for each index $\alpha$, there exist constants $C^{\alpha}_\beta$ and $S^\alpha$, such that
	\begin{align}
		\label{lin comb}
		v^\alpha(z,\rho) = C^\alpha_\beta \check x^\beta(z,\rho) + S^\alpha.
	\end{align}
	Now we focus on the behaviour of the right hand side of \eqref{lin comb} when $\rho \to 0$. Note that the Laurant polynomials $Z_1,\cdots,Z_\ell$ given in \eqref{zr = lambda Zr} induce the canonical projection
	\[
	Z = (Z_1,\cdots,Z_\ell)\colon \mathbb C^\ell \to \mathbb C^\ell/W_a(R),
	\]
	where $\check{x}^1,\dots,\check{x}^\ell$ are coordinates on $\mathbb C^\ell$ and the affine Weyl group $W_a(R)$ acts canonically on $\mathbb C^\ell=\mathbb R^\ell\otimes\mathbb C$. Denote the inverse map of $Z$ by $\check{X}$, then we have
	\[
	\check{x}^\beta(z,\rho) = \check{X}^\beta(\rho^{\theta_1/\kappa}z_1,\cdots,\rho^{\theta_\ell/\kappa}z_\ell).
	\]
	Note that $Z$ is a branched covering map, hence there exists a neighbourhood $U$ of $(Z_1,\dots,Z_\ell) = (0,\dots,0)$ such that its preimage is a disjoint union of some open subsets of $\mathbb C^\ell$, on each one of which merely one point is mapped to $(Z_1,\dots,Z_\ell) = (0,\dots,0)$. 
	Since $(\rho^{\theta_1/\kappa}z_1,\dots,\rho^{\theta_\ell/\kappa}z_\ell)$ lies in $U$ for sufficiently small $\rho$, we know that on each monodromy branch, $\check{x}(z,\rho) = (\check{x}^1(z,\rho),\dots,\check{x}^\ell(z,\rho))$ converges to a constant point when $\rho \to 0$, regardless of the value of $z = (z_1,\dots,z_\ell)$. In particular, this constant point is one of the common roots of $Z_r$ for $r = 1,\cdots,\ell$.
	Therefore, the right hand side of \eqref{lin comb} converges to a constant $V^\alpha$ when $\rho \to 0$ on each monodromy branch. It follows from Lemma \ref{gen ext coord} and \eqref{lin comb} that
	\[
	\rho^{d_\alpha/\kappa}\left(t^\alpha(z) + \sum_{j=1}^\infty \rho^j\Big(h^\alpha_{j,0}(z) + \log \rho~ h^{\alpha}_{j,1}(z) \Big)\right) \to V^\alpha, \quad \textrm{when}\ \rho \to 0.
	\]
	Since $t^\alpha(z)$ is not a constant, we have $V^\alpha = 0$ and $\rho^{d_\alpha/\kappa} \to 0$ when $\rho \to 0$ on each monodromy branch. This implies that $\mathrm{Re}(d_\alpha/\kappa) >0$, hence $\mathrm{Re}(d_\alpha)>0$. The theorem is proved.
	
	\section*{Acknowledgement}
		This work is supported by
		NSFC No.\,12571266.

\end{document}